\def\sqr#1#2{{\vcenter{\vbox{\hrule height.#2pt
              \hbox{\vrule width.#2pt height#1pt \kern#1pt \vrule width.#2pt}
          \hrule height.#2pt}}}}
\def\sqr#1#2{{\vcenter{\vbox{\hrule height.#2pt
              \hbox{\vrule width.#2pt height#1pt \kern#1pt \vrule width.#2pt}
              \hrule height.#2pt}}}}
\def\3n{\negthinspace \negthinspace \negthinspace }
\def\2n{\negthinspace \negthinspace }
\def\1n{\negthinspace }
\def\={\buildrel \triangle \over =}
\def\limsup{\mathop{\overline{\rm lim}}}
\def\esssup{\mathop{\rm esssup}}
\def\max{\mathop{\rm max}}
\def\min{\mathop{\rm min}}
\def\exp{\mathop{\rm exp}}
\def\sup{\mathop{\rm sup}}
\def\inf{\mathop{\rm inf}}
\def\({\Big (}
\def\){\Big )}
\def\[{\Big[}
\def\]{\Big]}
\def\be{\begin{equation}}
\def\ee{\end{equation}}
\def\square#1{\vbox{\hrule\hbox{\vrule height#1%
     \kern#1\vrule}\hrule}}
\def\rectangle#1#2{\vbox{\hrule\hbox{\vrule height#1%
     \kern#2\vrule}\hrule}}
\font\tenbb=msbm10 \font\sevenbb=msbm7 \font\fivebb=msbm5
\theoremstyle{definition}
\newtheorem{lemma}{Lemma}[section]
\newtheorem{remark}[lemma]{Remark}
\newtheorem{example}[lemma]{Example}
\newtheorem{theorem}[lemma]{Theorem}
\newtheorem{definition}[lemma]{Definition}
\newtheorem{proposition}[lemma]{Proposition}
\newtheorem{mylemma}{Lemma}
\begin{document}

\title{General mean-field BSDEs with integrable terminal values \thanks{Juan Li is supported by the NSF of P.R. China (NOs. W2511002, 12031009), the NSF of Shandong Province (NO. ZR2023ZD35), National Key R and D Program of China (NO. 2018YFA0703900), and NSFC-RS (No. 11661130148; NA150344).}}
\author{Weimin Jiang$^{1}$,\,\, Juan Li$^{1,2,}$\footnote{Corresponding authors.},\,\, Yan Shen$^{1,\dag}$\\
{$^1$\small School of Mathematics and Statistics, Shandong University, Weihai, Weihai 264209, P.~R.~China.}\\
{$^2$\small Research Center for Mathematics and Interdisciplinary Sciences, Shandong University,}\\
{\small Qingdao 266237, P.~R.~China.}\\
{\footnotesize{\it E-mails: weiminjiang@mail.sdu.edu.cn,\,\ juanli@sdu.edu.cn,\,\ 202117806@mail.sdu.edu.cn.}}
}

\date{October 11, 2025}
\maketitle

\textbf{Abstract}. This paper investigates $L^{1}$ solutions for mean-field backward stochastic differential equations (MFBSDEs) under different weak assumptions in both one-dimensional and
multi-dimensional settings, whose generator $f(\omega,t,y,z,\mu)$ depends not only on the solution process $(Y,Z)$ but also on the law of $(Y,Z)$.
In the one-dimensional case where $f$ depends on the law of $Y$, we show with the help of a test function method and a localization procedure that such type of equations with an integrable terminal
condition admits an $L^{1}$ solution, when the generator $f(\omega,t,y,z,\mu)$ has a one-sided linear growth in $(y,\mu)$, and an iterated-logarithmically sub-linear growth
in $z$. Furthermore, by leveraging the additional extended monotonicity in $y$ and an iterated-logarithmically uniform continuity in $z$ of the generator
$f(\omega,t,y,z,\mu)$ together with a strengthened nondecreasing condition in $\mu$, we derive a comparison theorem for $L^{1}$ solutions, which immediately leads
to the uniqueness of the $L^{1}$ solutions. Next, we establish the existence and the uniqueness of $L^{1}$ solutions for multi-dimensional mean-field
BSDEs with integrable parameters in which the generator $f(\omega,t,y,z,\mu)$ depends on $\mu=\mathbb{P}_{Y}$ and satisfies a one-sided Osgood condition as well as a general growth condition in $y$, a Lipschitz continuity
as well as a sublinear growth condition in $z$, and a Lipschitz condition in $\mu$. Finally, the solvability of $L^{1}$ solutions for general MFBSDEs is studied, where the generator $f(\omega,t,y,z,\mu)$ depends on both the solution process $(Y,Z)$ and its joint law $\mathbb{P}_{(Y,Z)}$.

\textbf{Keywords}. Mean-field BSDEs; $L^{1}$ solution; iterated-logarithmically sub-linear generator; comparison theorem; one-sided Osgood condition

\textbf{2020 Mathematics Subject Classifications}. 60H10; 60K35
\section{Introduction}
\hspace{2em}
Linear backward stochastic differential equations (BSDEs) were first introduced by Bismut \cite{B73} in 1973 as adjoint equations in the framework of Pontryagin's stochastic maximum
 principle for stochastic control problems. Afterwards, the more general nonlinear BSDEs were introduced by Pardoux and Peng \cite{PP90}, for which they proved the
 existence and the uniqueness of adapted solutions with Lipschitz generators and square integrable terminal values. Their equation takes the form:
\begin{equation}\label{eq 1.1}\tag{1.1}
	Y_{t}=\xi+\int_{t}^{T}f(s,Y_{s},Z_{s})ds-\int_{t}^{T}Z_{s}dW_s,\ t\in[0,T],
\end{equation}
where the terminal value $\xi:\Omega\rightarrow\mathbb{R}^{n}$ is an $\mathcal{F}_{T}$-measurable random variable, and the generator (or driver)
 $f: \Omega \times [0, T] \times \mathbb{R}^{n} \times \mathbb{R}^{n \times d} \rightarrow \mathbb{R}^{n}$ is $\mathbb{F}$-progressively measurable
 for all $(y, z) \in \mathbb{R}^{n} \times \mathbb{R}^{n \times d}$. Since then, the theory of BSDEs has attracted significant interest and found extensive
 applications in diverse fields, including nonlinear PDEs, stochastic control, stochastic differential games, and mathematical finance; see, e.g.,
 El Karoui, Peng and Quenez \cite{EP97}, El Karoui and Hamad\`{e}ne \cite{EH03}, Pardoux and Peng \cite{PP92}, Peng \cite{P92}, etc.

Undoubtedly, the existence and uniqueness of solutions remains a fundamental issue in the BSDE theory and its applications. Numerous researchers have
extended the pioneering work of Pardoux and Peng \cite{PP90}. Several researchers have focused on weakening the Lipschitz assumptions on the generator $f$,
such as Briand, Lelepeltier and San Martin \cite{BR07}, Briand and Confortola \cite{BC08}, Kobylanski \cite{K00}, Lepeltier and San Martin \cite{LS97},
Mao \cite{M95}, among others. Different researchers on their turn have focused on relaxing the $L^{2}$-integrability requirements associated with the terminal
value $\xi$, e.g., Briand et al. \cite{BD03}, Fan \cite{F15}, Fan \cite{F16}, and many others. Since the late 1990s, the investigation of the existence
and the uniqueness of solutions that are bounded, as well as those with certain exponential moment orders, has emerged as a significant area of interest
 within the theory of BSDEs. For a more complete state of art, we refer the reader to Briand and Hu \cite{BH06},
 Delbaen, Hu and Bao \cite{DH11}, Kobylanski \cite{K00}, Lepeltier and San Martin \cite{LS98}, Luo and Fan \cite{LF18}, where the
 generator $f$ is of quadratic or super-quadratic growth with respect to $z$. Notably, existing results indicate that one-dimensional BSDEs are often more easy to handle
 than their multi-dimensional counterparts, largely due to the availability of tools such as the comparison theorem, the monotone convergence theorem, and the
 Girsanov transformation.

On the other hand, in 1997, Peng \cite{P97} introduced the concept of $g$-martingales of a square integrable random variable through the solutions of BSDEs,
which can be interpreted as some type of nonlinear martingales. Since classical martingale theory operates within integrable spaces,
this naturally motivates to solve BSDEs with only integrable data. Such equations with only integrable data are generally more challenging
than those with $L^{p}$-integrable parameters ($p>1$). We refer the reader to Briand et al. \cite{BD03}, and Fan \cite{F16}, \cite{F18}, for a
survey on the well-posedness of $L^{1}$ solutions for BSDEs. Notably, Briand et al. \cite{BD03} were the first to provide general existence
and uniqueness results of $L^{1}$ solutions for multi-dimensional BSDEs with a generator $f$ satisfying the criteria of monotonicity and
general growth with respect to $y$, as well as Lipschitz and sub-linear growth conditions with respect to $z$. Recently, using a test function
approach and a localization technique, Fan, Hu and Tang \cite{FH23} obtained the existence of solutions as well as a comparison theorem for
one-dimensional BSDEs with only integrable terminal values, under the condition that the generator $f$ has a one-sided linear growth in $y$ and
a logarithmic sub-linear growth in $z$. Furthermore, Fan, Hu and Tang \cite{FH24} relaxed the assumption of logarithmic sub-linear growth in $z$,
as initially introduced in their previous work of Fan, Hu and Tang \cite{FH23}, to a novel criterion of iterated-logarithmically sub-linear growth with regard to $z$.

Mean-field models serve as a useful tool to describe the asymptotic behavior in large-scale systems. Mean-field stochastic differential equations (MFSDEs),
or McKean-Vlasov equations, which characterize stochastic systems whose evolution is determined by both the microscopic location and the macroscopic
distribution of the particles, were first discussed by Kac \cite{K56} in the middle of the 1950s in the frame of his study of the Boltzmann equation for the particle
density in diluted monatomic gases and in that of the stochastic toy model for the Vlasov kinetic equation of plasma. Over the past two decades, there has been
extensive research on mean-field games and their closely associated mean-field control problems, which were independently
initiated by Lasry and Lions \cite{LL07} and Huang, Malham\'{e} and Caines \cite{HM06}. Such problems explore the limit dynamics of large-scale systems,
where the agents interact with each other in certain symmetric manner. Inspired by them, Buckdahn, Djehiche,
Li and Peng \cite{BD09} studied a class of special mean-field problems using a purely probabilistic methodology. They introduced the so-called mean-field BSDEs. Buckdahn,
Li and Peng \cite{BL09} studied the existence and the uniqueness of solutions of mean-field BSDEs with assuming that
the generator $f$ satisfies a Lipschitz condition and the terminal value $\xi$ is square integrable. Since then, there has been an intensive research
on mean-field SDEs and BSDEs addressing the existence and the uniqueness as well as related PDEs; see, for instance, Buckdahn, Li, Peng and Rainer \cite{BL17},
 Chen, Xing and Zhang \cite{CX20}, Hao, Hu, Tang and Wen \cite{HH22}, Li \cite{L18}, Li, Liang and Zhang \cite{LL18}, Li and Xing \cite{LX22}, etc.

However, to the best of our knowledge, the well-posedness of mean-field BSDEs with integrable parameters remains relatively unexplored in both one-dimensional and
multi-dimensional settings. Motivated by existing work, we focus on the solvability of $L^{1}$ solutions for mean-field BSDEs in both one-dimensional and
multi-dimensional cases, where the generator $f$ depends on the solution $(Y,Z)$ and the distribution $\mathbb{P}_{Y}$ of its $Y$-component. More precisely, the mean-field BSDEs is as follows:
\begin{equation*}\label{eq 1.2}\tag{1.2}
	Y_{t}=\xi+\int_{t}^{T}f(s,Y_{s},Z_{s},\mathbb{P}_{Y_{s}})ds-\int_{t}^{T}Z_{s}dW_{s},\ t\in[0,T],
\end{equation*}
where the terminal value $\xi:\Omega\rightarrow\mathbb{R}^{n}$ is an integrable $\mathcal{F}_{T}$-measurable random variable, and the
generator $f:=f(\omega,t,y,z,\mu):\Omega\times[0,T]\times\mathbb{R}^{n}\times\mathbb{R}^{n\times d}\times\mathcal{P}_{1}(\mathbb{R}^{n})\rightarrow\mathbb{R}^{n}$ is $\mathbb{F}$-progressively
 measurable, for all fixed $(y,z,\mu)\in\mathbb{R}^{n}\times\mathbb{R}^{n\times d}\times\mathcal{P}_{1}(\mathbb{R}^{n})$. The main objective of our paper consists in
 exploring the existence and the uniqueness of $L^{1}$ solutions to the mean-field BSDE \eqref{eq 1.2} under integrability conditions on the coefficients. We
establish four main results: Firstly, we prove the existence of an $L^{1}$ solution for a scalar-valued mean-field BSDE with an integrable terminal value, and
a generator satisfying a one-sided linear growth condition in $(y,\mu)$, as well as an iterated-logarithmically sub-linear growth condition in $z$. The proof relies on a test function method
and a localization technique. Secondly, using similar techniques, we establish the corresponding comparison theorem for $L^{1}$ solutions, which immediately yields
the uniqueness of the $Y$-component of the solution as a by-product. Thirdly, we investigate the existence and the uniqueness of $L^{1}$ solutions for this type of equations
in the multi-dimensional case, where the generator satisfies a one-sided Osgood condition and a general growth in $y$, a Lipschitz continuity as well as a sub-linear
growth in $z$, and a Lipschitz condition in $\mu$. Finally, we investigate the existence and the uniqueness of $L^{1}$ solutions for a new type of multi-dimensional mean-field BSDEs with integrable parameters, where the generator satisfies a monotonicity condition and a general growth in $y$, a Lipschitz continuity as well as a sub-linear
growth in $z$, and a Lipschitz condition in $\mu$. The equation is of the form:
\begin{equation*}\label{eq 1.3}\tag{1.3}
	Y_{t}=\xi+\int_{t}^{T}f(s,Y_{s},Z_{s},\mathbb{P}_{(Y_{s},Z_{s})})ds-\int_{t}^{T}Z_{s}dW_{s},\ t\in[0,T].
\end{equation*}

The main contributions of the present paper are twofold. Our first contribution consists in a non-trivial generalization of the $L^{1}$ solution results to the mean-field setting (see the Theorems \ref{th 3.6} and \ref{th 3.7}). In particular, we establish the existence of $L^{1}$ solutions and
give a new comparison theorem for such solutions in the one-dimensional mean-field framework via the localization method and new technical arguments. Our second
contribution consists of a non-trivial extension of the $L^{1}$ solvability results for multi-dimensional BSDEs to the mean-field setting
 (see the Theorems \ref{th 4.9}, \ref{th 4.10} and \ref{th 4.11}). With the help of BSDE estimates and Bihari's inequality, we prove the existence and the uniqueness of $L^{1}$ solutions for
 multi-dimensional mean-field BSDEs with integrable parameters under general conditions. This proof is very subtle; it involves namely a Picard iterative scheme and
 a technique for partitioning the time interval $[0,T]$.

The paper is organized as follows: In Section \ref{sec 2} we introduce the notations and provide preparations which are necessary for the subsequent discussions.
In Section \ref{sec 3} we prove the existence of $L^{1}$ solutions and establish a comparison theorem for one-dimensional mean-field BSDEs with integrable coefficients
 by using a test function method and a localization procedure. Finally, in Section \ref{sec 4} we discuss the existence and the uniqueness of $L^{1}$ solutions for
 multi-dimensional mean-field BSDEs with an integrable terminal condition whose generator satisfies a one-sided Osgood condition or a monotonicity condition.

\section{Preliminaries}\label{sec 2}
Throughout this paper, we let $T\in(0,+\infty)$ be a finite time horizon and $n,d\ge1$ be two positive integers. Let $(\Omega,\mathcal{F},\mathbb{P})$ be a complete probability space on which is defined a $d$-dimensional standard Brownian motion $(W_{t})_{t\in[0,T]}$. For $a,b\in\mathbb{R}$, we set $a\vee b:=\max\{a,b\}$, $a\wedge b:=\min\{a,b\}$, $a^{+}:=a\vee 0$ and $a^{-}:=-(a\wedge 0)$. We denote by ${\bm{1}}_{A}(\cdot)$ the indicator function of a set $A$, and sgn$(x):={\bm{1}}_{\{x>0\}}-{\bm{1}}_{\{x\le0\}},\ x\in \mathbb{R}$. For $1\le i\le n$, we denote by $z_{i},\ y_{i}$ and $f^{i}$ respectively the $i$-th row of the matrix $z\in \mathbb{R}^{n\times d}$, the $i$-th component of the vector $y\in \mathbb{R}^{n}$ and the generator $f$. The Euclidean norms of a vector $y\in\mathbb{R}^{n}$ and a matrix $z\in\mathbb{R}^{n\times d}$ are defined by $|y|$ and $|z|$, respectively. Let $\langle x,y\rangle$ represent the inner product between vectors $x,y\in\mathbb{R}^{n}$. The notation $\delta_{0}$ denotes the Dirac measure concentrated at the origin.

For every positive integer $m\ge1$, we recursively define the following functions:
\begin{equation*}
\begin{aligned}
&\ln^{(1)}(x):=\ln x,\ x\ge e^{(1)},\ \text{and}\ \ln^{(m)}(x):=\ln^{(m-1)}(\ln^{(1)}(x))=\ln^{(1)}(\ln^{(m-1)}(x)),\ x\ge e^{(m)},
\end{aligned}
\end{equation*}
where\ $e^{(1)}:=e\ \text{and}\ \ e^{(m)}:=e^{e^{(m-1)}}$.

Furthermore, for every positive integer $m\ge1$ and $\lambda\ge0$, we define the following function:
\begin{equation*}
\mathcal{IL}_{m}^{\lambda}(x):=\prod\limits_{i=1}^{m-1}\big(\ln^{(i)}(e^{(m)}+x)\big)^{\frac{1}{2}}\big(\ln^{(m)}(e^{(m)}+x)\big)^{\lambda},\ x\ge0.
\end{equation*}

\noindent Moreover, we suppose that there exists a sub-$\sigma$-field $\mathcal{F}^0\subset\mathcal{F}$, containing all the $\mathbb{P}$-null subsets of $\mathcal{F}$, such that the following statements hold true:\\
\indent\quad(i)\ \ the  Brownian motion $W$ is independent of $\mathcal{F}^0$;

\indent\quad(ii)\ $\mathcal{F}^{0}$ is ``rich enough'', i.e., $\mathcal{P}_{1}(\mathbb{R}^{k})=\{\mathbb{P}_{\xi},\ \xi\in L^{1}_{\mathcal{F}^{0}}(\Omega;\mathbb{R}^{k})\},\ k\geq1$.\\
\noindent Here $\mathbb{P}_\xi := \mathbb{P}\circ [\xi]^{-1}$ denotes the law of the random variable $\xi$ under the probability $\mathbb{P}$. We denote by $\mathbb{F}=(\mathcal{F}_{t})_{t\in[0,T]}$ the filtration generated by $W$ and augmented by $\mathcal{F}^{0}$, that is, $\mathcal{F}_t:=\sigma\{W_s,\ 0\leq s\leq t\}\vee\mathcal{F}^{0},\ t\in[0,T]$.

\indent The following spaces will be frequently used in what follows: For all $t\in[0,T]$ and $p>0$,\\
\noindent $\bullet\ L^{p}_{\mathcal{F}_{t}}(\Omega;\mathbb{R}^{n}):=\big\{\xi:\Omega\rightarrow \mathbb{R}^{n}|\ \xi\ \text{is an}\ \mathcal{F}_{t}\text{-measurable r.v. with}\  \Vert\xi\Vert_{L^{p}(\Omega)}:=\big(\mathbb{E}\big[|\xi|^{p}\big]\big)^{\frac{1}{p}\wedge 1}<+\infty\big\}$.

\noindent $\bullet\ L^{\infty}_{\mathcal{F}_{t}}(\Omega;\mathbb{R}^{n}):=\big\{\xi:\Omega\rightarrow \mathbb{R}^{n}|\ \xi\ \text{is an}\ \mathcal{F}_{t}\text{-measurable r.v. with}\  \Vert\xi\Vert_{\infty}:=\esssup\limits_{\omega\in \Omega}|\xi(\omega)|<+\infty\}$.

\noindent $\bullet\ \mathcal{S}^{p}_{\mathbb{F}}(t,T;\mathbb{R}^{n}):=\big\{\varphi:\Omega\times [t,T]\rightarrow \mathbb{R}^{n}|\ \varphi\ \text{is an}\ \mathbb{F}\text{-adapted}\ \text{continuous process}\ \text{with}\ \Vert\varphi\Vert_{\mathcal{S}^{p}_{\mathbb{F}}(t,T)}:=\big(\mathbb{E}\big[\sup_{s\in[t,T]}|\varphi_{s}|^{p}\big]\big)^{\frac{1}{p}\wedge1}<+\infty\big\}$.

\noindent $\bullet\ \mathcal{S}^{\infty}_{\mathbb{F}}(t,T;\mathbb{R}^{n}):=\big\{\varphi:\Omega\times [t,T]\rightarrow \mathbb{R}^{n}|\ \varphi\ \text{is an}\ \mathbb{F}\text{-adapted}\ \text{continuous process}\ \text{with}\ \Vert\varphi\Vert_{\mathcal{S}^{\infty}_{\mathbb{F}}(t,T)}:=\mathop{\rm esssup}\limits_{(s,\omega)\in [t,T]\times \Omega}|\varphi_{s}(\omega)|<+\infty\}$.

\noindent $\bullet\ \mathcal{L}^{p}_{\mathbb{F}}(t,T;\mathbb{R}^{n}):=\big\{\psi:\Omega\times [t,T]\rightarrow \mathbb{R}^{n}|\psi\ \text{is an}\ \mathbb{F}\text{-progressively measurable process with}\  \Vert\psi\Vert_{\mathcal{L}^{p}_{\mathbb{F}}(t,T)}:=\big(\mathbb{E}\big[\big(\int_{t}^{T}|\psi_{s}|ds\big)^{p}\big]\big)^{\frac{1}{p}\wedge1}<+\infty\big\}$.

\noindent $\bullet\ \mathcal{H}^{p}_{\mathbb{F}}(t,T;\mathbb{R}^{n}):=\big\{\psi:\Omega\times [t,T]\rightarrow \mathbb{R}^{n}|\psi\ \text{is an}\ \mathbb{F}\text{-progressively measurable process with}\  \Vert\psi\Vert_{\mathcal{H}^{p}_{\mathbb{F}}(t,T)}:=\big(\mathbb{E}\big[\big(\int_{t}^{T}|\psi_{s}|^{2}ds\big)^{\frac{p}{2}}\big]\big)^{\frac{1}{p}\wedge1}<+\infty\big\}$.

It is well-known that for all $p\ge1$, the $\mathcal{S}^{p}_{\mathbb{F}}(t,T;\mathbb{R}^{n})$ and $\mathcal{H}^{p}_{\mathbb{F}}(t,T;\mathbb{R}^{n})$ are Banach spaces, equipped with the norms $\Vert\cdot\Vert_{\mathcal{S}^{p}_{\mathbb{F}}(t,T)}$ and $\Vert\cdot\Vert_{\mathcal{H}^{p}_{\mathbb{F}}(t,T)}$, respectively. And, for all $p\in(0,1)$, the $\mathcal{S}^{p}_{\mathbb{F}}(t,T;\mathbb{R}^{n})$ and $\mathcal{H}^{p}_{\mathbb{F}}(t,T;\mathbb{R}^{n})$ are complete metric spaces with the resulting distances $(Y,\widetilde{Y})\rightarrow \Vert Y-\widetilde{Y}\Vert_{\mathcal{S}^{p}_{\mathbb{F}}(t,T)}$ and $(Z,\widetilde{Z})\rightarrow \Vert Z-\widetilde{Z}\Vert_{\mathcal{H}^{p}_{\mathbb{F}}(t,T)}$, respectively. Let us recall that a real-valued continuous process $Y=(Y_{t})_{t\in[0,T]}$ is said to belong to the class (D), if the set $\{Y_{\tau}:\tau\in\mathscr{T}[0,T]\}$ is a family of uniformly integrable random variables, where $\mathscr{T}[0,T]$ denotes the collection of all stopping times $\tau$ with values in $[0,T]$. Moreover, $\mathbb{E}_{t}[\cdot]$ denotes the conditional expectation knowing the $\sigma$-field $\mathcal{F}_{t}$, for $t\in[0,T]$.

For any $p\ge1$, we denote by $\mathcal{P}_{p}(\mathbb{R}^k)$ the totality of probability measures $\mu$ on $(\mathbb{R}^k,\mathcal{B}(\mathbb{R}^k))$ with finite $p$-th order moment, namely $\Vert\mu\Vert_{p}:=\big(\int_{\mathbb{R}^{k}}|x|^{p}\mu(dx)\big)^{\frac{1}{p}}<+\infty$, where $\mathcal{B}(\mathbb{R}^k)$ is the Borel $\sigma$-field over $\mathbb{R}^{k}$. We endow $\mathcal{P}_{p}(\mathbb{R}^k)$ with the topology induced by the $p$-Wasserstein metric: For $\mu,\nu\in\mathcal{P}_{p}(\mathbb{R}^k),$
\begin{equation*} W_{p}(\mu,\nu):=\inf\Big\{\Big(\int_{\mathbb{R}^k\times\mathbb{R}^k}|x-y|^{p}\pi(dxdy)\Big)^{\frac{1}{p}},\ \pi\in\mathcal{P}_{p}(\mathbb{R}^{2k})\ \text{with marginals}\ \mu\ \text{and}\ \nu\Big\}.
\end{equation*}

For our discussion which follows, we introduce the following definitions concerning the solutions of mean-field BSDE \eqref{eq 1.2}.

\begin{definition}\label{de 2.1} A solution of mean-field BSDE \eqref{eq 1.2} is a pair of $\mathbb{F}$-progressively measurable processes $(Y,Z)$ with values in $\mathbb{R}^{n}\times\mathbb{R}^{n\times d}$ such that $\int_{0}^{T}|Z_{s}|^{2}ds<+\infty$, $\int_{0}^{T}|f(s,Y_{s},Z_{s},\mathbb{P}_{Y_{s}})|ds<+\infty$, $\mathbb{P}$-a.s., and equation \eqref{eq 1.2}\ holds true for all $t\in[0,T]$, $\mathbb{P}$-a.s.
\end{definition}

%\begin{definition} Assume that $(Y,Z)$ is a solution of BSDE (\ref{eq 1.1}). It is said to be an $L^{1}$ solution of BSDE (\ref{eq 1.1}), if $(Y,Z)\in \mathcal{S}^{\beta}(0,T;\mathbb{R})\times \mathcal{H}^{\beta}(0,T;\mathbb{R}^{d})$, for all $\beta\in(0,1)$ and $Y$ belongs to class (D).
%\end{definition}

\begin{definition}\label{de 2.2} A pair of processes $(Y,Z)$ is said to be an $L^{1}$ solution of mean-field BSDE \eqref{eq 1.2}, if $Y$ belongs to the class (D), $(Y,Z)\in \mathcal{S}_{\mathbb{F}}^{p}(0,T;\mathbb{R}^{n})\times \mathcal{H}_{\mathbb{F}}^{p}(0,T;\mathbb{R}^{n\times d})$, for all $p\in(0,1)$, and equation \eqref{eq 1.2}\ holds true for all $t\in[0,T]$, $\mathbb{P}$-a.s.
\end{definition}

Hereinafter, we present two technical lemmas which will be frequently used in our subsequent analysis and discussions. Lemma \ref{le 2.3} provides an upper bound sequence for functions with linear growth; the reader can refer to \cite[Lemma 1]{F18} for more details.

\begin{lemma}\label{le 2.3}
Suppose that $u(\cdot):[0,+\infty)\rightarrow[0,+\infty)$ is a function which grows at most linearly, i.e., there is a constant $A>0$ such that\ $u(x)\le A(x+1),\ x\ge0.$ Then for all $m\ge1$, we have $u(x)\le (m+2A)x+u\Big(\frac{2A}{m+2A}\Big),\ x\ge0$.
\end{lemma}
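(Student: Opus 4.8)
The plan is to fix $m\ge 1$, abbreviate $c:=m+2A$ and $x_{0}:=\frac{2A}{m+2A}$, and to prove the inequality by comparing the two affine functions in play: the given growth bound $x\mapsto A(x+1)$ and the target upper bound $x\mapsto cx+u(x_{0})$. Since $u$ is controlled only from above by the former and from below by $0$, I would split the argument at the threshold $x_{0}$, which is exactly the point appearing in the statement. First I would record the elementary inequality
\[
x_{0}=\frac{2A}{m+2A}\ \ge\ \frac{A}{m+A},
\]
which, after cross-multiplying (all quantities being positive), is equivalent to $2A(m+A)\ge A(m+2A)$, i.e. to $m\ge 0$, hence holds for every $m\ge 1$. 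This single observation is what couples the chosen threshold $x_{0}$ to the slope $m+2A$.

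For the regime $x\ge x_{0}$ I would use only the linear growth and the nonnegativity of $u$. Indeed $u(x)\le A(x+1)=Ax+A$, while the step above gives $x\ge x_{0}\ge \frac{A}{m+A}$, i.e. $A\le (m+A)x=(c-A)x$; adding $Ax$ to both sides yields $Ax+A\le cx\le cx+u(x_{0})$ since $u(x_{0})\ge 0$. This settles the large-argument case, and it is driven purely by the linear-growth hypothesis.

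The delicate regime is $0\le x<x_{0}$, and I expect it to be the main obstacle: here the raw bound $A(x+1)$ can exceed $cx+u(x_{0})$, because for $x$ near $0$ one would need $u(x_{0})\ge A$, which linear growth alone does not supply (a piecewise-constant $u$ concentrated at $0$ shows the inequality can genuinely fail otherwise). The clean way to close this gap is to invoke that $u$ is nondecreasing — as is implicit in the way such growth-envelope functions arise — for then $u(x)\le u(x_{0})\le cx+u(x_{0})$ for all $x<x_{0}$. Combining the two regimes gives the claim for every $x\ge 0$. In summary, the large-argument estimate is purely a linear-growth computation anchored on the inequality $x_{0}\ge \frac{A}{m+A}$, whereas the small-argument estimate is where the monotonicity of $u$ does the work; identifying and handling this latter case is the crux of the argument.
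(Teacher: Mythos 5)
Your proof is correct, and it is essentially the standard argument: the paper itself does not prove Lemma \ref{le 2.3} but cites \cite[Lemma 1]{F18}, whose proof is exactly your two-regime split at the threshold $x_{0}=\frac{2A}{m+2A}$, with the large-argument case driven by $x_{0}\ge\frac{A}{m+A}$ (so that $A\le(m+A)x$ and hence $A(x+1)\le(m+2A)x$) and the small-argument case handled by monotonicity. Your one substantive observation is also accurate and worth stressing: as restated in this paper, the lemma omits the hypothesis that $u$ is nondecreasing, and without it the conclusion is genuinely false --- your counterexample works, e.g.\ $u(0)=A$ and $u(x)=0$ for $x>0$ satisfies $u(x)\le A(x+1)$ but gives $u(0)=A>0=(m+2A)\cdot 0+u(x_{0})$ since $x_{0}>0$. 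The monotonicity assumption is present in the cited source and is satisfied in the paper's only application of the lemma (the continuous nondecreasing function $\zeta$ of assumption \textbf{(A6)$_{m}^{\lambda}$}, used at \eqref{eq 3.36} in the proof of Theorem \ref{th 3.7}), so the omission is a slip in the restatement rather than a defect in your argument; you were right to name the small-argument regime as the place where linear growth alone cannot close the gap.
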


The following lemma can be viewed as a backward version of Bihari's inequality; the reader can refer to \cite[Lemma 2]{F18} for more details.

\begin{lemma}\label{le 2.4} (Bihari's inequality) Suppose that $v(\cdot):[0,T]\rightarrow[0,+\infty)$ is a nonnegative function which satisfies that there exists a constant $v_{0}\ge0$ and a continuous nondecreasing function $\kappa:[0,+\infty)\rightarrow[0,+\infty)$ with $\kappa(0)=0,\ \kappa(u)>0,\ u>0$ and $\int_{0^{+}}\frac{du}{\kappa(u)}=+\infty$, such that
\begin{equation*}
v(t)\le v_{0}+\int_{t}^{T}\kappa(v(s))ds,\ t\in[0,T].
\end{equation*}

\noindent Then we have $v(t)\le \Pi^{-1}\big(\Pi(v_{0})+T-t\big),\ t\in[0,T],$ where $\Pi(x):=\int_{1}^{x}\frac{1}{\kappa(u)}du,\ x>0$, is a continuous strictly increasing function with values in $\mathbb{R}$, and $\Pi^{-1}$ represents the inverse function of $\Pi$. Furthermore, provided that $v_{0}=0$, then $v(t)=0$, for all $t\in[0,T]$.
\end{lemma}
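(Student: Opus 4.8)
The plan is to reduce the integral inequality to a differential inequality for an explicit majorant and then integrate it after composing with the primitive $\Pi$. I would set $V(t):=v_0+\int_t^T\kappa(v(s))\,ds$, so that the hypothesis reads $v(t)\le V(t)$ for every $t\in[0,T]$, while $V(T)=v_0$ and $V(t)\ge v_0$ for all $t$ since the integrand is nonnegative. As $\kappa\circ v$ is integrable (which is needed for $V$ to be finite), $V$ is absolutely continuous on $[0,T]$ with $V'(t)=-\kappa(v(t))$ for a.e. $t$.

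I would treat first the case $v_0>0$. Then $V(t)\ge v_0>0$ throughout, so $\kappa(V(t))>0$ and $\Pi$ is continuously differentiable along the range of $V$, with $\Pi'=1/\kappa$. Combining $v(t)\le V(t)$ with the monotonicity of $\kappa$ gives $\kappa(v(t))\le\kappa(V(t))$, whence
\[
\frac{d}{dt}\Pi(V(t))=\frac{V'(t)}{\kappa(V(t))}=-\frac{\kappa(v(t))}{\kappa(V(t))}\ge-1\qquad\text{for a.e. }t.
\]
Integrating over $[t,T]$ yields $\Pi(V(T))-\Pi(V(t))\ge-(T-t)$, that is $\Pi(V(t))\le\Pi(v_0)+(T-t)$. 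Since $\Pi$ is strictly increasing and so is $\Pi^{-1}$, applying $\Pi^{-1}$ and recalling $v(t)\le V(t)$ gives the asserted bound $v(t)\le\Pi^{-1}\big(\Pi(v_0)+(T-t)\big)$.

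For the degenerate case $v_0=0$ I would argue by approximation, and this is exactly where the divergence condition $\int_{0^+}\frac{du}{\kappa(u)}=+\infty$ is indispensable. For arbitrary $\epsilon>0$ the hypothesis also gives $v(t)\le\epsilon+\int_t^T\kappa(v(s))\,ds$, so the case already settled applies with $v_0$ replaced by $\epsilon$, producing $v(t)\le\Pi^{-1}\big(\Pi(\epsilon)+(T-t)\big)$. The divergence condition forces $\Pi(\epsilon)=-\int_\epsilon^1\frac{du}{\kappa(u)}\to-\infty$ as $\epsilon\downarrow0$; since $\Pi$ assumes every real value, $\Pi^{-1}\big(\Pi(\epsilon)+(T-t)\big)\to0$. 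Letting $\epsilon\downarrow0$ then yields $v(t)=0$ for all $t\in[0,T]$.

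The main obstacle I anticipate is precisely this degeneracy at $v_0=0$: the differential-inequality step divides by $\kappa(V(t))$, which vanishes when $V(t)=0$, so it cannot be run directly, and the conclusion $v\equiv0$ genuinely relies on $\int_{0^+}du/\kappa(u)=+\infty$ (without it uniqueness fails, as in the classical Osgood--Bihari theory). A lesser technical point is the justification of the chain rule for $\Pi\circ V$, which is immediate once $V$ is known to be absolutely continuous and bounded away from $0$, but in the borderline case has to be routed through the $\epsilon$-approximation rather than applied directly.
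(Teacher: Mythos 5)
Your proof is correct. The paper itself does not prove this lemma---it refers the reader to \cite[Lemma 2]{F18}---and your argument is the standard one behind that reference: majorize $v$ by the absolutely continuous function $V(t)=v_0+\int_t^T\kappa(v(s))\,ds$, use $v\le V$ and the monotonicity of $\kappa$ to obtain $\frac{d}{dt}\Pi(V(t))\ge-1$ a.e.\ when $v_0>0$, integrate, and settle $v_0=0$ by the $\epsilon$-approximation, which is indeed exactly where the Osgood condition $\int_{0^+}\frac{du}{\kappa(u)}=+\infty$ enters; your handling of the degenerate case (routing around the division by $\kappa(V(t))$) is the right mechanism. One cosmetic remark: since $\kappa$ is only assumed nondecreasing here (no concavity), $\Pi(+\infty)=\int_1^{+\infty}\frac{du}{\kappa(u)}$ may be finite, so the stated bound $v(t)\le\Pi^{-1}\big(\Pi(v_0)+T-t\big)$ must be read as vacuous (with $\Pi^{-1}:=+\infty$) whenever $\Pi(v_0)+T-t$ falls outside the range of $\Pi$; your derivation actually yields the always-valid inequality $\Pi(V(t))\le\Pi(v_0)+T-t$, so this is an issue of interpreting the lemma's statement, not a gap in your argument.
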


\section{One-dimensional case}\label{sec 3}
\hspace{2em}In this section, we investigate the existence and the uniqueness of the $L^{1}$ solution for one-dimensional mean-field BSDE \eqref{eq 1.2} with only integrable parameters ($n=1$). For this end, let the terminal value $\xi:\Omega\rightarrow \mathbb{R}$ be $\mathcal{F}_{T}$-measurable and the mapping $f:=f(\omega,t,y,z,\mu):\Omega\times[0,T]\times\mathbb{R}\times\mathbb{R}^{d}\times\mathcal{P}_{1}(\mathbb{R})\rightarrow \mathbb{R}$ be $\mathbb{F}$-progressively measurable, for all fixed $(y,z,\mu)\in \mathbb{R}\times\mathbb{R}^{d}\times\mathcal{P}_{1}(\mathbb{R})$. Moreover, we make the following assumptions on the generator:

\noindent\textbf{(A1)}\ For $d\mathbb{P}\times dt$-almost all $(\omega,t)\in \Omega\times [0,T]$, $f(\omega,t,\cdot,\cdot,\cdot)$ is continuous, and has a modulus of continuity w.r.t. the law: For all $t\in[0,T],\ y\in\mathbb{R},\ z\in\mathbb{R}^{d},\ \text{and}\ \mu,\ \bar{\mu}\in\mathcal{P}_{1}(\mathbb{R})$,
\begin{equation*}
|f(t,y,z,\mu)-f(t,y,z,\bar{\mu})|\le\rho\big(W_{1}(\mu,\bar{\mu})\big),\ d\mathbb{P}\times dt\text{-}a.e.
\end{equation*}
Here $\rho:[0,+\infty)\rightarrow[0,+\infty)$ is supposed to be nondecreasing, with the property that $\rho(0+)=0$.

\noindent\textbf{(A2)$_{m}^{\lambda}$}\ There exist two constants $K\ge0,\ \gamma>0$ and a nonnegative process $\theta=(\theta_{t})_{t\in[0,T]}\in \mathcal{L}_{\mathbb{F}}^{1}(0,T;\mathbb{R})$, such that, for all $t\in[0,T],\ y\in\mathbb{R}$, $z\in\mathbb{R}^{d}$, and $\mu\in \mathcal{P}_{1}(\mathbb{R}),$
\begin{equation*}
\text{sgn}(y)f(t,y,z,\mu)\le \theta_{t}+K|y|+KW_{1}(\mu,\delta_{0})+\frac{\gamma|z|}{\mathcal{IL}_{m}^{\lambda}(|z|)},\ d\mathbb{P}\times dt\text{-}a.e.
\end{equation*}

\noindent\textbf{(A3)}\ There exist a constant $\gamma_{0}>0$ and a continuous nondecreasing function $\psi:[0,+\infty)\rightarrow [0,+\infty)$, together with a nonnegative process $\theta=(\theta_{t})_{t\in[0,T]}\in \mathcal{L}_{\mathbb{F}}^{1}(0,T;\mathbb{R})$, such that, for all $t\in[0,T],\ y\in\mathbb{R}$, $z\in\mathbb{R}^{d}$, and $\mu\in \mathcal{P}_{1}(\mathbb{R}),$
\begin{equation*}
|f(t,y,z,\mu)|\le \theta_{t}+\psi\big(|y|\vee W_{1}(\mu,\delta_{0})\big)+\gamma_{0}|z|^{2},\ d\mathbb{P}\times dt\text{-}a.e.
\end{equation*}

\noindent\textbf{(A4)}\ Monotonicity in $\mu$: For all $t\in[0,T],\ y\in\mathbb{R},\ z\in\mathbb{R}^{d}$, and $\eta,\ \bar{\eta}\in L^{1}_{\mathcal{F}}(\Omega;\mathbb{R})$,
\begin{equation*}
f(t,y,z,\mathbb{P}_{\eta})\le f(t,y,z,\mathbb{P}_{\bar{\eta}}),\ d\mathbb{P}\times dt\text{-}a.e.,\ \ \text{whenever}\ \ \eta\le\bar{\eta},\ \mathbb{P}\text{-}a.s.
\end{equation*}

\noindent\textbf{(A5)}\ There exists a continuous, nondecreasing and concave function $\kappa:[0,+\infty)\rightarrow[0,+\infty)$ with $\kappa(0)=0,\ \kappa(u)>0,\ u>0$ and $\int_{0^{+}}\frac{du}{\kappa(u)}=+\infty$, such that, for all $t\in[0,T],\ y,\ \bar{y}\in\mathbb{R}$, $z\in\mathbb{R}^{d}$, and $\mu\in \mathcal{P}_{1}(\mathbb{R}),$
\begin{equation*}
f(t,y,z,\mu)-f(t,\bar{y},z,\mu)\le \kappa(y-\bar{y}),\ \ \text{with}\ \ y>\bar{y},\ d\mathbb{P}\times dt\text{-}a.e.
\end{equation*}

\noindent\textbf{(A6)$_{m}^{\lambda}$}\ There exists a continuous nondecreasing function $\zeta:[0,+\infty)\rightarrow[0,+\infty)$ with linear growth and $\zeta(0)=0$, such that, for all $t\in[0,T],\ y\in\mathbb{R}$, $z,\ \bar{z}\in\mathbb{R}^{d}$, and $\mu\in \mathcal{P}_{1}(\mathbb{R}),$
\begin{equation*}
|f(t,y,z,\mu)-f(t,y,\bar{z},\mu)|\le \zeta\big(\frac{|z-\bar{z}|}{\mathcal{IL}_{m}^{\lambda}(|z-\bar{z}|)}\big),\ d\mathbb{P}\times dt\text{-}a.e.
\end{equation*}

\noindent\textbf{(A7)}\ There exists a constant $L>0$, such that, for all $t\in[0,T],\ y\in\mathbb{R}$, $z\in\mathbb{R}^{d}$, and $\eta,\ \bar{\eta}\in L^{1}_{\mathcal{F}}(\Omega;\mathbb{R}),$
\begin{equation*}
f(t,y,z,\mathbb{P}_{\eta})-f(t,y,z,\mathbb{P}_{\bar{\eta}})\le L\mathbb{E}[(\eta-\bar{\eta})^{+}],\ d\mathbb{P}\times dt\text{-}a.e.
\end{equation*}

\begin{remark}\label{re 3.1}
\end{remark}
\begin{enumerate}
\item[(i)] For every $m\ge1$ and $\lambda>\frac{1}{2}$, there is a constant $M>0$ depending only on $m,\lambda$ satisfying $\mathcal{IL}_{m+1}^{\lambda}(x)\le M\cdot\mathcal{IL}_{m}^{\lambda}(x),\ x\ge0.$ This implies that when $m$ increases, assumptions \textbf{(A2)$_{m}^{\lambda}$} and \textbf{(A6)$_{m}^{\lambda}$} become weaker.

\item[(ii)] Since for every $m\ge1$, $\ln^{(m)}(e^{(m)}+x)\ge \ln e=1,\ x\ge0$, assumptions \textbf{(A2)$_{m}^{\lambda}$} and \textbf{(A6)$_{m}^{\lambda}$} become weaker as $\lambda\in(\frac{1}{2},+\infty)$ decreases.

\item[(iii)]  Observe that for every $m\ge1$ and $h>e^{(m)}$, there exists a constant $M>0$ depending only on $m,h$ such that $1\le \frac{\ln^{(m)}(h+x)}{\ln^{(m)}(e^{(m)}+x)}\le M,\ x\ge0.$ Therefore, the function $\mathcal{IL}_{m}^{\lambda}(x)$ appearing in assumptions \textbf{(A2)$_{m}^{\lambda}$} and \textbf{(A6)$_{m}^{\lambda}$} can be equivalently replaced by the following function: For every $m\ge1$ and $h>e^{(m)}$,
\begin{equation*}
\mathcal{IL}_{m,h}^{\lambda}(x):=\prod\limits_{i=1}^{m-1}\big(\ln^{(i)}(h+x)\big)^{\frac{1}{2}}\big(\ln^{(m)}(h+x)\big)^{\lambda},\ x\ge0.
\end{equation*}

\item[(iv)] It is not difficult to check that for every $m\ge1,\ \lambda>\frac{1}{2}$ and $\alpha\in(0,1)$, there exists a constant $M>0$ depending only on $m,\lambda,\alpha$ such that $|x|^{\alpha}\le M+\frac{|x|}{\mathcal{IL}_{m}^{\lambda}(|x|)},\ x\in\mathbb{R}.$ Hence, the iterated-logarithmically sub-linear growth with respect to $z$ in \textbf{(A2)$_{m}^{\lambda}$} is weaker than the sub-linear growth in $z$, which reduces to the logarithmic sub-linear growth of assumption \textbf{(H2)} in \cite{FH23} when $m=1\ \text{and}\ \lambda>\frac{1}{2}$.

\item[(v)] Obviously, assumption \textbf{(A7)} is stronger than assumption \textbf{(A4)}. Conversely, if the generator $f$ fulfills both the Lipschitz condition and assumption \textbf{(A4)} with respect to $\mu$, then it automatically satisfies assumption \textbf{(A7)} (cf. \cite[Remark 2.1]{LL18}).

\item[(vi)] Analogous to \cite[Remark 4.1]{LX22}, both assumptions \textbf{(A1)} and \textbf{(A4)} hold if and only if there exists a modulus of continuity $\rho:[0,+\infty)\rightarrow[0,+\infty)$, such that, for all $t\in[0,T],\ y\in\mathbb{R},\ z\in\mathbb{R}^{d},\ \text{and}\ \mu,\ \bar{\mu}\in\mathcal{P}_{1}(\mathbb{R})$, $f(t,y,z,\mu)-f(t,y,z,\bar{\mu})\le\rho\big(W_{1,+}(\mu,\bar{\mu})\big),\ d\mathbb{P}\times dt\text{-}a.e.,$ where $W_{1,+}(\mu,\bar{\mu}):=\inf\big\{\int_{\mathbb{R}\times\mathbb{R}}(x-y)^{+}\pi(dxdy),\ \pi\in\mathcal{P}_{1}(\mathbb{R}^{2})\ \text{with marginals}\ \mu\ \text{and}\ \bar{\mu}\big\}$.
\end{enumerate}

\begin{example}\label{ex 3.2}
For all $(\omega,t,y,z,\mu)\in \Omega\times[0,T]\times\mathbb{R}\times \mathbb{R}^{d}\times\mathcal{P}_{1}(\mathbb{R})$, we can consider the generator
\begin{equation*}
\begin{aligned}
f(\omega,t,y,z,\mu)&:=|W_{t}(\omega)|-e^{y}\cos^{2}|z|+|y|+\frac{2|z|\sin|z|}{\big(\ln(e^{(2)}+|z|)\big)^{\frac{1}{2}}\big(\ln\ln(e^{(2)}+|z|)\big)^{\frac{5}{6}}}\\
&\hspace{2em}-\text{sgn}(y)\sin^{2}(y)|z|^{2}+\Big(\int_{\mathbb{R}}x^{+}\mu(dx)\Big)^{\frac{1}{3}}+\int_{\mathbb{R}}x\mu(dx).
\end{aligned}
\end{equation*}
It is straightforward to check that $f$ satisfies assumptions \textbf{(A1)}, \textbf{(A2)$_{m}^{\lambda}$}, \textbf{(A3)} and \textbf{(A4)} with $\rho(x)=x+x^{\frac{1}{3}},\ x\ge0,\ \theta_{t}(\omega)=|W_{t}(\omega)|+2,\ K=2,\ \gamma=2,\ m=2,\ \lambda=\frac{5}{6},\ \gamma_{0}=2\ \ \text{and}\ \ \psi(u)=e^{u}+3u,\ u\ge0.$
\end{example}

\begin{example}\label{ex 3.3}
For all $(\omega,t,y,z,\mu)\in \Omega\times[0,T]\times\mathbb{R}\times \mathbb{R}^{d}\times\mathcal{P}_{1}(\mathbb{R})$, we can consider the generator
\begin{equation*}
\begin{aligned}
\bar{f}(\omega,t,y,z,\mu)&:=e^{|W_{t}(\omega)|}+y^{6}\textbf{1}_{\{y\le0\}}+\phi(|y|)+\cos|z|+|z|^{\frac{5}{6}}+\frac{|z|}{\ln(e+|z|)}\\
&\hspace{2em}+\frac{|z|}{\big(\ln(e^{(2)}+|z|)\big)^{\frac{1}{2}}\big(\ln\ln(e^{(2)}+|z|)\big)^{\frac{3}{4}}}+\arctan\Big(\int_{\mathbb{R}}x\mu(dx)\Big)+2\int_{\mathbb{R}}x\mu(dx),
\end{aligned}
\end{equation*}
where $\phi(u):=(u|\ln u|\ln|\ln u|)\textbf{1}_{\{0< u\le\varepsilon\}}+[\phi^{\prime}(\varepsilon-)(u-\varepsilon)+\phi(\varepsilon)]\textbf{1}_{\{u>\varepsilon\}},\ u>0,$ with $\phi(0)=0$ and $\varepsilon>0$ small enough. It can be verified that $\bar{f}$ satisfies assumptions \textbf{(A1)}, \textbf{(A2)$_{m}^{\lambda}$}, \textbf{(A3)}, \textbf{(A4)}, \textbf{(A5)}, \textbf{(A6)$_{m}^{\lambda}$} and \textbf{(A7)} with $\rho(x)=3x,\ x\ge0,\ \kappa(u)=\phi(u),\ u\ge0,\ m=2,\ \lambda=\frac{3}{4}\ \ \text{and}\ \ L=3.$
\end{example}

Firstly, the following proposition offers a crucial a priori estimate for the $L^{1}$ solution to the one-dimensional mean-field BSDE \eqref{eq 1.2} involving integrable parameters, which will be used later and extends the non mean-field results of \cite[Proposition 3.5]{FH24}.

\begin{proposition}\label{prop 3.4}
Let be given $\xi \in L_{\mathcal{F}_{T}}^{1}(\Omega;\mathbb{R})$ and a generator $f$ which fulfills assumption \textbf{(A2)$_{m}^{\lambda}$} with some $m\ge2\ \text{and}\ \lambda>\frac{1}{2}$. Suppose that $(Y,Z)$ is a solution of mean-field BSDE \eqref{eq 1.2} such that $(|Y_{t}|+\int_{0}^{t}\theta_{s}ds)_{t\in[0,T]}$ belongs to the class (D). Then there exists a constant $\widetilde{C}>0$ depending only on $m,\lambda,\gamma,K,T,\Vert\xi\Vert_{L^{1}(\Omega)}$ and $\Vert\theta\Vert_{\mathcal{L}_{\mathbb{F}}^{1}(0,T)}$ such that
\begin{equation*}
|Y_{t}|\le |Y_{t}|+\int_{0}^{t}\theta_{s}ds\le \widetilde{C}\mathbb{E}_{t}\Big[|\xi|+\int_{0}^{T}\theta_{s}ds\Big]+\widetilde{C},\ t\in[0,T],\ \mathbb{P}\text{-}a.s.
\end{equation*}
\end{proposition}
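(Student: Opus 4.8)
The plan is to combine an It\^o--Tanaka expansion of $|Y_t|$ with a carefully chosen test function that neutralises the iterated-logarithmic sub-linear growth in $z$, and then to pass to conditional expectations after a localization. First I would apply the It\^o--Tanaka formula to $|Y_t|$: writing $dY_t=-f(t,Y_t,Z_t,\mathbb{P}_{Y_t})\,dt+Z_t\,dW_t$ and discarding the nonnegative local-time increment, one gets
\begin{equation*}
|Y_t|\le |\xi|+\int_t^T \mathrm{sgn}(Y_s)\,f(s,Y_s,Z_s,\mathbb{P}_{Y_s})\,ds-\int_t^T \mathrm{sgn}(Y_s)\,Z_s\,dW_s.
\end{equation*}
Invoking assumption \textbf{(A2)$_m^\lambda$} together with the elementary identity $W_1(\mathbb{P}_{Y_s},\delta_0)=\mathbb{E}[|Y_s|]$, the generator term is dominated by $\theta_s+K|Y_s|+K\mathbb{E}[|Y_s|]+\gamma|Z_s|/\mathcal{IL}_m^\lambda(|Z_s|)$. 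Two features are worth noting already: the mean-field contribution $K\mathbb{E}[|Y_s|]$ is deterministic and hence behaves like an enlarged source term, and the only genuinely dangerous term is the almost linear one in $z$. It is convenient to work throughout with $U_t:=|Y_t|+\int_0^t\theta_s\,ds$, which lies in the class (D) by hypothesis.

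The core of the argument is the test-function step, which is also the main obstacle. Instead of $|Y_t|$ I would apply It\^o's formula to $\phi(|Y_t|)$ for a smooth increasing convex function $\phi$ that is comparable to the identity away from the origin (so that $\phi'$ stays bounded and $\phi(|Y|)\simeq|Y|$ is preserved). The second-order It\^o term $\tfrac12\phi''(|Y_s|)|Z_s|^2$ is then available to dominate $\phi'(|Y_s|)\gamma|Z_s|/\mathcal{IL}_m^\lambda(|Z_s|)$; completing the square gives, for each fixed $y$ and with $r=|z|\ge0$ and maximiser $r^\ast=r^\ast(y)$,
\begin{equation*}
\phi'(y)\,\frac{\gamma r}{\mathcal{IL}_m^\lambda(r)}-\tfrac12\phi''(y)\,r^2\le \frac{\gamma^2\,\phi'(y)^2}{2\,\phi''(y)\,\mathcal{IL}_m^\lambda(r^\ast)^2}.
\end{equation*}
The delicate point is to choose $\phi$ so that the right-hand side remains bounded in $y$ while $\phi$ itself stays of linear order; this is exactly where the precise exponents in $\mathcal{IL}_m^\lambda$ enter. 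Squaring the weight turns $\prod_{i=1}^{m-1}(\ln^{(i)})^{1/2}(\ln^{(m)})^{\lambda}$ into $\prod_{i=1}^{m-1}(\ln^{(i)})(\ln^{(m)})^{2\lambda}$, and the construction closes precisely because
\begin{equation*}
\int^{\infty}\frac{dr}{r\,\prod_{i=1}^{m-1}\ln^{(i)}(r)\,\big(\ln^{(m)}(r)\big)^{2\lambda}}<+\infty\quad\Longleftrightarrow\quad \lambda>\tfrac12 .
\end{equation*}
Following \cite[Proposition 3.5]{FH24}, this produces an admissible $\phi$ for which the dangerous $z$-term is absorbed up to an additive constant depending only on $m,\lambda,\gamma$.

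After this absorption the inequality becomes linear: $\phi(|Y_t|)$ is bounded above by $\phi(|\xi|)$, a deterministic constant times $(T-t)$, a term $\int_t^T\phi'(|Y_s|)\big(\theta_s+K|Y_s|+K\mathbb{E}[|Y_s|]\big)\,ds$, and a stochastic integral. Since $(Y,Z)\in\mathcal{S}^p_{\mathbb{F}}\times\mathcal{H}^p_{\mathbb{F}}$ only for $p<1$, this stochastic integral is merely a local martingale, so I would localize by stopping times $\tau_k\uparrow T$, take $\mathbb{E}_t[\cdot]$ on $[t,\tau_k]$, and let $k\to\infty$ using the class (D) property of $U$ together with (conditional) dominated convergence to remove the martingale under the original measure $\mathbb{P}$. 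This yields a conditional-expectation inequality for $\phi(|Y_t|)$ in which the deterministic mean-field term still appears.

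Finally I would break the mean-field self-reference and close the estimate. Taking full expectations in the above inequality and using that $\phi'$ is bounded gives $\mathbb{E}[|Y_t|]\le a+\int_t^T(b_s+2K\,\mathbb{E}[|Y_s|])\,ds$, with $a,b$ controlled by $\|\xi\|_{L^1(\Omega)}$ and $\|\theta\|_{\mathcal{L}^1_{\mathbb{F}}(0,T)}$; a backward Gronwall argument then yields $\sup_{s}\mathbb{E}[|Y_s|]\le C_0$ for a constant $C_0$ depending only on the stated data. Substituting $K\mathbb{E}[|Y_s|]\le KC_0$ folds the mean-field term into the source, and a further backward Gronwall argument (or Lemma \ref{le 2.4}) applied to the conditional-expectation inequality gives $\phi(|Y_t|)\le \widetilde C\,\mathbb{E}_t[\phi(|\xi|)+\int_0^T\theta_s\,ds]+\widetilde C$. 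Using the linear comparability of $\phi$ with the identity and reinserting $\int_0^t\theta_s\,ds$ then delivers the asserted bound on $|Y_t|+\int_0^t\theta_s\,ds$. The principal difficulty is the test-function construction of the second paragraph --- absorbing the iterated-logarithmic sub-linear $z$-term while keeping the estimate linear in the data and under $\mathbb{P}$ --- with the interplay against the deterministic mean-field term as a secondary point handled by the preliminary Gronwall bound.
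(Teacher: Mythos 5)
Your overall architecture coincides with the paper's: Itô--Tanaka on $U_t=|Y_t|+\int_0^t\theta_s\,ds$, a test function whose second-order Itô term absorbs the $\gamma|z|/\mathcal{IL}_m^\lambda(|z|)$ growth, localization plus the class (D) hypothesis to kill the local martingale, an unconditional backward Gronwall bound on $\sup_s\mathbb{E}[|Y_s|]$ to defuse the mean-field term $K\mathbb{E}[|Y_s|]=KW_1(\mathbb{P}_{Y_s},\delta_0)$, and substitution back into the conditional inequality. However, the central quantitative claim in your test-function step is false as stated: with a \emph{time-independent} $\phi$ of linear order and bounded $\phi'$, the absorption remainder is \emph{not} bounded in $y$. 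Indeed, bounded $\phi'$ with $\phi''>0$ forces $\phi''$ to be integrable, and the natural (essentially optimal) choice compatible with your integral criterion is $\phi''(y)\asymp\big((h+y)\,\mathcal{IL}_m^\lambda(y)^2\big)^{-1}$. The maximizer $r^\ast(y)$ of $r\mapsto \phi'(y)\gamma r/\mathcal{IL}_m^\lambda(r)-\tfrac12\phi''(y)r^2$ then satisfies $r^\ast\asymp \gamma/\big(\phi''(y)\mathcal{IL}_m^\lambda(r^\ast)\big)$, which is of order $h+y$ up to slowly varying factors; since $\mathcal{IL}_m^\lambda$ is slowly varying, $\mathcal{IL}_m^\lambda(r^\ast)\asymp\mathcal{IL}_m^\lambda(y)$, and your right-hand side becomes
\begin{equation*}
\frac{\gamma^{2}\,\phi'(y)^{2}}{2\,\phi''(y)\,\mathcal{IL}_m^\lambda(r^\ast)^{2}}\ \asymp\ \gamma^{2}\,(h+y)\,\frac{\mathcal{IL}_m^\lambda(y)^{2}}{\mathcal{IL}_m^\lambda(r^\ast)^{2}}\ \asymp\ \gamma^{2}(h+y),
\end{equation*}
i.e.\ \emph{linear} in $y$, not an additive constant. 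This is precisely why the paper's test function \eqref{eq 3.1}, taken from \cite{FH24}, carries the exponential time factor $\exp\{2(K+\frac{2\gamma^{2}}{2\lambda-1})t\}$: the term $\partial_t\Phi\ge c\cdot\tfrac12(h+x)$ is what soaks up both the linear remainder of the completion of squares and the one-sided linear term $K\partial_x\Phi\,x$, yielding the pointwise inequality \eqref{eq 3.3} with \emph{no} leftover in $y$, so that after the limit only the mean-field term survives in \eqref{eq 3.9}.

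The good news is that the error is local and repairable within your own scheme. Since you deliberately keep the linear terms $K|Y_s|+K\mathbb{E}[|Y_s|]$ outside the absorption and dispose of them by Gronwall, an extra remainder of order $C(h+|Y_s|)$ merely enlarges the linear coefficient in your drift bound; your two-stage Gronwall argument (first unconditional, to get $\sup_s\mathbb{E}[|Y_s|]\le C_0$; then the conditional one, applying $\mathbb{E}_t$ to the inequality at time $r$ and running Gronwall in $r\mapsto\mathbb{E}_t[|Y_r|]$, exactly as the paper itself does in Step 1 of Theorem \ref{th 4.9}) goes through verbatim and still yields the stated bound with $\widetilde C$ depending only on $m,\lambda,\gamma,K,T,\Vert\xi\Vert_{L^1(\Omega)},\Vert\theta\Vert_{\mathcal{L}^1_{\mathbb{F}}(0,T)}$. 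Alternatively, adding the time factor as in \eqref{eq 3.1} restores your "bounded remainder" claim and removes the need for the conditional Gronwall, which is the route the paper takes; either fix is fine, but as written the sentence asserting the remainder "remains bounded in $y$" must be corrected, since the subsequent claim that the $z$-term is "absorbed up to an additive constant" rests on it.
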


\begin{proof}
Based on (iii) in Remark \ref{re 3.1}, without loss of generality we can make the assumption that the generator $f$ satisfies assumption \textbf{(A2)$_{m}^{\lambda}$}, wherein $\mathcal{IL}_{m}^{\lambda}$ is substituted by $\mathcal{IL}_{m,h}^{\lambda}$, for a sufficiently large constant $h>e^{(m)}$ which depends only on $m,\lambda,\gamma$.

Let us first show that there is a uniform upper bound estimate on the expected value of $Y$, which involves proving the existence of a constant $C>0$ depending only on $m,\lambda,\gamma,K,T$ such that
\begin{equation*}
\mathbb{E}[|Y_{t}|]\le C\mathbb{E}\Big[|\xi|+\int_{0}^{T}\theta_{s}ds\Big]+C,\ t\in[0,T].
\end{equation*}

\noindent For the sake of clarity, we put $\widetilde{Y}_{t}:=|Y_{t}|+\int_{0}^{t}\theta_{s}ds,\ \widetilde{Z}_{t}:=\text{sgn}(Y_{t})Z_{t},\ t\in[0,T].$ Then applying It\^{o}-Tanaka's formula to $\widetilde{Y}_{t}$ on $[t,T]$ gives that
\begin{equation*}
\begin{aligned}
\widetilde{Y}_{t}=\widetilde{Y}_{T}+\int_{t}^{T}\big(\text{sgn}(Y_{s})f(s,Y_{s},Z_{s},\mathbb{P}_{Y_{s}})-\theta_{s}\big)ds-\int_{t}^{T}\widetilde{Z}_{s}dW_{s}-\int_{t}^{T}dL_{s},\ t\in[0,T],
\end{aligned}
\end{equation*}
where the process $L=(L_{t})_{t\in[0,T]}$ denotes the local time of $Y$ at $0$.

Now, we introduce a test function $\Phi(\cdot,\cdot)$, similar to (3.23) in \cite{FH24} and defined as follows: For all $h\ge e^{(m)}\ \text{and}\ (t,x)\in[0,T]\times[0,+\infty)$,
\begin{equation*}\label{eq 3.1}\tag{3.1}
\begin{aligned}
\Phi(t,x):=(h+x)\Big[1-\big(\ln^{(m)}(h+x)\big)^{1-2\lambda}\Big]\cdot\exp\Big\{2\Big(K+\frac{2\gamma^{2}}{2\lambda-1}\Big)t\Big\}.
\end{aligned}
\end{equation*}

\noindent A direct calculation shows that for all $(t,x)\in[0,T]\times[0,+\infty)$,
\begin{equation*}\label{eq 3.2}\tag{3.2}
\begin{aligned}
0<\partial_{x}\Phi(t,x)&=\bigg[1-\frac{1}{\big(\ln^{(m)}(h+x)\big)^{2\lambda-1}}\Big(1-\frac{2\lambda-1}{\prod_{j=1}^{m}\ln^{(j)}(h+x)}\Big)\bigg]\\
&\hspace{2.6em}\times\exp\Big\{2\Big(K+\frac{2\gamma^{2}}{2\lambda-1}\Big)t\Big\}\le Q_{0},
\end{aligned}
\end{equation*}
where $Q_{0}:=\exp\big\{2\big(K+\frac{2\gamma^{2}}{2\lambda-1}\big)T\big\}.$ It then follows from \cite[Proposition 3.4]{FH24} that for a sufficiently large constant $h>e^{(m)}$ which depends only on $m,\lambda,\gamma$, it holds that for all $(t,x,z)\in[0,T]\times[0,+\infty)\times \mathbb{R}^{d}$,
\begin{equation}\label{eq 3.3}\tag{3.3}
\begin{aligned}
-K\partial_{x}\Phi(t,x)x-\partial_{x}\Phi(t,x)\frac{\gamma|z|}{\mathcal{IL}_{m,h}^{\lambda}(|z|)}+\frac{1}{2}\partial_{xx}\Phi(t,x)|z|^{2}+\partial_{t}\Phi(t,x)\ge0.
\end{aligned}
\end{equation}

\noindent By reapplying It\^{o}-Tanaka's formula to $\Phi(t,\widetilde{Y}_{t})$ and combining it with \textbf{(A2)$_{m}^{\lambda}$}, we get
\begin{equation*}\label{eq 3.4}\tag{3.4}
\begin{aligned}
d\Phi(t,\widetilde{Y}_{t})&=\partial_{x}\Phi(t,\widetilde{Y}_{t})\big(-\text{sgn}(Y_{t})f(t,Y_{t},Z_{t},\mathbb{P}_{Y_{t}})+\theta_{t}\big)dt+\partial_{x}\Phi(t,\widetilde{Y}_{t})\widetilde{Z}_{t}dW_{t}\\
&\hspace{2em}+\partial_{x}\Phi(t,\widetilde{Y}_{t})dL_{t}+\frac{1}{2}\partial_{xx}\Phi(t,\widetilde{Y}_{t})|\widetilde{Z}_{t}|^{2}dt+\partial_{t}\Phi(t,\widetilde{Y}_{t})dt\\
&\ge \bigg[-K\partial_{x}\Phi(t,\widetilde{Y}_{t})|Y_{t}|-\partial_{x}\Phi(t,\widetilde{Y}_{t})\frac{\gamma|Z_{t}|}{\mathcal{IL}_{m,h}^{\lambda}(|Z_{t}|)}+\frac{1}{2}\partial_{xx}\Phi(t,\widetilde{Y}_{t})|Z_{t}|^{2}+\partial_{t}\Phi(t,\widetilde{Y}_{t})\bigg]dt\\
&\hspace{2em}-K\partial_{x}\Phi(t,\widetilde{Y}_{t})\cdot\mathbb{E}[|Y_{t}|]dt+\partial_{x}\Phi(t,\widetilde{Y}_{t})\widetilde{Z}_{t}dW_{t},\ t\in[0,T].
\end{aligned}
\end{equation*}

\noindent Thus, noting that $|Y_{t}|\le \widetilde{Y}_{t},\ t\in[0,T]$, we can deduce from \eqref{eq 3.3} that
\begin{equation}\label{eq 3.5}\tag{3.5}
\begin{aligned}
d\Phi(t,\widetilde{Y}_{t})\ge -K\partial_{x}\Phi(t,\widetilde{Y}_{t})\cdot\mathbb{E}[|Y_{t}|]dt+\partial_{x}\Phi(t,\widetilde{Y}_{t})\widetilde{Z}_{t}dW_{t},\ t\in[0,T].
\end{aligned}
\end{equation}

\noindent For every $n\ge1$ and $t\in[0,T]$, we define the following stopping time:
\begin{equation*}
\begin{aligned}
\tau_{n}^{t}:=\inf\Big\{s\in[t,T]:\int_{t}^{s}|\partial_{x}\Phi(r,\widetilde{Y}_{r})|^{2}|\widetilde{Z}_{r}|^{2}dr\ge n\Big\}\wedge T
\end{aligned}
\end{equation*}
with the convention that $\inf \emptyset=+\infty$. Then, using the definition of $\tau_{n}^{t}$ and combining this with \eqref{eq 3.5}, we derive that, for every $n\ge1$ and $t\in[0,T]$,
\begin{equation}\label{eq 3.6}\tag{3.6}
\begin{aligned}
\Phi(t,\widetilde{Y}_{t})\le \mathbb{E}_{t}\Big[\Phi(\tau_{n}^{t},\widetilde{Y}_{\tau_{n}^{t}})+K\int_{t}^{\tau_{n}^{t}}\partial_{x}\Phi(s,\widetilde{Y}_{s})\cdot\mathbb{E}[|Y_{s}|]ds\Big].
\end{aligned}
\end{equation}

\noindent Moreover, in view of the definition of $\Phi(\cdot,\cdot)$, one has that, for a sufficiently large constant $h>e^{(m)}$,
\begin{equation}\label{eq 3.7}\tag{3.7}
\begin{aligned}
\frac{1}{2}(h+x)\le \Phi(t,x)\le Q_{0}(h+x),\ (t,x)\in [0,T]\times [0,+\infty).
\end{aligned}
\end{equation}

\noindent By plugging \eqref{eq 3.7} into \eqref{eq 3.6} and taking into account that $0<\partial_{x}\Phi(t,x)\le Q_{0},\ (t,x)\in[0,T]\times[0,+\infty)$, we obtain that, for every $n\ge1$ and $t\in[0,T]$,
\begin{equation}\label{eq 3.8}\tag{3.8}
\begin{aligned}
\frac{1}{2}(h+\widetilde{Y}_{t})\le \Phi(t,\widetilde{Y}_{t})&\le \mathbb{E}_{t}\Big[\Phi(\tau_{n}^{t},\widetilde{Y}_{\tau_{n}^{t}})+K\int_{t}^{\tau_{n}^{t}}\partial_{x}\Phi(s,\widetilde{Y}_{s})\cdot\mathbb{E}[|Y_{s}|]ds\Big]\\
&\le Q_{0}h+Q_{0}\mathbb{E}_{t}[\widetilde{Y}_{\tau_{n}^{t}}]+KQ_{0}\int_{t}^{T}\mathbb{E}[|Y_{s}|]ds.
\end{aligned}
\end{equation}

\noindent As $\widetilde{Y}$ is of the class (D), and $\tau_{n}^{t}\uparrow T$, as $n\rightarrow +\infty$, for all $t\in[0,T]$, by taking the limit as $n$ tends to infinity on both sides of the latter inequality, we get that
\begin{equation*}\label{eq 3.9}\tag{3.9}
\begin{aligned}
|Y_{t}|\le\widetilde{Y}_{t}\le  2Q_{0}h+2Q_{0}\mathbb{E}_{t}[\widetilde{Y}_{T}]+2KQ_{0}\int_{t}^{T}\mathbb{E}[|Y_{s}|]ds,\ t\in[0,T].
\end{aligned}
\end{equation*}

\noindent Furthermore, according to the definition of $\widetilde{Y}$, taking the expectation on both sides yields
\begin{equation*}
\begin{aligned}
\mathbb{E}[|Y_{t}|]\le\mathbb{E}[\widetilde{Y}_{t}]\le 2Q_{0}h+2Q_{0}\mathbb{E}\Big[|\xi|+\int_{0}^{T}\theta_{s}ds\Big]+2KQ_{0}\int_{t}^{T}\mathbb{E}[|Y_{s}|]ds,\ t\in[0,T].
\end{aligned}
\end{equation*}
The (backward) Gronwall's inequality allows to conclude that for all $t\in[0,T]$,
\begin{equation}\label{eq 3.10}\tag{3.10}
\begin{aligned}
\mathbb{E}[|Y_{t}|]\le \Big(2Q_{0}h+2Q_{0}\mathbb{E}\Big[|\xi|+\int_{0}^{T}\theta_{s}ds\Big]\Big)e^{2KQ_{0}T}\le C\mathbb{E}\Big[|\xi|+\int_{0}^{T}\theta_{s}ds\Big]+C,
\end{aligned}
\end{equation}
where $C:=2Q_{0}(h\vee 1)e^{2KQ_{0}T}$, which completes the upper bound estimate on $\mathbb{E}[|Y_{t}|],\ t\in[0,T]$.

Finally, substituting \eqref{eq 3.10} in \eqref{eq 3.9} and recalling the definition of $\widetilde{Y}_{t},\ t\in[0,T]$, we conclude that there is a constant $\widetilde{C}>0$ depending only on $m,\lambda,\gamma,K,T,\Vert\xi\Vert_{L^{1}(\Omega)}$ and $\Vert\theta\Vert_{\mathcal{L}_{\mathbb{F}}^{1}(0,T)}$ such that
\begin{equation*}\label{eq 3.11}\tag{3.11}
|Y_{t}|\le |Y_{t}|+\int_{0}^{t}\theta_{s}ds\le \widetilde{C}\mathbb{E}_{t}\Big[|\xi|+\int_{0}^{T}\theta_{s}ds\Big]+\widetilde{C},\ t\in[0,T],\ \mathbb{P}\text{-}a.s.
\end{equation*}

\end{proof}

Next, before presenting our main results, we give the stability result of bounded solutions for mean-field BSDEs with quadratic growth, which plays an important role in our subsequent proof of the existence of $L^{1}$ solutions for mean-field BSDEs \eqref{eq 1.2} with integrable parameters.

\begin{lemma}\label{le 3.5} Suppose that for all $n\ge1$, the terminal value $\xi_{n}\in L^{\infty}_{\mathcal{F}_{T}}(\Omega;\mathbb{R})$ and the generator $f_{n}:\Omega\times[0,T]\times\mathbb{R}\times\mathbb{R}^{d}\times \mathcal{P}_{1}(\mathbb{R})\rightarrow\mathbb{R}$ is $\mathbb{F}$-progressively measurable, for all $(y,z,\mu)\in \mathbb{R}\times\mathbb{R}^{d}\times \mathcal{P}_{1}(\mathbb{R})$. Furthermore, suppose that $(Y^{n},Z^{n})\in \mathcal{S}_{\mathbb{F}}^{\infty}(0,T;\mathbb{R})\times \mathcal{H}_{\mathbb{F}}^{2}(0,T;\mathbb{R}^{d})$ is a solution of the following BSDE:
\begin{equation*}
\begin{aligned}
Y_{t}^{n}=\xi_{n}+\int_{t}^{T}f_{n}(s,Y_{s}^{n},Z_{s}^{n},\mathbb{P}_{Y_{s}^{n}})ds-\int_{t}^{T}Z_{s}^{n}dW_{s},\ t\in[0,T],
\end{aligned}
\end{equation*}

\noindent $Y_{t}^{n}(\omega)\uparrow Y_{t}(\omega)$ (resp., $Y_{t}^{n}(\omega)\downarrow Y_{t}(\omega)$), as $n\rightarrow+\infty$, $d\mathbb{P}\times dt\text{-}a.e.$, and that there exist two constants $M>0,\ \gamma_{0}>0$ and a nonnegative process $\theta=(\theta_{t})_{t\in[0,T]}\in\mathcal{L}^{1}_{\mathbb{F}}(0,T;\mathbb{R})$, together with a continuous nondecreasing function $\psi:[0,+\infty)\rightarrow [0,+\infty)$, such that $\sup_{n\ge1}|Y_{t}^{n}(\omega)|\le M,\ d\mathbb{P}\times dt\text{-}a.e.$, and for all $(y,z,\mu)\in[-M,M]\times\mathbb{R}^{d}\times\mathcal{P}_{1}(\mathbb{R})$,
\begin{equation*}
\begin{aligned}
\sup\limits_{n\ge1}|f_{n}(\omega,t,y,z,\mu)|\le \theta_{t}(\omega)+\psi\big(W_{1}(\mu,\delta_{0})\big)+\gamma_{0} |z|^{2},\ d\mathbb{P}\times dt\text{-}a.e.
\end{aligned}
\end{equation*}
Then there exists a process $Z\in\mathcal{H}_{\mathbb{F}}^{2}(0,T;\mathbb{R}^{d})$ such that the sequence $(Z^{n})_{n\ge1}$ strongly converges to $Z$ in $\mathcal{H}_{\mathbb{F}}^{2}(0,T;\mathbb{R}^{d})$. Moreover, if the sequence of generators $\{f_{n}\}_{n\ge1}$ converges locally uniformly to a generator $f$, that is, $d\mathbb{P}\times dt\text{-}a.e.$, for every sequence $(y^{n},z^{n},\mu^{n})_{n\ge1}$ in $\mathbb{R}\times\mathbb{R}^{d}\times \mathcal{P}_{1}(\mathbb{R})$,
\begin{equation*}
\begin{aligned}
f_{n}(\omega,t,y^{n},z^{n},\mu^{n})\rightarrow f(\omega,t,y,z,\mu),\ \ \ \text{if}\ \ (y^{n},z^{n},\mu^{n})\rightarrow(y,z,\mu),\ \text{as}\ n\rightarrow+\infty,
\end{aligned}
\end{equation*}
then $(Y,Z)\in\mathcal{S}_{\mathbb{F}}^{\infty}(0,T;\mathbb{R})\times \mathcal{H}_{\mathbb{F}}^{2}(0,T;\mathbb{R}^{d})$ is a solution of mean-field BSDE $(\xi,f)$ with $\xi=Y_{T}$.
\end{lemma}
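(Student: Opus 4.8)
The plan is to establish this as the mean-field analogue of Kobylanski's monotone stability theorem for quadratic BSDEs (cf. \cite{K00}), the key point being that the law enters the estimates only through the \emph{bounded} quantity $\psi\big(W_{1}(\mathbb{P}_{Y^{n}_{s}},\delta_{0})\big)\le\psi(M)$, since $W_{1}(\mathbb{P}_{Y^{n}_{s}},\delta_{0})=\mathbb{E}[|Y^{n}_{s}|]\le M$. Without loss of generality I treat the increasing case $Y^{n}_{t}(\omega)\uparrow Y_{t}(\omega)$ (the decreasing case is symmetric). The first step is a uniform a priori bound $\sup_{n\ge1}\mathbb{E}\int_{0}^{T}|Z^{n}_{s}|^{2}ds<+\infty$. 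I would obtain it by applying It\^o's formula to $\Theta(Y^{n}_{t})$ with the convex test function $\Theta(y):=\frac{1}{2\gamma_{0}^{2}}\big(e^{2\gamma_{0}|y|}-2\gamma_{0}|y|-1\big)$, which satisfies $\Theta''(y)-2\gamma_{0}|\Theta'(y)|\equiv2$. Using $|Y^{n}|\le M$ and $|f_{n}|\le\theta_{s}+\psi(M)+\gamma_{0}|z|^{2}$, the $\gamma_{0}|Z^{n}|^{2}$ contribution is absorbed by $\tfrac12\Theta''|Z^{n}|^{2}$, leaving $\mathbb{E}\int_{0}^{T}|Z^{n}_{s}|^{2}ds\le\mathbb{E}[\Theta(\xi_{n})]+\mathbb{E}\int_{0}^{T}|\Theta'(Y^{n}_{s})|(\theta_{s}+\psi(M))ds$, which is bounded uniformly in $n$ because $\Theta,\Theta'$ are bounded on $[-M,M]$ and $\theta\in\mathcal{L}^{1}_{\mathbb{F}}(0,T;\mathbb{R})$. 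Run from a stopping time, the same computation yields the energy (BMO-type) bound $\mathrm{ess\,sup}_{\tau}\,\mathbb{E}_{\tau}\int_{\tau}^{T}|Z^{n}_{s}|^{2}ds\le C$ uniformly in $n$, hence, via a reverse H\"older / John--Nirenberg argument, $\sup_{n}\mathbb{E}\big[(\int_{0}^{T}|Z^{n}_{s}|^{2}ds)^{2}\big]<+\infty$.

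The heart of the argument is the strong convergence $Z^{n}\to Z$ in $\mathcal{H}^{2}_{\mathbb{F}}(0,T;\mathbb{R}^{d})$, which I would prove by showing $(Z^{n})$ is Cauchy. For $n>m$ set $\delta Y:=Y^{n}-Y^{m}\ge0$, and apply It\^o to $\phi_{a}(\delta Y)$ with $\phi_{a}(x):=\frac{1}{2a^{2}}(e^{2ax}-2ax-1)$, choosing $a$ large (e.g. $a=4\gamma_{0}$) so that $\phi_{a}''-4\gamma_{0}\phi_{a}'\ge2$. Bounding the generator difference by $|f_{n}(\cdots)|+|f_{m}(\cdots)|\le2\theta_{s}+2\psi(M)+\gamma_{0}(|Z^{n}|^{2}+|Z^{m}|^{2})$ and using $|Z^{n}|^{2}\le2|Z^{n}-Z^{m}|^{2}+2|Z^{m}|^{2}$, the term $2\gamma_{0}\phi_{a}'|Z^{n}-Z^{m}|^{2}$ is absorbed on the left, leaving
\begin{equation*}
\mathbb{E}\int_{0}^{T}|Z^{n}_{s}-Z^{m}_{s}|^{2}ds\le C\,\mathbb{E}\big[\phi_{a}(\delta Y_{T})\big]+C\,\mathbb{E}\int_{0}^{T}\phi_{a}'(\delta Y_{s})\big(\theta_{s}+\psi(M)\big)ds+C\,\mathbb{E}\int_{0}^{T}\phi_{a}'(\delta Y_{s})|Z^{m}_{s}|^{2}ds.
\end{equation*}
The first two terms tend to $0$ by dominated convergence: $\delta Y_{T}=\xi_{n}-\xi_{m}\to0$ (the $\xi_{n}=Y^{n}_{T}$ are monotone and bounded, hence convergent to $\xi:=Y_{T}$) and $\delta Y_{s}\to0$ a.e., while $\phi_{a},\phi_{a}'$ are bounded on $[0,2M]$ and $\theta\in\mathcal{L}^{1}_{\mathbb{F}}$.

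I expect the \emph{main obstacle} to be the residual quadratic term $\mathbb{E}\int_{0}^{T}\phi_{a}'(\delta Y_{s})|Z^{m}_{s}|^{2}ds$: the weight $\phi_{a}'(\delta Y_{s})\le\phi_{a}'(Y_{s}-Y^{m}_{s})$ vanishes a.e. as $m\to\infty$, but the factor $|Z^{m}|^{2}$ is not dominated, so naive dominated convergence fails and the uniform bound on $\mathbb{E}\int|Z^{m}|^{2}$ alone is insufficient (pointwise higher integrability of $|Z^{m}|^{2}$ is not available). This is exactly where the quadratic-BSDE machinery is needed, and it is resolved by the uniform energy/BMO estimates of the first step together with the uniform convergence $\sup_{t}|Y^{n}_{t}-Y_{t}|\to0$; these are established in tandem (the monotone convergence upgrades to uniform convergence once the limit is known continuous, while the continuity follows from the equation), which is the delicate Kobylanski bootstrap. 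Concretely, writing $\mathbb{E}\int\phi_{a}'(\delta Y)|Z^{m}|^{2}ds\le\big(\mathbb{E}[\sup_{s}\phi_{a}'(Y_{s}-Y^{m}_{s})^{2}]\big)^{1/2}\big(\mathbb{E}[(\int_{0}^{T}|Z^{m}|^{2}ds)^{2}]\big)^{1/2}$, the second factor is bounded by the step-one moment estimate and the first tends to $0$, so the residual vanishes uniformly in $n>m$ and $(Z^{n})$ is Cauchy.

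Finally I would pass to the limit in the BSDE. Along a subsequence $(Y^{n}_{s},Z^{n}_{s})\to(Y_{s},Z_{s})$ a.e., and the mean-field term is harmless: since $|Y^{n}_{s}|\le M$ and $Y^{n}_{s}\to Y_{s}$ a.s., dominated convergence gives $W_{1}(\mathbb{P}_{Y^{n}_{s}},\mathbb{P}_{Y_{s}})\le\mathbb{E}[|Y^{n}_{s}-Y_{s}|]\to0$; feeding $(Y^{n}_{s},Z^{n}_{s},\mathbb{P}_{Y^{n}_{s}})\to(Y_{s},Z_{s},\mathbb{P}_{Y_{s}})$ into the local uniform convergence hypothesis on $\{f_{n}\}$ yields $f_{n}(s,Y^{n}_{s},Z^{n}_{s},\mathbb{P}_{Y^{n}_{s}})\to f(s,Y_{s},Z_{s},\mathbb{P}_{Y_{s}})$ a.e. The drift integrals then converge by a generalized dominated convergence argument using $|f_{n}|\le\theta_{s}+\psi(M)+\gamma_{0}|Z^{n}|^{2}$ and the $L^{1}$-convergence $\int_{0}^{T}|Z^{n}|^{2}ds\to\int_{0}^{T}|Z|^{2}ds$ coming from strong $\mathcal{H}^{2}$-convergence, while $\int_{0}^{\cdot}Z^{n}_{s}dW_{s}\to\int_{0}^{\cdot}Z_{s}dW_{s}$ by the same convergence. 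Letting $n\to\infty$ in the equation for $(Y^{n},Z^{n})$ identifies $(Y,Z)$ as a solution of the mean-field BSDE $(\xi,f)$ with $\xi=Y_{T}=\lim_{n}\xi_{n}$; the resulting representation gives $Y$ a continuous modification, and since $|Y|\le M$ we conclude $(Y,Z)\in\mathcal{S}^{\infty}_{\mathbb{F}}(0,T;\mathbb{R})\times\mathcal{H}^{2}_{\mathbb{F}}(0,T;\mathbb{R}^{d})$.
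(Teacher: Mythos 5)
Your skeleton (uniform $\mathcal{H}^{2}$ bound, Cauchy estimate via exponential test functions, passage to the limit) is the right Kobylanski-type architecture, and your opening observation is exactly the reduction the paper exploits: since $W_{1}(\mathbb{P}_{Y^{n}_{s}},\delta_{0})=\mathbb{E}[|Y^{n}_{s}|]\le M$, the paper's entire published proof is a one-line application of Luo--Fan \cite[Proposition 3.1]{LF18} to the frozen generators $g_{n}(\omega,t,y,z):=f_{n}(\omega,t,y,z,\mathbb{P}_{Y^{n}_{t}})$, which satisfy the classical quadratic-growth hypotheses with $\theta_{t}+\psi(M)$ in place of $\theta_{t}$. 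However, your handling of the residual quadratic term $\mathbb{E}\int_{0}^{T}\phi_{a}'(\delta Y_{s})|Z^{m}_{s}|^{2}ds$ contains two genuine gaps. First, the BMO/John--Nirenberg step does not go through: running your energy estimate from a stopping time $\tau$ yields $\mathbb{E}_{\tau}\int_{\tau}^{T}|Z^{n}_{s}|^{2}ds\le C\big(1+\mathbb{E}_{\tau}\big[\int_{0}^{T}\theta_{s}ds\big]\big)$, and since $\theta$ is only assumed to lie in $\mathcal{L}^{1}_{\mathbb{F}}(0,T;\mathbb{R})$, the conditional expectation on the right is in general an \emph{unbounded} random variable; a uniform BMO bound would require an essentially bounded conditional energy, i.e., a BMO-type condition on $\theta$ far stronger than integrability. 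Hence $\sup_{n}\mathbb{E}\big[\big(\int_{0}^{T}|Z^{n}_{s}|^{2}ds\big)^{2}\big]<+\infty$ is unsupported, and the second Cauchy--Schwarz factor in your key estimate is unjustified. Second, the first factor $\mathbb{E}\big[\sup_{s}\phi_{a}'(Y_{s}-Y^{m}_{s})^{2}\big]\to0$ requires uniform-in-$s$ convergence of $Y^{m}$ to $Y$; by Dini this needs continuity of the limit $Y$ (a monotone limit of continuous processes is only semicontinuous), and continuity is exactly what you propose to extract from the limit equation --- which is available only \emph{after} the strong $\mathcal{H}^{2}$ convergence you are in the middle of proving. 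You flag this circularity (``established in tandem'') but never break it, so the central step of the proof is not actually carried out.

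The repair --- and the way \cite[Proposition 3.1]{LF18}, on which the paper relies, actually proceeds (a full version appears, commented out, in the paper's source) --- avoids both higher moments and uniform convergence of $Y^{m}$. From the uniform $\mathcal{H}^{2}$ bound extract a subsequence of $(Z^{n})$ converging \emph{weakly} to some $Z\in\mathcal{H}^{2}_{\mathbb{F}}(0,T;\mathbb{R}^{d})$; bound the generator difference by $2\theta_{s}+2\psi(M)+\gamma_{0}(|Z^{n}_{s}|^{2}+|Z^{k}_{s}|^{2})$ and split $|Z^{n}|^{2}+|Z^{k}|^{2}\le 3|Z^{n}-Z^{k}|^{2}+5|Z-Z^{k}|^{2}+5|Z|^{2}$. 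After absorbing the $|Z^{n}-Z^{k}|^{2}$ term into the left-hand side via the test function, pass to the liminf in $n$ for fixed $k$ using weak lower semicontinuity of the $\mathcal{H}^{2}$-norm (so the absorbed difference becomes $|Z-Z^{k}|^{2}$, which is absorbed in turn at the cost of a larger exponential coefficient), leaving only $\mathbb{E}\int_{0}^{T}\varphi'(Y_{s}-Y^{k}_{s})\big(2\theta_{s}+5\gamma_{0}|Z_{s}|^{2}\big)ds$. Here the dominator $2\theta+5\gamma_{0}|Z|^{2}$ is a single \emph{fixed} integrable function, so ordinary dominated convergence, using only $Y^{k}\uparrow Y$ a.e. and boundedness of $\varphi'$ on $[0,2M]$, gives $Z^{k}\to Z$ strongly in $\mathcal{H}^{2}$ --- no BMO bound, no second moment of $\int|Z^{m}|^{2}$, and no continuity of $Y$ required. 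Your final limit passage (a.e. convergence along a fast subsequence, $W_{1}(\mathbb{P}_{Y^{n}_{s}},\mathbb{P}_{Y_{s}})\le\mathbb{E}[|Y^{n}_{s}-Y_{s}|]\to0$, generalized dominated convergence, and BDG to recover continuity and uniform convergence of $Y$) is sound once the strong convergence of $(Z^{n})$ is in hand.
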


\begin{proof}
Lemma \ref{le 3.5} is a direct consequence of \cite[Proposition 3.1]{LF18}. Indeed, it suffices to apply Proposition 3.1 to $g_{n}(\omega,t,y,z):=f_{n}(\omega,t,y,z,\mathbb{P}_{Y_{t}^{n}}),\ n\ge1$. (see \cite[Proposition 3.1]{LF18}).
\end{proof}

With the help of Proposition \ref{prop 3.4} and Lemma \ref{le 3.5}, we will formulate the existence theorem of an $L^{1}$ solution for one-dimensional mean-field BSDE \eqref{eq 1.2}. For this we use the localization approach employed in \cite{BH06} and proceed in analogy to the proof of \cite[Theorem 2.1]{FH23}.

\begin{theorem}\label{th 3.6}
Let assumptions \textbf{(A1)}, \textbf{(A2)$_{m}^{\lambda}$}, \textbf{(A3)} and \textbf{(A4)} be satisfied with some $m\ge2\ \text{and}\ \lambda>\frac{1}{2}$. Then, for every given $\xi\in L_{\mathcal{F}_{T}}^{1}(\Omega;\mathbb{R})$, the mean-field BSDE \eqref{eq 1.2} admits a solution $(Y,Z)$ such that $Y$ is of the class (D) and $(Y,Z)\in\mathcal{S}_{\mathbb{F}}^{p}(0,T;\mathbb{R})\times \mathcal{H}_{\mathbb{F}}^{p}(0,T;\mathbb{R}^{d})$, for all $p\in(0,1)$. Moreover, there exists a constant $\widetilde{C}>0$ depending only on $m,\lambda,\gamma,K,T,\Vert\xi\Vert_{L^{1}(\Omega)}$ and $\Vert\theta\Vert_{\mathcal{L}_{\mathbb{F}}^{1}(0,T)}$ such that
\begin{equation*}\label{eq 3.12}\tag{3.12}
|Y_{t}|\le |Y_{t}|+\int_{0}^{t}\theta_{s}ds\le \widetilde{C}\mathbb{E}_{t}\Big[|\xi|+\int_{0}^{T}\theta_{s}ds\Big]+\widetilde{C},\ t\in[0,T],\ \mathbb{P}\text{-}a.s.
\end{equation*}
\end{theorem}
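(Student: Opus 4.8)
The plan is to obtain the solution as a monotone limit of solutions to approximating mean-field BSDEs with bounded data, combining a truncation of the terminal value with the stability result Lemma~\ref{le 3.5} and the a priori estimate Proposition~\ref{prop 3.4}. I would organize the argument in three nested layers, following the localization scheme of \cite{BH06} and the structure of \cite[Theorem 2.1]{FH23}.

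\textbf{Layer 1 (Lipschitz mean-field BSDEs).} First I would regularize the generator. Keeping the terminal value $\xi^{N}:=(\xi\wedge N)\vee(-N)\in L^{\infty}$ and $\theta^{k}:=\theta\wedge k$ bounded, I would approximate $f$ in the $(y,z)$-variables by a sequence of globally Lipschitz generators $f_{j}$, for instance via the inf-convolution $f_{j}(t,y,z,\mu):=\inf_{(y',z')}\{f(t,y',z',\mu)+j(|y-y'|+|z-z'|)\}$ composed with a truncation in $z$ that preserves the quadratic bound of \textbf{(A3)}. Such $f_{j}$ are Lipschitz in $(y,z)$, inherit the monotonicity in $\mu$ from \textbf{(A4)}, satisfy $f_{j}\uparrow f$, and converge to $f$ locally uniformly. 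For each $j$ the equation with data $(\xi^{N},f_{j})$ is a Lipschitz mean-field BSDE whose unique square-integrable solution $(Y^{j},Z^{j})\in\mathcal{S}^{\infty}_{\mathbb{F}}\times\mathcal{H}^{2}_{\mathbb{F}}$ exists by \cite{BL09}; boundedness of $Y^{j}$ follows from a bounded-data version of the argument in Proposition~\ref{prop 3.4}.

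\textbf{Layer 2 (quadratic mean-field BSDEs, bounded data).} Using the mean-field comparison theorem at the Lipschitz level---valid precisely because of the monotonicity assumption \textbf{(A4)}---together with the monotone approximation $f_{j}\uparrow f$, I would deduce that $Y^{j}$ is nondecreasing in $j$ and uniformly bounded. The hypotheses of Lemma~\ref{le 3.5} are then met: the $Y^{j}$ converge monotonically, they are uniformly bounded, the $f_{j}$ satisfy the quadratic bound of \textbf{(A3)} uniformly in $j$, and $f_{j}\to f$ locally uniformly. Lemma~\ref{le 3.5} therefore yields strong convergence $Z^{j}\to Z^{N,k}$ in $\mathcal{H}^{2}_{\mathbb{F}}$ and identifies the limit as a bounded solution of the mean-field BSDE with data $(\xi^{N},f)$ carrying the truncated $\theta^{k}$. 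Removing the $\theta$-truncation by letting $k\to\infty$ and invoking the $\int_{0}^{\cdot}\theta_{s}\,ds$-form of the estimate in Proposition~\ref{prop 3.4} then delivers a bounded solution $(Y^{N},Z^{N})$ for the data $(\xi^{N},f)$.

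\textbf{Layer 3 ($L^{1}$ terminal value) and the main obstacle.} Finally I would remove the truncation of $\xi$ by working with the double truncation $\xi^{p,q}:=\xi^{+}\wedge p-\xi^{-}\wedge q$, so that monotone comparison produces families $Y^{p,q}$ monotone in each index, while Proposition~\ref{prop 3.4} furnishes the bound \eqref{eq 3.12} uniformly in $(p,q)$; hence $\{Y^{p,q}\}$ is uniformly of class~(D). Passing to the monotone limits gives $Y^{p,q}\to Y$ a.e.\ with $Y$ of class~(D) and, by the uniform bound, in $\mathcal{S}^{p}_{\mathbb{F}}$ for every $p\in(0,1)$, whence $W_{1}(\mathbb{P}_{Y^{p,q}_{s}},\mathbb{P}_{Y_{s}})\to0$, which together with \textbf{(A1)} controls the mean-field argument in the generator; for the martingale part I would run the standard $L^{1}$ BSDE estimates in the spirit of \cite{BD03,FH24} to show $\{Z^{p,q}\}$ is Cauchy in $\mathcal{H}^{p}_{\mathbb{F}}$ with limit $Z$, and then pass to the limit in the equation. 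The delicate point, and the step most likely to require the full strength of Proposition~\ref{prop 3.4} and the iterated-logarithmic structure of \textbf{(A2)$_{m}^{\lambda}$}, is precisely this last passage to the limit in the quadratic term $f(s,Y^{p,q}_{s},Z^{p,q}_{s},\mathbb{P}_{Y^{p,q}_{s}})$ using only $\mathcal{H}^{p}$-convergence ($p<1$) of $Z^{p,q}$ and $W_{1}$-convergence of the laws, while keeping the monotone ordering of the approximants intact through the law-dependence---which is exactly where \textbf{(A4)} is indispensable.
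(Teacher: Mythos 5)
There is a genuine gap, and it sits exactly at the two points where your plan diverges from the paper's localization scheme. First, at the end of your Layer 2 you claim that letting $k\to\infty$ in the truncation $\theta^{k}=\theta\wedge k$ ``delivers a bounded solution $(Y^{N},Z^{N})$ for the data $(\xi^{N},f)$.'' This cannot work: since $\theta$ is only in $\mathcal{L}^{1}_{\mathbb{F}}(0,T;\mathbb{R})$, the a priori bound of Proposition \ref{prop 3.4} is the conditional-expectation bound $|Y_{t}|\le \widetilde{C}\,\mathbb{E}_{t}[|\xi|+\int_{0}^{T}\theta_{s}ds]+\widetilde{C}$, which is a class-(D) bound, not an $L^{\infty}$ bound, even for bounded $\xi^{N}$. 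Consequently the hypotheses of Lemma \ref{le 3.5} (uniform boundedness $\sup_{n}|Y^{n}_{t}|\le M$) fail for this limit, and the same failure recurs in your Layer 3. The paper's proof resolves this by never detruncating $\theta$ as a separate step: it truncates the whole generator, $f^{n,k}:=f^{+}\wedge n-f^{-}\wedge k$ together with $\xi^{n,k}:=\xi^{+}\wedge n-\xi^{-}\wedge k$, and then introduces the stopping times $\sigma_{m}:=\inf\{t:\widetilde{C}\mathbb{E}_{t}[|\xi|+\int_{0}^{T}\theta_{s}ds]+\widetilde{C}\ge m\}\wedge T$, so that all approximants are uniformly bounded by $m$ on $[0,\sigma_{m}]$; Lemma \ref{le 3.5} is then applied twice (in $n$, then in $k$) to the \emph{stopped} equations with frozen-law generators $g^{n,k}(\omega,t,y,z)=\mathbf{1}_{\{t\le\sigma_{m}\}}f^{n,k}(\omega,t,y,z,\mathbb{P}_{Y^{n,k}_{t}})$, yielding $\mathcal{H}^{2}$-convergence of the $Z$'s on each $[0,\sigma_{m}]$, after which $Z$ is concatenated as $Z=Z_{1}\mathbf{1}_{[0,\sigma_{1}]}+\sum_{m\ge2}Z_{m}\mathbf{1}_{(\sigma_{m-1},\sigma_{m}]}$ using $\mathbb{P}(\cup_{m}\{\sigma_{m}=T\})=1$. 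Your Layer 3 substitute---showing $\{Z^{p,q}\}$ is Cauchy in $\mathcal{H}^{p}_{\mathbb{F}}$, $p<1$, ``in the spirit of \cite{BD03,FH24}''---is not available here: those Cauchy estimates require monotonicity or Lipschitz structure in $y$, whereas under \textbf{(A1)}--\textbf{(A4)} the generator is merely continuous (solutions need not even be unique), so no stability estimate controls $\widehat{Z}$ from $\widehat{Y}$. This is precisely why the paper routes all $Z$-convergence through the monotone-stability Lemma \ref{le 3.5} on stochastic intervals rather than through $L^{1}$-type contraction estimates.

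A secondary but real defect is in your Layer 1: the inf-convolution $f_{j}$ is Lipschitz only in $(y,z)$, while its dependence on $\mu$ retains merely the modulus $\rho$ of \textbf{(A1)}; hence \cite{BL09}, which requires Lipschitz continuity in the law, does not apply to $(\xi^{N},f_{j})$. (Also, without a prior truncation the infimum over $z'$ is $-\infty$ because \textbf{(A3)} allows $f\sim-\gamma_{0}|z'|^{2}$; your parenthetical truncation would need to be made compatible with \textbf{(A4)} and with $f_{j}\uparrow f$.) The paper avoids building the bounded-data existence theory from scratch by invoking \cite[Theorem 4.1]{LX22} for a \emph{maximal} bounded solution under continuity plus \textbf{(A4)}, and the associated comparison theorem \cite[Theorem 4.2, Remark 4.3]{LX22} for the monotone ordering in $n$ and $k$---the comparison you assume ``at the Lipschitz level'' is exactly what is unavailable for your non-Lipschitz-in-$\mu$ approximants. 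Your Step for $(Y,Z)\in\mathcal{S}^{p}\times\mathcal{H}^{p}$, $p\in(0,1)$, is consistent with the paper's Step 2 and is fine once the construction is repaired.
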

\begin{proof}  \noindent \textbf{Step 1.}\ We implement the localization procedure, as outlined in \cite[Theorem 2]{BH06}, to construct a solution. For all $n,k\ge1$ and $(\omega,t,y,z,\mu)\in\Omega\times[0,T]\times\mathbb{R}\times\mathbb{R}^{d}\times\mathcal{P}_{1}(\mathbb{R})$, we set
\begin{equation*}
\xi^{n,k}:=\xi^{+}\wedge n-\xi^{-}\wedge k\ \ \text{and}\ \ f^{n,k}(\omega,t,y,z,\mu):=f^{+}(\omega,t,y,z,\mu)\wedge n-f^{-}(\omega,t,y,z,\mu)\wedge k.
\end{equation*}

\noindent Obviously, for all $(\omega,t,y,z,\mu)\in\Omega\times[0,T]\times\mathbb{R}\times\mathbb{R}^{d}\times\mathcal{P}_{1}(\mathbb{R})$,
$$|\xi^{n,k}|\le|\xi|\wedge(n\vee k)\ \ \text{and}\ \ |f^{n,k}(\omega,t,y,z,\mu)|\le|f(\omega,t,y,z,\mu)|\wedge(n\vee k),$$
and the generator $f^{n,k}$ satisfies assumptions \textbf{(A1)}, \textbf{(A2)$_{m}^{\lambda}$} and \textbf{(A3)} with $\theta$ being replaced by $\theta\wedge(n\vee k),$ for all $n,k\ge1$.

Simultaneously, based on the definition of $f^{n,k}$ and the fact that $f$ satisfies assumption \textbf{(A4)}, it is not difficult to verify that, for all $y\in\mathbb{R}$, $z\in\mathbb{R}^{d}$, and $\eta,\ \bar{\eta}\in L^{1}_{\mathcal{F}}(\Omega;\mathbb{R}),$
\begin{equation*}
f^{n,k}(\omega,t,y,z,\mathbb{P}_{\eta})\le f^{n,k}(\omega,t,y,z,\mathbb{P}_{\bar{\eta}}),\ d\mathbb{P}\times dt\text{-}a.e.,\ \ \text{whenever}\ \ \eta\le\bar{\eta},\ \mathbb{P}\text{-}a.s.
\end{equation*}
This implies that the generator $f^{n,k}$ also satisfies assumption \textbf{(A4)}. Hence, as $\xi^{n,k}\ \text{and}\ f^{n,k}$ are bounded, by \cite[Theorem 4.1]{LX22} we obtain that the following mean-field BSDE \eqref{eq 3.13} has a maximal bounded solution $(Y^{n,k},Z^{n,k})\in \mathcal{S}_{\mathbb{F}}^{\infty}(0,T;\mathbb{R})\times \mathcal{H}_{\mathbb{F}}^{2}(0,T;\mathbb{R}^{d})$:
\begin{equation*}\label{eq 3.13}\tag{3.13}
\begin{aligned}
Y_{t}^{n,k}=\xi^{n,k}+\int_{t}^{T}f^{n,k}(s,Y_{s}^{n,k},Z_{s}^{n,k},\mathbb{P}_{Y_{s}^{n,k}})ds-\int_{t}^{T}Z_{s}^{n,k}dW_{s},\ t\in[0,T].
\end{aligned}
\end{equation*}

\noindent The comparison theorem (cf. \cite[Theorem 4.2, Remark 4.3]{LX22}) allows to conclude that $(Y_{t}^{n,k})_{t\in[0,T]}$ is nondecreasing in $n$ and non-increasing in $k$. Furthermore, it follows from Proposition \ref{prop 3.4} that there exists a constant $\widetilde{C}>0$ depending only on $m,\lambda,\gamma,K,T,\Vert\xi\Vert_{L^{1}(\Omega)}$ and $\Vert\theta\Vert_{\mathcal{L}_{\mathbb{F}}^{1}(0,T)}$ but not on $n,k$, such that
\begin{equation*}\label{eq 3.14}\tag{3.14}
\begin{aligned}
|Y_{t}^{n,k}|&\le |Y_{t}^{n,k}|+\int_{0}^{t}[\theta_{s}\wedge (n \vee k)]ds\le\widetilde{C}\mathbb{E}_{t}\Big[|\xi^{n,k}|+\int_{0}^{T}[\theta_{s}\wedge (n \vee k)]ds\Big]+\widetilde{C}\\
&\le \widetilde{C}\mathbb{E}_{t}\Big[|\xi|+\int_{0}^{T}\theta_{s}ds\Big]+\widetilde{C},\ t\in[0,T],\ \mathbb{P}\text{-}a.s.
\end{aligned}
\end{equation*}

%\noindent Then by employing the localization technique initially proposed in \cite{BH06}, and considering \eqref{eq 3.12} together with the assumptions \textbf{(A1)} and \textbf{(A3)}, we can deduce the existence of an $\mathbb{F}$-progressively measurable process $Z$, such that $(Y=(Y_{t}:=\inf_{k}\sup_{n}Y_{t}^{n,k}),Z)$ serves as a solution to mean-field BSDE \eqref{eq 1.2}.

\noindent For $m\ge 1$, we define the following stopping time:
\begin{equation*}
\sigma_{m}:=\inf\Big\{t\in[0,T]:\widetilde{C}\mathbb{E}_{t}\Big[|\xi|+\int_{0}^{T}\theta_{s}ds\Big]+\widetilde{C}\ge m\Big\}\wedge T.
\end{equation*}

\noindent Then, $(Y^{n,k}_{m}(t),Z^{n,k}_{m}(t))_{t\in[0,T]}:=(Y^{n,k}_{t\wedge \sigma_{m}},Z^{n,k}_{t}{\mathbf{1}}_{[0,\sigma_{m}]}(t))_{t\in[0,T]}\in\mathcal{S}_{\mathbb{F}}^{\infty}(0,T;\mathbb{R})\times \mathcal{H}_{\mathbb{F}}^{2}(0,T;\mathbb{R}^{d})$ solves the following BSDE:
\begin{equation*}
Y^{n,k}_{m}(t)=Y^{n,k}_{\sigma_{m}}+\int_{t}^{T}{\mathbf{1}}_{\{s\le\sigma_{m}\}}f^{n,k}(s,Y^{n,k}_{m}(s),Z^{n,k}_{m}(s),\mathbb{P}_{Y^{n,k}_{s}})ds-\int_{t}^{T}Z^{n,k}_{m}(s)dW_{s},\ t\in[0,T].
\end{equation*}

\noindent Obviously, it can be observed that for all $n,k\ge1$, $(Y^{n,k}_{m}(t))_{t\in[0,T]}$ still increases with respect to $n$ and decreases with respect to $k$. For $k\ge1\ \text{and}\ t\in[0,T]$, we define
\begin{equation*}\label{eq 3.15}\tag{3.15}
\begin{aligned}
Y^{k}_{t}:=\sup\limits_{n\ge1}Y_{t}^{n,k}=\lim\limits_{n\rightarrow+\infty}Y_{t}^{n,k},\ \ \text{and}\ \ Y^{k}_{m}(t):=Y^{k}_{t\wedge\sigma_{m}}=\lim\limits_{n\rightarrow+\infty}Y^{n,k}_{m}(t).
\end{aligned}
\end{equation*}

\noindent It follows from \eqref{eq 3.14} that, for $t\in[0,T]$ and $n,k\ge1$,
\begin{equation*}\label{eq 3.16}\tag{3.16}
\begin{aligned}
\mathbb{E}[|Y_{t}^{n,k}|]\le \widetilde{C}\mathbb{E}\Big[|\xi|+\int_{0}^{T}\theta_{s}ds\Big]+\widetilde{C}=:\overline{C},
\end{aligned}
\end{equation*}

\noindent and from Fatou's lemma, also $\mathbb{E}[|Y_{t}^{k}|]\le\overline{C},\ t\in[0,T],\ k\ge1$. Simultaneously, due to \eqref{eq 3.14}--\eqref{eq 3.15} and Lebesgue's dominated convergence theorem, we obtain
\begin{equation*}\label{eq 3.17}\tag{3.17}
\begin{aligned}
\lim\limits_{n\rightarrow+\infty}\mathbb{E}[|Y_{t}^{n,k}-Y_{t}^{k}|]=0,\ t\in[0,T],\ k\ge1.
\end{aligned}
\end{equation*}

\noindent Moreover, according to the definition of $\sigma_{m}$ and \eqref{eq 3.15},
\begin{equation*}\label{eq 3.18}\tag{3.18}
\begin{aligned}
|Y_{m}^{n,k}(t)|\le m,\ \ \text{and}\ \ |Y^{k}_{m}(t)|\le m,\ t\in[0,T],\ n,k\ge1,\ m\ge1.
\end{aligned}
\end{equation*}

\noindent Then, for all $(\omega,t,y,z)\in\Omega\times[0,T]\times\mathbb{R}\times\mathbb{R}^{d}$, we define the driving coefficients
\begin{equation*}
\begin{aligned}
g^{n,k}(\omega,t,y,z):={\mathbf{1}}_{\{t\le\sigma_{m}(\omega)\}}f^{n,k}(\omega,t,y,z,\mathbb{P}_{Y^{n,k}_{t}}),\ \text{and}\ g^{k}(\omega,t,y,z):={\mathbf{1}}_{\{t\le\sigma_{m}(\omega)\}}f^{k}(\omega,t,y,z,\mathbb{P}_{Y^{k}_{t}}),
\end{aligned}
\end{equation*}
where $f^{k}:=f\vee(-k)=f^{+}-(f^{-}\wedge k)$. Then, thanks to our assumptions on $f$ and \eqref{eq 3.17}, we have for all $(y,z)\in[-m,m]\times \mathbb{R}^{d}$,
\begin{equation*}
\begin{aligned}
\sup\limits_{n\ge1}|g^{n,k}(\omega,t,y,z)|\le \theta_{t}(\omega)+\psi(m\vee\overline{C})+\gamma_{0}|z|^{2},\ d\mathbb{P}\times dt\text{-}a.e.
\end{aligned}
\end{equation*}
and
\begin{equation*}
\begin{aligned}
&|g^{n,k}(\omega,t,y_{n},z_{n})-g^{k}(\omega,t,y,z)|\le |f^{n,k}(\omega,t,y_{n},z_{n},\mathbb{P}_{Y^{k}_{t}})-f^{k}(\omega,t,y,z,\mathbb{P}_{Y^{k}_{t}})|\\
%&\le |f^{n,k}(\omega,t,y_{n},z_{n},\mathbb{P}_{Y^{k}_{t}})-f^{k}(\omega,t,y,z,\mathbb{P}_{Y^{k}_{t}})|+\rho\big(W_{1}(\mathbb{P}_{Y^{n,k}_{t}},\mathbb{P}_{Y^{k}_{t}})\big)\\
&\hspace{8em}+\rho\big(\mathbb{E}[|Y_{t}^{n,k}-Y_{t}^{k}|]\big)\rightarrow0,\ \ \text{as}\ (y_{n},z_{n})\rightarrow(y,z),\ n\rightarrow+\infty.\\
\end{aligned}
\end{equation*}

\noindent Consequently, from Lemma \ref{le 3.5} we get that there exists $Z_{m}^{k}\in\mathcal{H}_{\mathbb{F}}^{2}(0,T;\mathbb{R}^{d})$ such that $\mathbb{E}\big[\int_{0}^{T}|Z_{m}^{n,k}(s)-Z_{m}^{k}(s)|^{2}ds\big]\rightarrow0,\ n\rightarrow+\infty,$ and $(Y_{m}^{k},Z_{m}^{k})\in\mathcal{S}_{\mathbb{F}}^{\infty}(0,T;\mathbb{R})\times \mathcal{H}_{\mathbb{F}}^{2}(0,T;\mathbb{R}^{d})$ is a solution of the BSDE
\begin{equation*}
\begin{aligned}
Y_{m}^{k}(t)=Y^{k}_{\sigma_{m}}+\int_{t}^{T}{\mathbf{1}}_{\{s\le\sigma_{m}\}}f^{k}(s,Y^{k}_{m}(s),Z^{k}_{m}(s),\mathbb{P}_{Y^{k}_{s}})ds-\int_{t}^{T}Z_{m}^{k}(s)dW_{s},\ t\in[0,T].
\end{aligned}
\end{equation*}

\noindent Thanks to \eqref{eq 3.18} we can now apply the same argument for $Y_{t}:=\inf_{k\ge1}Y_{t}^{k}=\lim_{k\rightarrow+\infty}Y_{t}^{k}$, and $Y_{m}(t):=Y_{t\wedge\sigma_{m}}=\lim_{k\rightarrow+\infty}Y_{m}^{k}(t),\ t\in[0,T]$ and for the associated generator $g(\omega,t,y,z):={\mathbf{1}}_{\{t\le\sigma_{m}(\omega)\}}f(\omega,t,y,z,\mathbb{P}_{Y_{t}}),\ (\omega,t,y,z)\in\Omega\times[0,T]\times\mathbb{R}\times\mathbb{R}^{d}$ (note that Fatou's lemma extends \eqref{eq 3.16} to $Y^{k}$ and $Y$), and conclude that there is some $Z_{m}\in\mathcal{H}_{\mathbb{F}}^{2}(0,T;\mathbb{R}^{d})$ such that $\mathbb{E}\big[\int_{0}^{T}|Z_{m}^{k}(s)-Z_{m}(s)|^{2}ds\big]\rightarrow0,\ k\rightarrow+\infty,$ and $(Y_{m},Z_{m})\in\mathcal{S}_{\mathbb{F}}^{\infty}(0,T;\mathbb{R})\times \mathcal{H}_{\mathbb{F}}^{2}(0,T;\mathbb{R}^{d})$ solves
\begin{equation*}\label{eq 3.19}\tag{3.19}
\begin{aligned}
Y_{m}(t)=Y_{\sigma_{m}}+\int_{t}^{T}{\mathbf{1}}_{\{s\le\sigma_{m}\}}f(s,Y_{m}(s),Z_{m}(s),\mathbb{P}_{Y_{s}})ds-\int_{t}^{T}Z_{m}(s)dW_{s},\ t\in[0,T].
\end{aligned}
\end{equation*}

\noindent Recall that
\begin{equation*}
\begin{aligned}
Z_{m}^{n,k}(=Z^{n,k}\mathbf{1}_{[0,\sigma_{m}]})\ \mathop{\longrightarrow}\limits_{n\rightarrow+\infty}^{\mathcal{H}_{\mathbb{F}}^{2}}Z_{m}^{k}\ \mathop{\longrightarrow}\limits_{k\rightarrow+\infty}^{\mathcal{H}_{\mathbb{F}}^{2}}\ Z_{m},\ m\ge1.
\end{aligned}
\end{equation*}

\noindent Hence, as $\sigma_{m}\le \sigma_{m+1}\le T,\ m\ge1$, $Z_{m+1}^{n,k}{\mathbf{1}}_{[0,\sigma_{m}]}=Z_{m}^{n,k}$ and $Z_{m+1}{\mathbf{1}}_{[0,\sigma_{m}]}=Z_{m},\ m\ge1$. Thus, the fact that $\mathbb{P}\big(\mathop{\cup}\limits_{m\ge1}\{\sigma_{m}=T\}\big)=1$, allows to define the $\mathbb{F}$-progressively measurable process
\begin{equation*}
Z_{t}:=Z_{1}(t)\mathbf{1}_{[0,\sigma_{1}]}(t)+\sum\limits_{m\ge2}Z_{m}(t)\mathbf{1}_{(\sigma_{m-1},\sigma_{m}]}(t),\ t\in[0,T],
\end{equation*}
and $\mathbb{E}\big[\int_{0}^{\sigma_{m}}|Z_{s}|^{2}ds\big]=\mathbb{E}\big[\int_{0}^{T}|Z_{m}(s)|^{2}ds\big]<+\infty,\ m\ge1.$ Therefore, $\int_{0}^{T}|Z_{s}|^{2}ds<+\infty,\ \mathbb{P}\text{-}a.s.$ Moreover, form \eqref{eq 3.19}, as $Y_{T}=\xi$,
\begin{equation*}
\begin{aligned}
Y_{t}=\xi+\int_{t}^{T}f(s,Y_{s},Z_{s},\mathbb{P}_{Y_{s}})ds-\int_{t}^{T}Z_{s}dW_s,\ t\in[0,T],
\end{aligned}
\end{equation*}
and from \eqref{eq 3.11} obtained by taking the limits in \eqref{eq 3.14} we show that $Y$ is of the class (D).

\noindent \textbf{Step 2.}\ It remains to prove that $(Y,Z)\in\mathcal{S}^{p}_{\mathbb{F}}(0,T;\mathbb{R})\times \mathcal{H}^{p}_{\mathbb{F}}(0,T;\mathbb{R}^{d})$, for all $p\in(0,1)$. In view of \eqref{eq 3.11}, we deduce from \cite[Lemma 6.1]{BD03} that, for all $p\in(0,1)$,
\begin{equation*}
\begin{aligned}
\mathbb{E}\big[\sup\limits_{t\in[0,T]}|Y_{t}|^{p}\big]\le\mathbb{E}\Big[\sup\limits_{t\in[0,T]}\Big(|Y_{t}|+\int_{0}^{t}\theta_{s}ds\Big)^{p}\Big]\le (2\widetilde{C})^{p}\bigg[\frac{1}{1-p}\Big(\mathbb{E}\Big[|\xi|+\int_{0}^{T}\theta_{s}ds\Big]\Big)^{p}+1\bigg]<+\infty.
\end{aligned}
\end{equation*}

\noindent This implies that $Y\in\mathcal{S}^{p}_{\mathbb{F}}(0,T;\mathbb{R})$, for all $p\in(0,1)$.

It remains to show that $Z\in\mathcal{H}_{\mathbb{F}}^{p}(0,T;\mathbb{R}^{d})$, for all $p\in(0,1)$. For all $n\ge1$, we introduce the stopping time: $\tau_{n}:=\inf\big\{t\in[0,T]:\int_{0}^{t}|Z_{s}|^{2}ds\ge n\big\}\wedge T.$ Applying It\^{o}'s formula to $|Y_{t}|^{2}$ on $[0,\tau_{n}]$ yields
\begin{equation*}\label{eq 3.20}\tag{3.20}
|Y_{0}|^{2}+\int_{0}^{\tau_{n}}|Z_{s}|^{2}ds=|Y_{\tau_{n}}|^{2}+2\int_{0}^{\tau_{n}}Y_{s}f(s,Y_{s},Z_{s},\mathbb{P}_{Y_{s}})ds-2\int_{0}^{\tau_{n}}Y_{s}Z_{s}dW_{s}.
\end{equation*}

\noindent Then, due to assumption \textbf{(A2)$_{m}^{\lambda}$} and Young's inequality, we see that, $d\mathbb{P}\times dt\text{-}a.e.$,
\begin{equation*}
\begin{aligned}
2Y_{s}f(s,Y_{s},Z_{s},\mathbb{P}_{Y_{s}})%&\le 2\theta_{s}|Y_{s}|+2K|Y_{s}|^{2}+2K|Y_{s}|W_{1}(\mathbb{P}_{Y_{s}},\delta_{0})+\frac{2\gamma|Y_{s}||Z_{s}|}{\mathcal{IL}^{\lambda}_{m}(|Z_{s}|)}\\
%&\le 2\theta_{s}|Y_{s}|+2K|Y_{s}|^{2}+2K|Y_{s}|W_{1}(\mathbb{P}_{Y_{s}},\delta_{0})+2\gamma|Y_{s}||Z_{s}|\\
\le 2\theta_{s}|Y_{s}|+(2K+2\gamma^{2})|Y_{s}|^{2}+2K|Y_{s}|W_{1}(\mathbb{P}_{Y_{s}},\delta_{0})+\frac{1}{2}|Z_{s}|^{2}.
\end{aligned}
\end{equation*}

\noindent Hence, setting $Y_{T}^{\ast}:=\sup\limits_{s\in[0,T]}|Y_{s}|$, \eqref{eq 3.20} yields
\begin{equation*}
\begin{aligned}
\int_{0}^{\tau_{n}}|Z_{s}|^{2}ds&\le(4KT+4\gamma^{2}T+2)|Y_{T}^{\ast}|^{2}+4Y_{T}^{\ast}\int_{0}^{T}\theta_{s}ds\\
&\hspace{2em}+4KY_{T}^{\ast}\int_{0}^{T}\mathbb{E}[|Y_{s}|]ds+4\sup\limits_{t\in[0,\tau_{n}]}\Big|\int_{0}^{t}Y_{s}Z_{s}dW_{s}\Big|.
\end{aligned}
\end{equation*}

\noindent Furthermore, for all $p\in(0,1)$, by taking the $\frac{p}{2}$-th power of both sides of the above inequality and then the expectation, we see that there exists $c_{p}>0$ depending only on $p,\gamma,K,T$ such that
\begin{equation*}\label{eq 3.21}\tag{3.21}
\begin{aligned}
\mathbb{E}\Big[\Big(\int_{0}^{\tau_{n}}|Z_{s}|^{2}ds\Big)^{\frac{p}{2}}\Big]&\le c_{p}\bigg(\mathbb{E}[|Y_{T}^{\ast}|^{p}]+\mathbb{E}\Big[|Y_{T}^{\ast}|^{\frac{p}{2}}\Big(\int_{0}^{T}\theta_{s}ds\Big)^{\frac{p}{2}}\Big]\\
&\hspace{2em}+\mathbb{E}\Big[|Y_{T}^{\ast}|^{\frac{p}{2}}\Big(\int_{0}^{T}\mathbb{E}[|Y_{s}|]ds\Big)^{\frac{p}{2}}\Big]+\mathbb{E}\Big[\sup\limits_{t\in[0,\tau_{n}]}\Big|\int_{0}^{t}Y_{s}Z_{s}dW_{s}\Big|^{\frac{p}{2}}\Big]\bigg).
\end{aligned}
\end{equation*}

\noindent Moreover, it follows from Burkholder-Davis-Gundy's inequality and Young's inequality that
\begin{equation*}
\begin{aligned}
c_{p}\mathbb{E}\Big[\sup\limits_{t\in[0,\tau_{n}]}\Big|\int_{0}^{t}Y_{s}Z_{s}dW_{s}\Big|^{\frac{p}{2}}\Big]\le\frac{9}{2}c_{p}^{2}\mathbb{E}[|Y_{T}^{\ast}|^{p}]+\frac{1}{2}\mathbb{E}\Big[\Big(\int_{0}^{\tau_{n}}|Z_{s}|^{2}ds\Big)^{\frac{p}{2}}\Big].
\end{aligned}
\end{equation*}

\noindent Plugging the latter inequality into \eqref{eq 3.21} and noting that $\sup_{t\in[0,T]}\mathbb{E}[|Y_{t}|]<+\infty$, then for all $p\in(0,1)$, we get from H\"{o}lder's inequality that there exists a suitable constant $d_{p}>0$ such that
\begin{equation*}\label{eq 3.22}\tag{3.22}
\begin{aligned}
\mathbb{E}\Big[\Big(\int_{0}^{\tau_{n}}|Z_{s}|^{2}ds\Big)^{\frac{p}{2}}\Big]\le d_{p}\Big(\mathbb{E}[|Y_{T}^{\ast}|^{p}]+\big(\mathbb{E}[|Y_{T}^{\ast}|^{p}]\big)^{\frac{1}{2}}+\big(\mathbb{E}[|Y_{T}^{\ast}|^{p}]\big)^{\frac{1}{2}}\cdot\Big(\mathbb{E}\Big[\int_{0}^{T}\theta_{s}ds\Big]\Big)^{\frac{p}{2}}\Big).
\end{aligned}
\end{equation*}

\noindent Since $Y\in\mathcal{S}^{p}_{\mathbb{F}}(0,T;\mathbb{R})$, for all $p\in(0,1)$ and $\theta\in\mathcal{L}^{1}_{\mathbb{F}}(0,T;\mathbb{R})$, by letting $n$ tend to infinity in \eqref{eq 3.22} and making use of Fatou's lemma, we conclude that $Z\in\mathcal{H}_{\mathbb{F}}^{p}(0,T;\mathbb{R}^{d})$, for all $p\in(0,1)$.
Consequently, the proof of the existence theorem is now complete.
\end{proof}

The uniqueness of the $L^{1}$ solution to the one-dimensional mean-field BSDE \eqref{eq 1.2} is a direct consequence of the following Theorem \ref{th 3.7}.

\begin{theorem}\label{th 3.7}
(Comparison theorem) Suppose that $\xi_{1},\ \xi_{2}\in L^{1}_{\mathcal{F}_{T}}(\Omega;\mathbb{R})$ and $f^{1}$ (resp., $f^{2}$) satisfies assumptions \textbf{(A5)}, \textbf{(A6)$_{m}^{\lambda}$} and \textbf{(A7)} with some $m\ge2\ \text{and}\ \lambda>\frac{1}{2}$. We denote by $(Y^{1},Z^{1})$ and $(Y^{2},Z^{2})$ the solutions of the mean-field BSDE \eqref{eq 1.2} with parameters $(\xi_{1},f^{1})$ and $(\xi_{2},f^{2})$, resp., such that $(Y^{1}-Y^{2})^{+}$ is of the class (D). If $\xi_{1}\le\xi_{2},\ \mathbb{P}\text{-}a.s.$, and $d\mathbb{P}\times dt\text{-}a.e.$,
\begin{equation*}
\begin{aligned}
&{\bf{1}}_{\{Y_{t}^{1}> Y_{t}^{2}\}}\big(f^{1}(t,Y_{t}^{2},Z_{t}^{2},\mathbb{P}_{Y_{t}^{2}})-f^{2}(t,Y_{t}^{2},Z_{t}^{2},\mathbb{P}_{Y_{t}^{2}})\big)\le0\\
\big(\text{resp.,\ }&{\bf{1}}_{\{Y_{t}^{1}> Y_{t}^{2}\}}\big(f^{1}(t,Y_{t}^{1},Z_{t}^{1},\mathbb{P}_{Y_{t}^{1}})-f^{2}(t,Y_{t}^{1},Z_{t}^{1},\mathbb{P}_{Y_{t}^{1}})\big)\le0\big),
\end{aligned}
\end{equation*}
then it holds that $Y_{t}^{1}\le Y_{t}^{2}$, $t\in[0,T],\ \mathbb{P}\text{-}a.s.$
\end{theorem}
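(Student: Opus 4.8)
The plan is to show that $(\hat{Y}_t)^+\equiv0$ where $\hat{Y}:=Y^1-Y^2$, which is precisely $Y^1_t\le Y^2_t$. Since $(\hat{Y})^+$ is of class (D), it is enough to prove that $v(t):=\mathbb{E}[(\hat{Y}_t)^+]$ vanishes for every $t\in[0,T]$. Writing $\hat{Z}:=Z^1-Z^2$, the pair $(\hat{Y},\hat{Z})$ solves the BSDE with terminal value $\xi_1-\xi_2\le0$ and driver $f^1(s,Y^1_s,Z^1_s,\mathbb{P}_{Y^1_s})-f^2(s,Y^2_s,Z^2_s,\mathbb{P}_{Y^2_s})$. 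First I would decompose this difference on $\{\hat{Y}_s>0\}=\{Y^1_s>Y^2_s\}$ by inserting the intermediate values $f^1(s,Y^2_s,Z^1_s,\mathbb{P}_{Y^1_s})$, $f^1(s,Y^2_s,Z^2_s,\mathbb{P}_{Y^1_s})$ and $f^1(s,Y^2_s,Z^2_s,\mathbb{P}_{Y^2_s})$, so that the four increments are bounded, respectively, by the extended monotonicity \textbf{(A5)} as $\kappa((\hat{Y}_s)^+)$, by the iterated-logarithmic uniform continuity \textbf{(A6)$_m^\lambda$} as $\zeta\big(|\hat{Z}_s|/\mathcal{IL}_m^\lambda(|\hat{Z}_s|)\big)$, by the strengthened nondecreasing condition \textbf{(A7)} as $L\,\mathbb{E}[(\hat{Y}_s)^+]$, and by the cross-condition hypothesis, which is nonpositive on $\{Y^1_s>Y^2_s\}$. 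The ``resp.'' case is entirely symmetric: one peels the difference off at $(Y^1_s,Z^1_s,\mathbb{P}_{Y^1_s})$ and uses the corresponding assumptions on $f^2$.

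The technical core is to absorb the $z$-increment, which is merely iterated-logarithmically sub-linear in $\hat{Z}_s$ and therefore not integrable enough to be handled directly for an $L^1$ solution. Here I would invoke the test-function device of Proposition \ref{prop 3.4}. Because $\zeta$ has linear growth with $\zeta(0)=0$, Lemma \ref{le 2.3} gives, for any prescribed $\varepsilon>0$, a slope $\gamma$ with $\zeta\big(|z|/\mathcal{IL}_m^\lambda(|z|)\big)\le\gamma\,|z|/\mathcal{IL}_m^\lambda(|z|)+\varepsilon$. I would then apply It\^o--Tanaka's formula to $\Phi(t,(\hat{Y}_t)^+)$, with $\Phi$ a test function of the type \eqref{eq 3.1} (built from this $\gamma$, with the linear constant $K=0$ since the $y$-dependence is now carried entirely by $\kappa$), normalized so that $\Phi(t,0)=0$. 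The defining inequality \eqref{eq 3.3} absorbs the term $\gamma\,|\hat{Z}_s|/\mathcal{IL}_{m,h}^\lambda(|\hat{Z}_s|)$ against the second-order It\^o term $\tfrac12\partial_{xx}\Phi\,|\hat{Z}_s|^2$ and the time derivative $\partial_t\Phi$, while the local-time contribution has a favourable sign and is dropped. Localizing by the stopping times that truncate the stochastic integral and then passing to the class-(D) limit, as in Proposition \ref{prop 3.4}, eliminates the martingale part and, thanks to $\Phi(T,0)=0$ together with $(\hat{Y}_T)^+=0$, leaves no terminal residue.

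Taking expectations and using $0<\partial_x\Phi\le Q_0$ together with the concavity of $\kappa$ and Jensen's inequality to move the expectation inside $\kappa$, I expect to arrive at a backward integral inequality of Bihari type,
\begin{equation*}
v(t)\le C\int_t^T\big(\kappa(v(s))+L\,v(s)\big)\,ds+C\varepsilon,\qquad t\in[0,T].
\end{equation*}
The combined kernel $\widetilde{\kappa}(u):=C(\kappa(u)+Lu)$ is continuous, nondecreasing and vanishes at $0$; moreover, since the concavity of $\kappa$ makes $u\mapsto\kappa(u)/u$ nonincreasing, the linear part $Lu$ is dominated by a multiple of $\kappa(u)$ near the origin, so the Osgood divergence $\int_{0^+}du/\widetilde{\kappa}(u)=+\infty$ is preserved. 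Letting $\varepsilon\downarrow0$ and then invoking the backward Bihari inequality (Lemma \ref{le 2.4}) with $v_0=0$ forces $v\equiv0$, that is, $Y^1_t\le Y^2_t$ for all $t\in[0,T]$, $\mathbb{P}$-a.s.

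The step I expect to be the main obstacle is exactly this $z$-absorption in the comparison (as opposed to the a priori bound of Proposition \ref{prop 3.4}): the test function \eqref{eq 3.1} carries the additive offset $h$ and does not vanish at $x=0$, whereas reaching $v\equiv0$ rather than a mere bound requires a normalization with $\Phi(t,0)=0$ for which the key differential inequality \eqref{eq 3.3} must be re-established for $x>0$ (the positive term $\partial_t\Phi$ now compensating the first-order $z$-term for small $\hat{Z}_s$). Controlling the residual $\varepsilon$ of Lemma \ref{le 2.3} through the expectation step so that it genuinely tends to $0$ is the other delicate point; by contrast, the mean-field coupling is mild, since the law-dependence is governed by \textbf{(A7)} and only adds the linear term $L\,v(s)$, which is swallowed by the Osgood kernel.
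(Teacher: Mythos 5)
Your driver decomposition, the use of \textbf{(A5)}, \textbf{(A6)$_{m}^{\lambda}$}, \textbf{(A7)} and the cross-condition, and the final Bihari step all match the paper, but the mechanism you propose for absorbing the $z$-increment is exactly where the argument breaks, and it breaks in a way that cannot be repaired within the test-function framework. First, any test function normalized so that $\Phi(t,0)=0$ for all $t$ automatically has $\partial_{t}\Phi(t,0)=0$; on the other hand, the analogue of \eqref{eq 3.3} at $x=0$ would require $\partial_{t}\Phi(t,0)\ge \sup_{z}\big[\partial_{x}\Phi(t,0)\,\gamma|z|/\mathcal{IL}_{m,h}^{\lambda}(|z|)-\tfrac{1}{2}\partial_{xx}\Phi(t,0)|z|^{2}\big]$, and this supremum is strictly positive because $\mathcal{IL}_{m,h}^{\lambda}(0)$ is a finite constant, so near $z=0$ the first-order term is genuinely linear in $|z|$ and dominates the quadratic one (and $\partial_{x}\Phi(t,0)>0$ is indispensable for the It\^{o}--Tanaka argument). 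Since the comparison must control $\widehat{Y}^{+}$ precisely near $0$, the inequality fails exactly where you need it. Second, even with the unnormalized $\Phi$ of \eqref{eq 3.1}, your residual cannot be sent to zero: Lemma \ref{le 2.3} yields $\zeta(u)\le\gamma u+\varepsilon_{\gamma}$ only with $\varepsilon_{\gamma}=\zeta(2J/\gamma)$ and $\gamma\to+\infty$, while every constant of the test function (the bound $Q$ on $\partial_{x}\Phi$, and the threshold $h$) depends on $\gamma$ through $\exp\{2(J+\frac{2\gamma^{2}}{2\lambda-1})T\}$ and through $h=h(m,\lambda,\gamma)$; in the resulting inequality $v(t)\le C_{\gamma}\int_{t}^{T}\big(\kappa(v(s))+Lv(s)\big)ds+C_{\gamma}\varepsilon_{\gamma}$ the product $C_{\gamma}\varepsilon_{\gamma}$ typically diverges (e.g. $\varepsilon_{\gamma}$ polynomial in $1/\gamma$ against $C_{\gamma}\sim e^{c\gamma^{2}}$), so letting $\varepsilon\downarrow0$ does not give $v_{0}=0$ in Lemma \ref{le 2.4}. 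You flagged both points as "delicate"; in fact they are fatal, which is why the test-function device yields a priori bounds with harmless additive constants (as in Proposition \ref{prop 3.4}) but never the sharp conclusion $v\equiv0$.

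The paper's proof avoids both problems through a two-step structure that your plan skips, the missing idea being a Girsanov change of measure. Step 1 uses the test function (with $K,\gamma$ replaced by the single linear-growth constant $J$ of \eqref{eq 3.24}, and a fixed slope, so no blow-up) only to prove that $(Y^{1}-Y^{2})^{+}$ is \emph{bounded} (\eqref{eq 3.25}--\eqref{eq 3.33}), a statement for which additive residuals like $Q_{1}h$ are harmless. Step 2 then linearizes $\zeta$ via Lemma \ref{le 2.3} as you do, but the linear term $(m+2J)|\widehat{Z}_{s}|$ is removed \emph{exactly} by passing to the measure $\mathbb{P}^{m}$ defined from the bounded drift $b^{m}$ in \eqref{eq 3.37} (your localizing times must in any case truncate $\int_{0}^{t}(|Z_{s}^{1}|^{2}+|Z_{s}^{2}|^{2})ds$ for this to make sense): since the drift is eliminated rather than dominated, no $\gamma$-dependent constant enters, the residual $a_{m}=\zeta\big(\frac{2J}{m+2J}\big)T\to0$ freely, boundedness from Step 1 justifies the dominated-convergence passage to \eqref{eq 3.38}, and the Bihari argument is run on $\Vert\widehat{Y}_{t}^{+}\Vert_{\infty}$ rather than on $\mathbb{E}[\widehat{Y}_{t}^{+}]$ — which also neatly handles the fact that the mean-field term $L\mathbb{E}[\widehat{Y}_{s}^{+}]$ from \textbf{(A7)} is an expectation under $\mathbb{P}$ while the conditional expectations are taken under $\mathbb{P}^{m}$, a measure mismatch your expectation-based Bihari inequality would also have to confront if a Girsanov step were added.
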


\begin{proof}
Without loss of generality, we assume that the generator $f^{1}$ satisfies assumptions \textbf{(A5)}, \textbf{(A6)$_{m}^{\lambda}$} and \textbf{(A7)} with some $m\ge2\ \text{and}\ \lambda>\frac{1}{2}$, and $d\mathbb{P}\times dt\text{-}a.e.$,
\begin{equation*}\label{eq 3.23}\tag{3.23}
\begin{aligned}
{\bf{1}}_{\{Y_{t}^{1}> Y_{t}^{2}\}}\big(f^{1}(t,Y_{t}^{2},Z_{t}^{2},\mathbb{P}_{Y_{t}^{2}})-f^{2}(t,Y_{t}^{2},Z_{t}^{2},\mathbb{P}_{Y_{t}^{2}})\big)\le0.\\
\end{aligned}
\end{equation*}

\noindent We remark that, due to \textbf{(A5)} and \textbf{(A6)$_{m}^{\lambda}$}, there is a constant $J>0$ such that, for all $x\ge0$,
\begin{equation*}\label{eq 3.24}\tag{3.24}
\begin{aligned}
\kappa(x)\le Jx+J\ \ \text{and}\ \ \zeta(x)\le Jx+J.
\end{aligned}
\end{equation*}

\noindent Moreover, according to (iii) in Remark \ref{re 3.1}, we can also suppose that the generator $f^{1}$ satisfies assumption \textbf{(A6)$_{m}^{\lambda}$}, where $\mathcal{IL}_{m}^{\lambda}$ is substituted by $\mathcal{IL}_{m,h}^{\lambda}$, for a sufficiently large constant $h>e^{(m)}$, which depends only on $m,\lambda,J$.

\noindent \textbf{Step 1.} First, we show that $(Y^{1}-Y^{2})^{+}$ is a bounded process.

\noindent We put $\widehat{Y}:=Y^{1}-Y^{2}$ and $\widehat{Z}:=Z^{1}-Z^{2}$. Then, $(\widehat{Y},\widehat{Z})$ solves the following BSDE
\begin{equation*}
\begin{aligned}
\widehat{Y}_{t}=\xi_{1}-\xi_{2}+\int_{t}^{T}\big(f^{1}(s,Y_{s}^{1},Z_{s}^{1},\mathbb{P}_{Y_{s}^{1}})-f^{2}(s,Y_{s}^{2},Z_{s}^{2},\mathbb{P}_{Y_{s}^{2}})\big)ds-\int_{t}^{T}\widehat{Z}_{s}dW_{s},\ t\in[0,T].
\end{aligned}
\end{equation*}

\noindent Putting $\overline{Y}_{t}:=\widehat{Y}_{t}^{+}+2Jt\ \text{and}\ \overline{Z}_{t}:=\textbf{1}_{\{\widehat{Y}_{t}>0\}}\widehat{Z}_{t},\ t\in[0,T],$ and using It\^{o}-Tanaka's formula applied to the term $\overline{Y}_{t}$, we get
\begin{equation*}
\begin{aligned}
\overline{Y}_{t}&=\overline{Y}_{T}+\int_{t}^{T}\Big[\textbf{1}_{\{\widehat{Y}_{s}>0\}}\big(f^{1}(s,Y_{s}^{1},Z_{s}^{1},\mathbb{P}_{Y_{s}^{1}})-f^{2}(s,Y_{s}^{2},Z_{s}^{2},\mathbb{P}_{Y_{s}^{2}})\big)-2J\Big]ds\\
&\hspace{2em}-\int_{t}^{T}\overline{Z}_{s}dW_{s}-\frac{1}{2}\int_{t}^{T}d\widehat{L}_{s},\ t\in[0,T],
\end{aligned}
\end{equation*}
where the process $\widehat{L}=(\widehat{L}_{t})_{t\in[0,T]}$ denotes the local time of $\widehat{Y}$ at $0$.

Thanks to assumptions \textbf{(A5)}, \textbf{(A6)$_{m}^{\lambda}$} and \textbf{(A7)} with $m\ge2\ \text{and}\ \lambda>\frac{1}{2}$, together with \eqref{eq 3.23}--\eqref{eq 3.24}, we obtain that, $d\mathbb{P}\times dt\text{-}a.e.$,
\begin{equation*}\label{eq 3.25}\tag{3.25}
\begin{aligned}
&\hspace{1.2em}\textbf{1}_{\{\widehat{Y}_{t}>0\}}\big(f^{1}(t,Y_{t}^{1},Z_{t}^{1},\mathbb{P}_{Y_{t}^{1}})-f^{2}(t,Y_{t}^{2},Z_{t}^{2},\mathbb{P}_{Y_{t}^{2}})\big)-2J\\
&\le \kappa(\widehat{Y}_{t}^{+})+\textbf{1}_{\{\widehat{Y}_{t}>0\}}\zeta\big(\frac{|\widehat{Z}_{t}|}{\mathcal{IL}_{m,h}^{\lambda}(|\widehat{Z}_{t}|)}\big)+L\mathbb{E}[\widehat{Y}_{t}^{+}]-2J\\
&\le J\widehat{Y}_{t}^{+}+\textbf{1}_{\{\widehat{Y}_{t}>0\}}\frac{J|\widehat{Z}_{t}|}{\mathcal{IL}_{m,h}^{\lambda}(|\widehat{Z}_{t}|)}+L\mathbb{E}[\widehat{Y}_{t}^{+}]\le J\overline{Y}_{t}+\frac{J|\overline{Z}_{t}|}{\mathcal{IL}_{m,h}^{\lambda}(|\overline{Z}_{t}|)}+L\mathbb{E}[\overline{Y}_{t}].
\end{aligned}
\end{equation*}

\noindent Let $\Psi(\cdot,\cdot)$ be the test function defined by \eqref{eq 3.1} in the proof of Proposition \ref{prop 3.4}, except that $K$ and $\gamma$ are replaced by $J$. A straightforward calculation similar to \eqref{eq 3.2} shows that, for all $(t,x)\in[0,T]\times[0,+\infty)$,
\begin{equation*}\label{eq 3.26}\tag{3.26}
\begin{aligned}
0<\partial_{x}\Psi(t,x)&=\bigg[1-\frac{1}{\big(\ln^{(m)}(h+x)\big)^{2\lambda-1}}\Big(1-\frac{2\lambda-1}{\prod_{j=1}^{m}\ln^{(j)}(h+x)}\Big)\bigg]\\
&\hspace{2.6em}\times\exp\Big\{2\Big(J+\frac{2J^{2}}{2\lambda-1}\Big)t\Big\}\le Q_{1},
\end{aligned}
\end{equation*}
where $Q_{1}:=\exp\big\{2\big(J+\frac{2J^{2}}{2\lambda-1}\big)T\big\}.$ Then, applying It\^{o}-Tanaka's formula to $\Psi(t,\overline{Y}_{t})$ combined with \eqref{eq 3.25}--\eqref{eq 3.26} implies that
\begin{equation*}
\begin{aligned}
d\Psi(t,\overline{Y}_{t})&=-\partial_{x}\Psi(t,\overline{Y}_{t})\Big[\textbf{1}_{\{\widehat{Y}_{t}>0\}}\big(f^{1}(t,Y_{t}^{1},Z_{t}^{1},\mathbb{P}_{Y_{t}^{1}})-f^{2}(t,Y_{t}^{2},Z_{t}^{2},\mathbb{P}_{Y_{t}^{2}})\big)-2J\Big]dt\\
&\hspace{2em}+\frac{1}{2}\partial_{x}\Psi(t,\overline{Y}_{t})d\widehat{L}_{t}+\frac{1}{2}\partial_{xx}\Psi(t,\overline{Y}_{t})|\overline{Z}_{t}|^{2}dt+\partial_{t}\Psi(t,\overline{Y}_{t})dt+\partial_{x}\Psi(t,\overline{Y}_{t})\overline{Z}_{t}dW_{t}\\
&\ge \bigg[-J\partial_{x}\Psi(t,\overline{Y}_{t})\overline{Y}_{t}-\partial_{x}\Psi(t,\overline{Y}_{t})\frac{J|\overline{Z}_{t}|}{\mathcal{IL}_{m,h}^{\lambda}(|\overline{Z}_{t}|)}+\frac{1}{2}\partial_{xx}\Psi(t,\overline{Y}_{t})|\overline{Z}_{t}|^{2}+\partial_{t}\Psi(t,\overline{Y}_{t})\bigg]dt\\
&\hspace{2em}-L\partial_{x}\Psi(t,\overline{Y}_{t})\cdot\mathbb{E}[\overline{Y}_{t}]dt+\partial_{x}\Psi(t,\overline{Y}_{t})\overline{Z}_{t}dW_{t},\ t\in[0,T].\\
\end{aligned}
\end{equation*}

\noindent Analogous to \eqref{eq 3.5}, for $h>e^{(m)}$ sufficiently large, we deduce that
\begin{equation*}\label{eq 3.27}\tag{3.27}
\begin{aligned}
d\Psi(t,\overline{Y}_{t})\ge-J\partial_{x}\Psi(t,\overline{Y}_{t})\cdot\mathbb{E}[\overline{Y}_{t}]dt+\partial_{x}\Psi(t,\overline{Y}_{t})\overline{Z}_{t}dW_{t},\ t\in[0,T].
\end{aligned}
\end{equation*}

\noindent Next, for every $n\ge1$ and $t\in[0,T]$, we define the following stopping time:
\begin{equation*}
\begin{aligned}
\overline{\tau}_{n}^{t}:=\inf\Big\{s\in[t,T]:\int_{t}^{s}|\partial_{x}\Psi(r,\overline{Y}_{r})|^{2}|\overline{Z}_{r}|^{2}dr\ge n\Big\}\wedge T.
\end{aligned}
\end{equation*}
Then, it follows from \eqref{eq 3.27} and the definition of $\overline{\tau}^{t}_{n}$ that
\begin{equation}\label{eq 3.28}\tag{3.28}
\begin{aligned}
\Psi(t,\overline{Y}_{t})\le \mathbb{E}_{t}\Big[\Psi(\overline{\tau}_{n}^{t},\overline{Y}_{\overline{\tau}_{n}^{t}})+J\int_{t}^{\overline{\tau}_{n}^{t}}\partial_{x}\Psi(s,\overline{Y}_{s})\cdot\mathbb{E}[\overline{Y}_{s}]ds\Big],\ t\in[0,T].
\end{aligned}
\end{equation}

\noindent Similar to \eqref{eq 3.7}, from the definition of $\Psi(\cdot,\cdot)$, we have, for a sufficiently large constant $h>e^{(m)}$,
\begin{equation}\label{eq 3.29}\tag{3.29}
\begin{aligned}
\frac{1}{2}(h+x)\le \Psi(t,x)\le Q_{1}(h+x),\ (t,x)\in [0,T]\times [0,+\infty).
\end{aligned}
\end{equation}

\noindent By inserting \eqref{eq 3.29} into \eqref{eq 3.28} and combining this with \eqref{eq 3.26}, we obtain that
\begin{equation}\label{eq 3.30}\tag{3.30}
\begin{aligned}
\frac{1}{2}(h+\overline{Y}_{t})\le \Psi(t,\overline{Y}_{t})&\le \mathbb{E}_{t}\Big[\Psi(\overline{\tau}_{n}^{t},\overline{Y}_{\overline{\tau}_{n}^{t}})+J\int_{t}^{\overline{\tau}_{n}^{t}}\partial_{x}\Psi(s,\overline{Y}_{s})\cdot\mathbb{E}[\overline{Y}_{s}]ds\Big]\\
&\le Q_{1}h+Q_{1}\mathbb{E}_{t}[\overline{Y}_{\overline{\tau}_{n}^{t}}]+JQ_{1}\int_{t}^{T}\mathbb{E}[\overline{Y}_{s}]ds,\ t\in[0,T].
\end{aligned}
\end{equation}

\noindent As $\overline{Y}$ is of the class (D), and $\overline{\tau}_{n}^{t}\uparrow T$, as $n\rightarrow +\infty$, $t\in[0,T]$, passing to the limit in \eqref{eq 3.30} and recalling the definition of $\overline{Y}$ together with the fact that $\xi_{1}\le\xi_{2},\ \mathbb{P}\text{-}a.s.$, we have
\begin{equation*}\label{eq 3.31}\tag{3.31}
\begin{aligned}
\overline{Y}_{t}\le 2Q_{1}h+4Q_{1}JT+2JQ_{1}\int_{t}^{T}\mathbb{E}[\overline{Y}_{s}]ds,\ t\in[0,T].
\end{aligned}
\end{equation*}

\noindent Furthermore, taking the expectation and using (backward) Gronwall's inequality yields
\begin{equation*}\label{eq 3.32}\tag{3.32}
\mathbb{E}[\overline{Y}_{t}]\le C^{\ast}:=\big(2Q_{1}h+4Q_{1}JT\big)e^{2JQ_{1}T},\ t\in[0,T].
\end{equation*}

\noindent Finally, we substitute \eqref{eq 3.32} in \eqref{eq 3.31} to conclude that
\begin{equation*}\label{eq 3.33}\tag{3.33}
(Y_{t}^{1}-Y_{t}^{2})^{+}\le (Y_{t}^{1}-Y_{t}^{2})^{+}+2Jt\le 2Q_{1}(h+2JT+JC^{\ast}T),\ t\in[0,T].
\end{equation*}

\noindent Hence, $(Y^{1}-Y^{2})^{+}$ is bounded.

\noindent \textbf{Step 2.} For every $n\ge1$, we define now the following stopping time:
\begin{equation*}
\begin{aligned}
\tau_{n}:=\inf\Big\{t\in[0,T]:\int_{0}^{t}\big(|Z_{s}^{1}|^{2}+|Z_{s}^{2}|^{2}\big)ds\ge n\Big\}\wedge T.
\end{aligned}
\end{equation*}

\noindent By applying It\^{o}-Tanaka's formula, we obtain that, for all $t\in[0,T]$,
\begin{equation*}\label{eq 3.34}\tag{3.34}
\begin{aligned}
\widehat{Y}_{t\wedge\tau_{n}}^{+}\le\widehat{Y}_{\tau_{n}}^{+}+\int_{t\wedge\tau_{n}}^{\tau_{n}}\textbf{1}_{\{\widehat{Y}_{s}>0\}}\big(f^{1}(s,Y_{s}^{1},Z_{s}^{1},\mathbb{P}_{Y_{s}^{1}})-f^{2}(s,Y_{s}^{2},Z_{s}^{2},\mathbb{P}_{Y_{s}^{2}})\big)ds-\int_{t\wedge\tau_{n}}^{\tau_{n}}\textbf{1}_{\{\widehat{Y}_{s}>0\}}\widehat{Z}_{s}dW_{s}.
\end{aligned}
\end{equation*}

\noindent From \eqref{eq 3.25}, we have, $d\mathbb{P}\times dt\text{-}a.e.$,
\begin{equation*}\label{eq 3.35}\tag{3.35}
\begin{aligned}
\textbf{1}_{\{\widehat{Y}_{t}>0\}}\big(f^{1}(t,Y_{t}^{1},Z_{t}^{1},\mathbb{P}_{Y_{t}^{1}})-f^{2}(t,Y_{t}^{2},Z_{t}^{2},\mathbb{P}_{Y_{t}^{2}})\big)
\le\kappa(\widehat{Y}_{t}^{+})+\textbf{1}_{\{\widehat{Y}_{t}>0\}}\zeta(|\widehat{Z}_{t}|)+L\mathbb{E}[\widehat{Y}_{t}^{+}].\\
\end{aligned}
\end{equation*}

\noindent Moreover, from Lemma \ref{le 2.3} and \eqref{eq 3.24}, it follows that, for all $m\ge1$,
\begin{equation*}\label{eq 3.36}\tag{3.36}
\begin{aligned}
\zeta(x)\le (m+2J)x+\zeta\Big(\frac{2J}{m+2J}\Big),\ x\ge0.
\end{aligned}
\end{equation*}

\noindent Then, combining the results from \eqref{eq 3.34}--\eqref{eq 3.36}, we obtain that, for all $m\ge1$ and $t\in[0,T]$,
\begin{equation*}\label{eq 3.37}\tag{3.37}
\begin{aligned}
\widehat{Y}_{t\wedge\tau_{n}}^{+}&\le a_{m}+\widehat{Y}_{\tau_{n}}^{+}+\int_{t\wedge\tau_{n}}^{\tau_{n}}\big(\kappa(\widehat{Y}_{s}^{+})+\textbf{1}_{\{\widehat{Y}_{s}>0\}}(m+2J)|\widehat{Z}_{s}|+L\mathbb{E}[\widehat{Y}_{s}^{+}]\big)ds-\int_{t\wedge\tau_{n}}^{\tau_{n}}\textbf{1}_{\{\widehat{Y}_{s}>0\}}\widehat{Z}_{s}dW_{s}\\
&=a_{m}+\widehat{Y}_{\tau_{n}}^{+}+\int_{t\wedge\tau_{n}}^{\tau_{n}}\big(\kappa(\widehat{Y}_{s}^{+})+L\mathbb{E}[\widehat{Y}_{s}^{+}]\big)ds-\int_{t\wedge\tau_{n}}^{\tau_{n}}\textbf{1}_{\{\widehat{Y}_{s}>0\}}\widehat{Z}_{s}\cdot \big(dW_{s}-b_{s}^{m}ds\big),
\end{aligned}
\end{equation*}
where $a_{m}:=\zeta\big(\frac{2J}{m+2J}\big)T\rightarrow 0,\ \text{as}\ m\rightarrow +\infty,$ and $b_{s}^{m}:=\frac{(m+2J)\widehat{Z}_{s}^{\mathsf{T}}}{|\widehat{Z}_{s}|},\ \text{if}\ \widehat{Z}_{s}\neq0;$ $b_{s}^{m}:=0,\ \text{if}\ \widehat{Z}_{s}=0.$ Next, for all $m\ge1$, we define
\begin{equation*}
\begin{aligned}
W^{m}_{t}:=W_{t}-\int_{0}^{t}b_{s}^{m}ds,\ t\in[0,T],\ \ \text{and}\ \ d\mathbb{P}^{m}:=\exp\Big\{\int_{0}^{T}(b_{s}^{m})^{\mathsf{T}}dW_{s}-\frac{1}{2}\int_{0}^{T}|b_{s}^{m}|^{2}ds\Big\}d\mathbb{P}.
\end{aligned}
\end{equation*}

\noindent Due to Girsanov's theorem, $\mathbb{P}^{m}$ is equivalent to $\mathbb{P}$, and the process $W^{m}$ is a Brownian motion under $\mathbb{P}^{m}$. For simplicity, the expectation and conditional expectation with respect to $\mathcal{F}_{t}$ under $\mathbb{P}^{m}$ are denoted by $\mathbb{E}^{m}[\cdot]$ and $\mathbb{E}_{t}^{m}[\cdot]$, respectively.

By taking the conditional expectation of \eqref{eq 3.37} under $\mathbb{P}^{m}$, we deduce that, for all $t\in[0,T]$,
\begin{equation*}
\begin{aligned}
\widehat{Y}_{t\wedge\tau_{n}}^{+}\le a_{m}+\mathbb{E}^{m}_{t}[\widehat{Y}_{\tau_{n}}^{+}]+\mathbb{E}^{m}_{t}\Big[\int_{t\wedge\tau_{n}}^{\tau_{n}}\big(\kappa(\widehat{Y}_{s}^{+})+L\mathbb{E}[\widehat{Y}_{s}^{+}]\big)ds\Big],\ \mathbb{P}\text{-}a.s.
\end{aligned}
\end{equation*}

\noindent Hence, as $\tau_{n}\uparrow T$, as $n\rightarrow+\infty$, $\widehat{Y}^{+}$ is bounded, and $\widehat{Y}_{T}^{+}=(\xi_{1}-\xi_{2})^{+}=0,$ taking the limit in the latter inequality and applying Lebesgue's dominated convergence theorem yields, for all $t\in[0,T]$,
\begin{equation*}\label{eq 3.38}\tag{3.38}
\begin{aligned}
\widehat{Y}_{t}^{+}\le a_{m}+\mathbb{E}^{m}_{t}\Big[\int_{t}^{T}\big(\kappa(\widehat{Y}_{s}^{+})+L\mathbb{E}[\widehat{Y}_{s}^{+}]\big)ds\Big],\ \mathbb{P}\text{-}a.s.
\end{aligned}
\end{equation*}

\noindent Consequently, as $\kappa(\cdot)$ is nondecreasing,
\begin{equation*}\label{eq 3.39}\tag{3.39}
\begin{aligned}
\Vert \widehat{Y}^{+}_{t}\Vert_{\infty}\le a_{m}+\int_{t}^{T}\big(\kappa(\Vert\widehat{Y}_{s}^{+}\Vert_{\infty})+L\Vert \widehat{Y}^{+}_{s}\Vert_{\infty}\big)ds=a_{m}+\int_{t}^{T}\widetilde{\kappa}(\Vert \widehat{Y}^{+}_{s}\Vert_{\infty})ds,\ t\in[0,T],
\end{aligned}
\end{equation*}
where $\widetilde{\kappa}(x):=\kappa(x)+Lx,\ x\ge0$. It's easy to verify that $\widetilde{\kappa}(\cdot)$ is still a nondecreasing and concave function with $\widetilde{\kappa}(0)=0,\ \widetilde{\kappa}(u)>0,\ u>0$ and $\int_{0^{+}}\frac{du}{\widetilde{\kappa}(u)}=+\infty$ (we refer to Appendix in \cite{F13}).

Then it follows from Bihari's inequality (see Lemma \ref{le 2.4}) that
\begin{equation*}\label{eq 3.40}\tag{3.40}
\begin{aligned}
\Vert \widehat{Y}^{+}_{t}\Vert_{\infty}\le \widetilde{\Pi}^{-1}\big(\widetilde{\Pi}(a_{m})+T-t\big),\ t\in[0,T],
\end{aligned}
\end{equation*}
where $\widetilde{\Pi}(x):=\int_{1}^{x}\frac{1}{\widetilde{\kappa}(r)}dr,\ x>0.$ Finally, letting $m\rightarrow +\infty$ in \eqref{eq 3.40} yields $\Vert \widehat{Y}^{+}_{t}\Vert_{\infty}=0,\ t\in[0,T]$, i.e., $Y_{t}^{1}\le Y_{t}^{2},\ t\in[0,T],\ \mathbb{P}\text{-}a.s.$
%\noindent{\Large{\textbf{Question:}}}
%In step 1, just as Fan \cite{FH23}, we have showed that $(Y-\widetilde{Y})^{+}$ is a bounded process, thereby ensuring sufficient integrability. Proceeding to step 2, following the proof of Theorem 2.1 in \cite{F16}, we obtain \eqref{eq 3.24}. In the non mean-field scenario, \eqref{eq 3.24} reduces to
%\begin{equation*}
%\begin{aligned}
%\widehat{Y}_{t}^{+}\le a_{m}+\mathbb{E}^{m}_{t}\Big[\int_{t}^{T}\big(\kappa(\widehat{Y}_{s}^{+})\big)ds\Big],\ \ \mathbb{P}\text{-}a.s.
%\end{aligned}
%\end{equation*}
%This, together with \cite[Lemma 2.1]{F16}, leads to that $(Y_{t}-\widetilde{Y}_{t})^{+}=0$, which proves the comparison theorem. However, in the mean-field setting, the additional term $L\mathbb{E}[\widehat{Y}_{s}^{+}]$ in \eqref{eq 3.24} prevents us from directly applying \cite[Lemma 2.1]{F16} to obtain the comparison theorem.
%Finally, in view of the assumptions \textbf{(A4)} and \textbf{(A5)$_{m}^{\lambda}$} with $m\ge2,\ \lambda>\frac{1}{2}$ of $f^{1}$, $\xi_{1}\le\xi_{2}$ together with the fact that $(Y-\widetilde{Y})^{+}$ is a bounded process, we can apply a similar discussion and analysis as outlined in to conclude that $\mathbb{E}[(Y_{t}^{1}-Y_{t}^{2})^{+}]=0,\ t\in[0,T]$, i.e., $Y_{t}^{1}\le Y_{t}^{2},\ t\in[0,T],\ \mathbb{P}\text{-}a.s.$
\end{proof}

By combining the existence stated in Theorem \ref{th 3.6} with the comparison theorem (Theorem \ref{th 3.7}), we get the existence and the uniqueness of the $L^{1}$ solution for one-dimensional mean-field BSDEs \eqref{eq 1.2} with integrable parameters.

\begin{theorem}\label{th 3.8}
Let assumptions \textbf{(A1)}, \textbf{(A2)$_{m}^{\lambda}$}, \textbf{(A3)} and \textbf{(A4)} hold true with some $m\ge2\ \text{and}\ \lambda>\frac{1}{2}$. Then, for every given $\xi\in L_{\mathcal{F}_{T}}^{1}(\Omega;\mathbb{R})$, mean-field BSDE \eqref{eq 1.2} possesses a solution $(Y,Z)$ such that $Y$ belongs to the class (D) and $(Y,Z)\in\mathcal{S}^{p}_{\mathbb{F}}(0,T;\mathbb{R})\times \mathcal{H}^{p}_{\mathbb{F}}(0,T;\mathbb{R}^{d})$, for all $p\in(0,1)$. Moreover, there exists a constant $\widetilde{C}>0$ depending only on $m,\lambda,\gamma,K,T,\Vert\xi\Vert_{L^{1}(\Omega)}$ and $\Vert\theta\Vert_{\mathcal{L}_{\mathbb{F}}^{1}(0,T)}$ such that
\begin{equation*}
|Y_{t}|\le |Y_{t}|+\int_{0}^{t}\theta_{s}ds\le \widetilde{C}\mathbb{E}_{t}\Big[|\xi|+\int_{0}^{T}\theta_{s}ds\Big]+\widetilde{C},\ t\in[0,T],\ \mathbb{P}\text{-}a.s.
\end{equation*}

\noindent Furthermore, provided that also assumptions \textbf{(A5)}, \textbf{(A6)$_{m}^{\lambda}$} and \textbf{(A7)} are satisfied for the generator $f$, then the mean-field BSDE \eqref{eq 1.2} has a unique $L^{1}$ solution $(Y,Z)$.
\end{theorem}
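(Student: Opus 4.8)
The plan is to assemble the two blocks already established: the existence assertion together with the a priori bound is exactly the content of Theorem~\ref{th 3.6}, while uniqueness is read off from the comparison theorem, Theorem~\ref{th 3.7}. For the existence part I would simply invoke Theorem~\ref{th 3.6}: under \textbf{(A1)}, \textbf{(A2)$_{m}^{\lambda}$}, \textbf{(A3)} and \textbf{(A4)} with $m\ge 2$ and $\lambda>\frac{1}{2}$, for every $\xi\in L^{1}_{\mathcal{F}_{T}}(\Omega;\mathbb{R})$ there is a solution $(Y,Z)$ with $Y$ of class (D), $(Y,Z)\in\mathcal{S}^{p}_{\mathbb{F}}(0,T;\mathbb{R})\times\mathcal{H}^{p}_{\mathbb{F}}(0,T;\mathbb{R}^{d})$ for all $p\in(0,1)$, satisfying the stated estimate \eqref{eq 3.12}. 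This is verbatim the first conclusion of Theorem~\ref{th 3.6}, so nothing further is needed here.

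For uniqueness, suppose $(Y^{1},Z^{1})$ and $(Y^{2},Z^{2})$ are two $L^{1}$ solutions of \eqref{eq 1.2} associated with the same data $(\xi,f)$, where now $f$ additionally obeys \textbf{(A5)}, \textbf{(A6)$_{m}^{\lambda}$} and \textbf{(A7)}. Since both $Y^{1}$ and $Y^{2}$ are of class (D), the family $\{(Y^{1}_{\tau}-Y^{2}_{\tau})^{+}:\tau\in\mathscr{T}[0,T]\}$ is dominated by $\{|Y^{1}_{\tau}|+|Y^{2}_{\tau}|\}$ and is therefore uniformly integrable; hence $(Y^{1}-Y^{2})^{+}$ is of class (D). I then apply Theorem~\ref{th 3.7} with $\xi_{1}=\xi_{2}=\xi$ and $f^{1}=f^{2}=f$. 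The cross-term hypothesis of that theorem, namely $\mathbf{1}_{\{Y^{1}_{t}>Y^{2}_{t}\}}\big(f(t,Y^{2}_{t},Z^{2}_{t},\mathbb{P}_{Y^{2}_{t}})-f(t,Y^{2}_{t},Z^{2}_{t},\mathbb{P}_{Y^{2}_{t}})\big)\le 0$, holds trivially since the bracket vanishes, and $\xi\le\xi$ is clear; the comparison theorem thus yields $Y^{1}_{t}\le Y^{2}_{t}$ for all $t$. Exchanging the roles of the two solutions gives the reverse inequality, so $Y^{1}_{t}=Y^{2}_{t}$ for all $t\in[0,T]$, $\mathbb{P}$-a.s.

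It remains to recover $Z^{1}=Z^{2}$. Subtracting the two copies of \eqref{eq 1.2} and writing $Y:=Y^{1}=Y^{2}$, the continuous local martingale $\int_{0}^{t}(Z^{1}_{s}-Z^{2}_{s})\,dW_{s}$ coincides with the continuous finite-variation process $\int_{0}^{t}\big(f(s,Y_{s},Z^{1}_{s},\mathbb{P}_{Y_{s}})-f(s,Y_{s},Z^{2}_{s},\mathbb{P}_{Y_{s}})\big)\,ds$; both are finite for all $t$, $\mathbb{P}$-a.s., by the integrability built into Definition~\ref{de 2.1}. A continuous local martingale of finite variation is a.s.\ constant, so this common process is identically $0$; taking quadratic variations gives $\int_{0}^{T}|Z^{1}_{s}-Z^{2}_{s}|^{2}\,ds=0$, that is $Z^{1}=Z^{2}$, $d\mathbb{P}\times dt$-a.e. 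This establishes uniqueness of the $L^{1}$ solution.

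Since the two substantive ingredients are already in place, there is no genuine obstacle; the one point deserving care is the last step, where only $L^{1}$ and $\mathcal{H}^{p}$ ($p<1$) integrability is available, so the identification $Z^{1}=Z^{2}$ must be carried out pathwise via the local-martingale-versus-finite-variation dichotomy rather than by an $L^{2}$ energy estimate. One should also double-check, before invoking Theorem~\ref{th 3.7} in both directions, that the class (D) requirement on $(Y^{1}-Y^{2})^{+}$ is met, which is immediate from the domination argument above.
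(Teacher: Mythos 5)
Your proposal is correct and takes essentially the same route as the paper: existence and the a priori bound are quoted from Theorem~\ref{th 3.6}, uniqueness of $Y$ follows from applying Theorem~\ref{th 3.7} in both directions (your explicit check that $(Y^{1}-Y^{2})^{+}$ is of class (D) is a detail the paper leaves implicit), and $Z^{1}=Z^{2}$ is obtained by identifying the stochastic integral of $Z^{1}-Z^{2}$ with a continuous finite-variation process. The paper performs this last identification after localizing with the stopping times $\tau_{n}=\tau_{n}^{1}\wedge\tau_{n}^{2}$, which is precisely the standard proof of the local-martingale-versus-finite-variation dichotomy you invoke, so the two arguments coincide in substance.
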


\begin{proof} We only need to prove the uniqueness of the solution $(Y,Z)$.\\
\indent If $(Y^{1},Z^{1})$ and $(Y^{2},Z^{2})$ are both an $L^{1}$ solution of the mean-field BSDE \eqref{eq 1.2}, then from Theorem \ref{th 3.7} we get $Y_{t}^{1}=Y_{t}^{2},\ \mathbb{P}\text{-}a.s.$, for all $t\in[0,T$]. We define the following stopping times:
\begin{equation*}
\tau_{n}^{i}:=\inf\Big\{t\in[0,T]:\int_{0}^{t}|Z_{s}^{i}|^{2}ds\ge n\Big\}\wedge T,\ i=1,2;\ \ \tau_{n}:=\tau_{n}^{1}\wedge\tau_{n}^{2},\ n\geq 1.
\end{equation*}

\noindent Then, since both $(Y^{1},Z^{1})$ and $(Y^{1},Z^{2})$ are the $L^{1}$ solutions of \eqref{eq 1.2} we have
\begin{equation*}
\int_{0}^{t}\big(f(s,Y_{s}^{1},Z_{s}^{1},\mathbb{P}_{Y_{s}^{1}})-f(s,Y_{s}^{1},Z_{s}^{2},\mathbb{P}_{Y_{s}^{1}})\big)ds=\int_{0}^{t}(Z_{s}^{1}-Z_{s}^{2})dW_{s},\ 0\le t\le \tau_{n},
\end{equation*}
which means $Z_{s}^{1}=Z_{s}^{2},\ d\mathbb{P}\times ds\text{-}a.e.$ on $\Omega\times[0,\tau_{n}]$, and as $\tau_{n}\uparrow T$ ($n\rightarrow+\infty$), this implies $Z_{s}^{1}=Z_{s}^{2},\ d\mathbb{P}\times ds\text{-}a.e.$ on $\Omega\times[0,T]$.
\end{proof}

\section{Multi-dimensional case}\label{sec 4}
This section is devoted to the study of the existence and the uniqueness of $L^{1}$ solutions for multi-dimensional mean-field BSDEs \eqref{eq 1.2} with integrable data, where the generator satisfies a one-sided Osgood condition in the variable $y$. For this end, we assume that the terminal value $\xi:\Omega\rightarrow \mathbb{R}^{n}$ is $\mathcal{F}_{T}$-measurable and the generator $f:=f(\omega,t,y,z,\mu):\Omega\times[0,T]\times\mathbb{R}^{n}\times\mathbb{R}^{n\times d}\times\mathcal{P}_{1}(\mathbb{R}^{n})\rightarrow \mathbb{R}^{n}$ is $\mathbb{F}$-progressively measurable, for every $(y,z,\mu)\in \mathbb{R}^{n}\times\mathbb{R}^{n\times d}\times\mathcal{P}_{1}(\mathbb{R}^{n})$. Additionally, we make use of the following assumptions on the generator $f$:

\noindent\textbf{(H1)}\ There exists a nondecreasing and concave function $\eta:[0,+\infty)\rightarrow[0,+\infty)$ with $\eta(0)=0$, $\eta(u)>0,\ u>0$  and $\int_{0^{+}}\frac{du}{\eta(u)}=+\infty$, such that, for all $t\in[0,T],\ y,\ \bar{y}\in\mathbb{R}^{n}$, $z\in\mathbb{R}^{n\times d}$ and $\mu\in \mathcal{P}_{1}(\mathbb{R}^{n}),$
\begin{equation*}
\begin{aligned}
\Big\langle\frac{y-\bar{y}}{|y-\bar{y}|}\textbf{1}_{\{|y-\bar{y}|\neq0\}},f(t,y,z,\mu)-f(t,\bar{y},z,\mu)\Big\rangle\le \eta\big(|y-\bar{y}|\big),\ d\mathbb{P}\times dt\text{-}a.e.
\end{aligned}
\end{equation*}

\noindent\textbf{(H2)}\ There exists $K>0$, such that, for all $t\in[0,T],\ y\in\mathbb{R}^{n}$, $z\in\mathbb{R}^{n\times d}$ and $\mu,\ \bar{\mu}\in \mathcal{P}_{1}(\mathbb{R}^{n}),$
\begin{equation*}
\begin{aligned}
|f(t,y,z,\mu)-f(t,y,z,\bar{\mu})|\le KW_{1}(\mu,\bar{\mu}),\ d\mathbb{P}\times dt\text{-}a.e.
\end{aligned}
\end{equation*}

\noindent\textbf{(H3)}\ There exist constants $L>0,\ M>0,\ \alpha\in(0,1)$ and a nonnegative process $(\vartheta_{t})_{t\in[0,T]}\in \mathcal{L}_{\mathbb{F}}^{1}(0,T;\mathbb{R})$, such that, $d\mathbb{P}\times dt\text{-}a.e.$, for all $t\in[0,T],\ y\in\mathbb{R}^{n}$, $z,\ \bar{z}\in\mathbb{R}^{n\times d}$ and $\mu\in \mathcal{P}_{1}(\mathbb{R}^{n}),$
\begin{equation*}
\begin{aligned}
|f(t,y,z,\mu)-f(t,y,\bar{z},\mu)|\le L|z-\bar{z}|,\\
\end{aligned}
\end{equation*}
\begin{equation*}
\begin{aligned}
|f(t,y,z,\mu)-f(t,y,0,\mu)|\le M(\vartheta_{t}+|y|+W_{1}(\mu,\delta_{0})+|z|)^{\alpha}.
\end{aligned}
\end{equation*}

\noindent\textbf{(H4)}\ For all $r\ge0$, it holds that $\mathbb{E}\big[\int_{0}^{T}\bar{\phi}_{r}(t)dt\big]<+\infty\ \text{with}\ \bar{\phi}_{r}(t):=\sup_{|y|\le r}|f(t,y,0,\delta_{0})|.$ Moreover, $d\mathbb{P}\times dt\text{-}a.e.$, for all $z\in\mathbb{R}^{n\times d}$ and $\mu\in \mathcal{P}_{1}(\mathbb{R}^{n})$, $y\rightarrow f(\omega,t,y,z,\mu)$ is continuous.

\begin{remark}\label{re 4.1}
According to \cite[Proposition 1]{F15}, the one-sided Osgood condition in \textbf{(H1)} is stronger than the $p$-order weak monotonicity condition \textbf{(H1$p$)} ($p>1$): $d\mathbb{P}\times dt\text{-}a.e.$,
\begin{equation*}
\begin{aligned}
|y-\bar{y}|^{p-1}\Big\langle\frac{y-\bar{y}}{|y-\bar{y}|}\textbf{1}_{\{|y-\bar{y}|\neq0\}},f(t,y,z,\mu)-f(t,\bar{y},z,\mu)\Big\rangle\le \eta_{p}\big(|y-\bar{y}|^{p}\big),
\end{aligned}
\end{equation*}
\noindent for all $t\in[0,T],\ y,\ \bar{y}\in\mathbb{R}^{n}$, $z\in\mathbb{R}^{n\times d}$ and $\mu\in \mathcal{P}_{1}(\mathbb{R}^{n}),$ where $\eta_{p}:[0,+\infty)\rightarrow[0,+\infty)$ is a nondecreasing and concave function with $\eta_{p}(0)=0$, $\eta_{p}(u)>0,\ u>0$  and $\int_{0^{+}}\frac{du}{\eta_{p}(u)}=+\infty$.
\end{remark}

\begin{remark}\label{re 4.2}
The function $\eta(\cdot)$ in \textbf{(H1)} satisfies an at most linear growth condition owing to its nondecreasing concavity with $\eta(0)=0$. Here and thereafter,  $R$ is used to represent the corresponding linear growth constant, namely
\begin{equation*}\label{eq 4.1}\tag{4.1}
\eta(x)\le R(x+1),\ x\ge0.
\end{equation*}
Analogously, we denote by $R_{p}$ the linear growth constant associated with $\eta_{p}(\cdot)$, namely
\begin{equation*}
\eta_{p}(x)\le R_{p}(x+1),\ x\ge0.
\end{equation*}
\end{remark}

\begin{example}\label{ex 4.3}
For all $(\omega,t,y,z,\mu)\in \Omega\times[0,T]\times\mathbb{R}^{n}\times \mathbb{R}^{n\times d}\times\mathcal{P}_{1}(\mathbb{R}^{n})$, we can consider the generator:
\begin{equation*}
\begin{aligned}
F^{i}(\omega,t,y,z,\mu)&:=|W_{t}(\omega)|-y_{i}^{3}+\frac{2|z|}{1+|z|}+\int_{\mathbb{R}^{n}}3x_{i}\mu(dx),\ 1\le i\le n.
\end{aligned}
\end{equation*}
It is not difficult to verify that $F$ satisfies assumptions \textbf{(H1)}--\textbf{(H4)} with $\eta(x)=x,\ x\ge0,\ K=3,\ L=2\sqrt{n},\ M=2\sqrt{n},\ \vartheta_{t}\equiv1,\ \alpha=\frac{1}{3},\ \bar{\phi}_{r}(t)=\sup_{|y|\le r}\big[\sum_{i=1}^{n}(|W_{t}|-y_{i}^{3})^{2}\big]^{\frac{1}{2}}$.
\end{example}

\begin{example}\label{ex 4.4}
For all $(\omega,t,y,z,\mu)\in \Omega\times[0,T]\times\mathbb{R}^{n}\times \mathbb{R}^{n\times d}\times\mathcal{P}_{1}(\mathbb{R}^{n})$, we can consider the generator:
\begin{equation*}
\begin{aligned}
\bar{F}^{i}(\omega,t,y,z,\mu)&:=e^{|W_{t}(\omega)|}-e^{y_{i}}+\psi(|y|)+2\arctan(|z|)+(e^{-y_{i}}\wedge1)|\sin|z||+3W_{1}(\mu,\delta_{0}),\ 1\le i\le n,
\end{aligned}
\end{equation*}
where $\psi(x):=(x|\ln x|)\textbf{1}_{\{0< x\le\varepsilon\}}+[\psi^{\prime}(\varepsilon-)(x-\varepsilon)+\psi(\varepsilon)]\textbf{1}_{\{x>\varepsilon\}},\ x>0,$ with $\psi(0)=0$. It can be checked that $\bar{F}$ satisfies assumptions \textbf{(H1)}--\textbf{(H4)} with $\eta(x)=\sqrt{n}\psi(x),\ x\ge0,\ K=3\sqrt{n},\ L=3\sqrt{n},\ M=(\pi+1)\sqrt{n},\ \vartheta_{t}\equiv1,\ \alpha=\frac{1}{4},\ \bar{\phi}_{r}(t)=\sup_{|y|\le r}\big[\sum_{i=1}^{n}\{e^{|W_{t}|}-e^{y_{i}}+\psi(|y|)\}^{2}\big]^{\frac{1}{2}}$.
\end{example}

For reader's convenience we give some important estimates for solutions of multi-dimensional BSDEs in the appendix; one may refer to the Propositions 1--4 in \cite{F18} and Proposition 3.2 in \cite{BD03} for more details.

Now we will present the main results of this section concerning the existence and the uniqueness of $L^{1}$ solutions for multi-dimensional mean-field BSDEs \eqref{eq 1.2} with integrable data. To be more precise, Theorem \ref{th 4.9} addresses the uniqueness of the solution, while Theorem \ref{th 4.10} focuses on the existence of such equations.

\begin{theorem}\label{th 4.9}
Let assumptions \textbf{(H1)}--\textbf{(H3)} hold true. Then for every given $\xi\in L_{\mathcal{F}_{T}}^{1}(\Omega;\mathbb{R}^{n})$, the mean-field BSDE \eqref{eq 1.2} admits at most one solution $(Y,Z)$ such that $Y$ belongs to the class (D) and $Z$ belongs to $\bigcup_{\beta>\alpha}\mathcal{H}^{\beta}_{\mathbb{F}}(0,T;\mathbb{R}^{n\times d})$, which directly implies the uniqueness of the $L^{1}$ solution.
\end{theorem}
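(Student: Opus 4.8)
The plan is to argue by contradiction-free direct estimation. Suppose $(Y^1,Z^1)$ and $(Y^2,Z^2)$ are two solutions with $Y^i$ of class (D) and $Z^i\in\mathcal{H}^{\beta}_{\mathbb{F}}(0,T;\mathbb{R}^{n\times d})$ for some $\beta>\alpha$. Set $\widehat{Y}:=Y^1-Y^2$, $\widehat{Z}:=Z^1-Z^2$, so that $(\widehat{Y},\widehat{Z})$ solves a BSDE with zero terminal value and generator $\Delta f_s:=f(s,Y^1_s,Z^1_s,\mathbb{P}_{Y^1_s})-f(s,Y^2_s,Z^2_s,\mathbb{P}_{Y^2_s})$. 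First I would split $\Delta f_s$ into a $y$-increment, a $z$-increment and a $\mu$-increment and record three bounds: by \textbf{(H1)}, the projection $\langle \widehat{Y}_s|\widehat{Y}_s|^{-1}\mathbf{1}_{\{\widehat{Y}_s\neq0\}},\, f(s,Y^1_s,Z^1_s,\mathbb{P}_{Y^1_s})-f(s,Y^2_s,Z^1_s,\mathbb{P}_{Y^1_s})\rangle\le \eta(|\widehat{Y}_s|)$; by the Lipschitz part of \textbf{(H3)}, the $z$-increment has norm at most $L|\widehat{Z}_s|$; and by \textbf{(H2)} together with the coupling bound $W_1(\mathbb{P}_{Y^1_s},\mathbb{P}_{Y^2_s})\le \mathbb{E}[|\widehat{Y}_s|]$, the $\mu$-increment has norm at most $K\,\mathbb{E}[|\widehat{Y}_s|]$.

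The hard part, and the reason the multi-dimensional case is genuinely more delicate than the scalar Theorem \ref{th 3.7}, is the term $L|\widehat{Z}_s|$. At the $L^{1}$ level one applies It\^o--Tanaka to $|\widehat{Y}_t|$, but the Girsanov device of Theorem \ref{th 3.7} is unavailable here: the scalar drift $\langle \widehat{Y}_s/|\widehat{Y}_s|,\,\Delta f_s\rangle$ contributed by the $z$-increment cannot in general be realigned with the martingale integrand $\widehat{Z}_s^{\mathsf{T}}\widehat{Y}_s/|\widehat{Y}_s|$, so $L|\widehat{Z}_s|$ is neither absorbable (there is no $|\widehat{Z}_s|^2$ to cancel against, as would appear only in an $|\widehat{Y}_s|^2$ computation requiring $L^2$-integrability we do not have) nor controllable by $\widehat{Y}$ alone. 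The resolution I would use is the a priori $L^{p}$ estimates recalled in the Appendix (\cite[Propositions 1--4]{F18}, \cite[Proposition 3.2]{BD03}): choosing $p\in(\alpha,1)\cap(0,\beta]$, which is nonempty precisely because $\beta>\alpha$ and which is compatible with the sublinear exponent $\alpha$ of \textbf{(H3)}, these estimates bound $\mathbb{E}[(\int_0^T|\widehat{Z}_s|^2\,ds)^{p/2}]$ in terms of $\mathbb{E}[\sup_{[t,T]}|\widehat{Y}|^{p}]$ and the $z$-free source $\int_t^T(\eta(|\widehat{Y}_s|)+K\mathbb{E}[|\widehat{Y}_s|])\,ds$. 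Estimating $\int_t^T L|\widehat{Z}_s|\,ds\le L(T-t)^{1/2}(\int_t^T|\widehat{Z}_s|^2\,ds)^{1/2}$ by H\"older and inserting this $Z$-bound, the $z$-contribution to the $|\widehat{Y}|$-estimate is controlled by $C\,(T-t)^{p/2}\,\mathbb{E}[\sup_{[t,T]}|\widehat{Y}|^{p}]$; on a subinterval short enough that $C(T-t)^{p/2}<1$ this term is absorbed into the left-hand side. This forces the partition of $[0,T]$ into finitely many small subintervals announced in the Introduction, on each of which one obtains a self-referential inequality of the form $\mathbb{E}[\sup_{[t,T']}|\widehat{Y}|^{p}]\le C\,\mathbb{E}[(\int_t^{T'}(\eta(|\widehat{Y}_s|)+K\mathbb{E}[|\widehat{Y}_s|])\,ds)^{p}]$.

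To close on each subinterval I would pass, via the class-(D) property and \cite[Lemma 6.1]{BD03}, to the scalar quantity $v(t):=\sup_{s\in[t,T']}\mathbb{E}[|\widehat{Y}_s|]$ and use the concavity of $\eta$ with Jensen's inequality to arrive at $v(t)\le \int_t^{T'}\widetilde{\kappa}(v(s))\,ds$, where $\widetilde{\kappa}(u):=\eta(u)+Ku$ is again continuous, nondecreasing, concave, vanishing at $0$ and satisfying $\int_{0^{+}}\widetilde{\kappa}(u)^{-1}\,du=+\infty$ (exactly as for the modulus $\widetilde{\kappa}$ in the proof of Theorem \ref{th 3.7}). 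Bihari's inequality (Lemma \ref{le 2.4}) with $v_0=0$ then yields $v\equiv0$, hence $\widehat{Y}\equiv0$ on that subinterval; iterating backward from $T$ across the partition gives $Y^1_t=Y^2_t$ for all $t\in[0,T]$, $\mathbb{P}$-a.s. Finally, substituting $Y^1=Y^2$ into the two equations shows $\int_0^{\cdot}(Z^1_s-Z^2_s)\,dW_s\equiv0$, so $Z^1=Z^2$ in $d\mathbb{P}\times dt$-measure, which is the asserted uniqueness.

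I expect the main obstacle to be exactly the reconciliation of the Lipschitz-in-$z$ growth with mere $L^1$/class-(D) integrability: the bookkeeping that keeps the a priori $Z$-estimate, the short-interval self-absorption, and the concave Osgood modulus mutually consistent at a single admissible power $p\in(\alpha,1)\cap(0,\beta]$ is where the argument is most fragile, and it is precisely the hypothesis $Z\in\bigcup_{\beta>\alpha}\mathcal{H}^{\beta}_{\mathbb{F}}$ that makes this power available.
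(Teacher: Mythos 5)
Your proposal diverges from the paper's proof at the decisive point, and the divergence opens a genuine gap. You bound the $z$-increment of the generator only by the Lipschitz term $L|\widehat{Z}_s|$ and then try to run the whole argument at a power $p\in(\alpha,1)\cap(0,\beta]$, i.e., $p<1$. Three things break. First, the absorption step needs an a priori estimate of the form $\mathbb{E}\big[\sup_{[t,T']}|\widehat{Y}|^{p}\big]\le C\,\mathbb{E}\big[\big(\int_t^{T'}(\cdots+L|\widehat{Z}_s|)ds\big)^{p}\big]$ at $p<1$; none of the appendix lemmas provides this (Lemmas C.2--C.5 require $p>1$, while Lemma C.1 only bounds $Z$ in terms of $Y$, not conversely), and the class-(D) route through \cite[Lemma 6.1]{BD03} requires $\mathbb{E}\big[\int_0^T|\widehat{Z}_s|\,ds\big]<+\infty$, which is exactly what you do not have: $\beta>\alpha$ allows $\beta<1$, so the drift $L|\widehat{Z}_s|$ need not be $d\mathbb{P}\times dt$-integrable. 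Second, to invoke Lemma C.1 you must linearize the Osgood modulus, $|y|\eta(|y|)\le R|y|^{2}+R|y|$, and the additive constant $R$ enters the source term $g$; it produces a non-vanishing inhomogeneity in your self-referential inequality, so Bihari starts from $v_0>0$ and cannot conclude $v\equiv 0$ (the paper kills this constant via Lemma \ref{le 2.3} with $m\to+\infty$ only \emph{after} $\widehat{Y}\equiv0$ is already established). Third, your closure passes from a $p$-th moment inequality with $p<1$ to the first-moment quantity $v(t)=\sup_{s}\mathbb{E}[|\widehat{Y}_s|]$; this direction is unavailable, since for $p<1$ one has $\big(\mathbb{E}[|X|^{p}]\big)^{1/p}\le\mathbb{E}[|X|]$, so smallness of $\mathbb{E}[|\widehat{Y}_s|^{p}]$ does not control $\mathbb{E}[|\widehat{Y}_s|]$.

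The idea you are missing is the paper's bootstrap, which also explains the true role of the hypothesis $Z\in\bigcup_{\beta>\alpha}\mathcal{H}^{\beta}_{\mathbb{F}}$: it is not that it makes a power $p\le\beta$ below $1$ admissible, but that $p_0:=\beta/\alpha>1$. In Step 1 the paper does \emph{not} use the Lipschitz bound for the $z$-increment along the two given solutions; it applies the sublinear part of \textbf{(H3)} twice, giving $|f(s,\widetilde{Y}_s,Z_s,\mu)-f(s,\widetilde{Y}_s,\widetilde{Z}_s,\mu)|\le 2M(\vartheta_s+|\widetilde{Y}_s|+\mathbb{E}[|\widetilde{Y}_s|]+|Z_s|+|\widetilde{Z}_s|)^{\alpha}$, so the $z$-dependent drift becomes an $\alpha$-power source $Q(t)$ whose terminal value lies in $L^{\beta/\alpha}$; Doob's maximal inequality at exponent $\beta/\alpha>1$ then yields $\widehat{Y}\in\mathcal{S}^{p_0}_{\mathbb{F}}(0,T;\mathbb{R}^{n})$ --- the difference of the two class-(D) solutions gains integrability strictly above $1$. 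Once $p_0>1$ is secured, the Lipschitz term $L|y|^{p_0-1}|z|$ is absorbed by the It\^{o} correction $\frac{p_0(p_0-1)}{2}|y|^{p_0-2}|z|^{2}$ inside Lemma C.2 (with the $p_0$-order weak monotonicity supplied by Remark \ref{re 4.1}), Bihari gives $\widehat{Y}\equiv0$ on all of $[0,T]$ with no time partition whatsoever (the partition mentioned in the Introduction belongs to the existence proofs, Theorems \ref{th 4.10} and \ref{th 4.11}), and $\widehat{Z}\equiv0$ then follows from Lemma C.1 together with the vanishing-constant limit $m\to+\infty$. Your instinct that reconciling the $z$-Lipschitz growth with mere $L^{1}$/class-(D) integrability is the crux was correct, but the resolution is the $\alpha$-power bootstrap \emph{above} $L^{1}$, not a small-interval absorption below it.
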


\begin{proof}
Let $(Y,Z)$ and $(\widetilde{Y},\widetilde{Z})$ be two adapted solutions to mean-field BSDE \eqref{eq 1.2} such that $Y$ and $\widetilde{Y}$ are both of the class (D), and both $Z$ and $\widetilde{Z}$ belong to $\mathcal{H}^{\beta}_{\mathbb{F}}(0,T;\mathbb{R}^{n\times d})$ for some $\beta\in(\alpha,1)$. For simplicity of notation, we put $\widehat{Y}:=Y-\widetilde{Y}$ and $\widehat{Z}:=Z-\widetilde{Z}.$

\noindent \textbf{Step 1.}\ Let us first show that $\widehat{Y}\in \mathcal{S}_{\mathbb{F}}^{p}(0,T;\mathbb{R}^{n})$ with $p:=\beta/\alpha>1$. For this purpose, we introduce the following sequence of stopping times: For all $n\ge1$,
\begin{equation*}
\begin{aligned}
\tau_{n}:=\inf\Big\{t\in[0,T]:\int_{0}^{t}\big(|Z_{s}|^{2}+|\widetilde{Z}_{s}|^{2}\big)ds\ge n\Big\}\wedge T.
\end{aligned}
\end{equation*}

\noindent From the It\^{o}-Tanaka formula (see also Briand et al. \cite[Corollary 2.3]{BD03}) we deduce that, for all $t\in[0,T]$,
\begin{equation*}\label{eq 4.2}\tag{4.2}
\begin{aligned}
|\widehat{Y}_{t\wedge \tau_{n}}|\le|\widehat{Y}_{\tau_{n}}|+\int_{t\wedge \tau_{n}}^{\tau_{n}}\big\langle\varrho (\widehat{Y}_{s}),f(s,Y_{s},Z_{s},\mathbb{P}_{Y_{s}})-f(s,\widetilde{Y}_{s},\widetilde{Z}_{s},\mathbb{P}_{\widetilde{Y}_{s}})\big\rangle ds
-\int_{t\wedge \tau_{n}}^{\tau_{n}}\langle\varrho (\widehat{Y}_{s}),\widehat{Z}_{s} dW_{s}\rangle,
\end{aligned}
\end{equation*}
where $\varrho(x):=|x|^{-1}x,\ \text{if}\ x\neq0;\ \varrho(x):=0,\ \text{if}\ x=0$. In view of assumptions \textbf{(H1)}--\textbf{(H3)}, we get that, $d\mathbb{P}\times ds\text{-}a.e.$,
\begin{equation*}\label{eq 4.3}\tag{4.3}
\begin{aligned}
&\hspace{1.3em}\big\langle\varrho (\widehat{Y}_{s}),f(s,Y_{s},Z_{s},\mathbb{P}_{Y_{s}})-f(s,\widetilde{Y}_{s},\widetilde{Z}_{s},\mathbb{P}_{\widetilde{Y}_{s}})\big\rangle\\
&\le\big\langle\varrho (\widehat{Y}_{s}),f(s,Y_{s},Z_{s},\mathbb{P}_{Y_{s}})-f(s,\widetilde{Y}_{s},Z_{s},\mathbb{P}_{Y_{s}})\big\rangle+|f(s,\widetilde{Y}_{s},Z_{s},\mathbb{P}_{Y_{s}})-f(s,\widetilde{Y}_{s},Z_{s},\mathbb{P}_{\widetilde{Y}_{s}})|\\
&\hspace{2em}+|f(s,\widetilde{Y}_{s},Z_{s},\mathbb{P}_{\widetilde{Y}_{s}})-f(s,\widetilde{Y}_{s},\widetilde{Z}_{s},\mathbb{P}_{\widetilde{Y}_{s}})|\\
&\le \eta(|\widehat{Y}_{s}|)+K\mathbb{E}[|\widehat{Y}_{s}|]+2M(\vartheta_{s}+|\widetilde{Y}_{s}|+\mathbb{E}[|\widetilde{Y}_{s}|]+|Z_{s}|+|\widetilde{Z}_{s}|)^{\alpha}.
\end{aligned}
\end{equation*}

\noindent Inserting \eqref{eq 4.3} in \eqref{eq 4.2} and then taking the conditional expectation gives, for all $n\ge1$ and $t\in[0,T]$,
\begin{equation*}\label{eq 4.4}\tag{4.4}
\begin{aligned}
|\widehat{Y}_{t\wedge \tau_{n}}|\le\mathbb{E}_{t}\Big[|\widehat{Y}_{\tau_{n}}|+\int_{t\wedge \tau_{n}}^{\tau_{n}}\big(\eta(|\widehat{Y}_{s}|)+K\mathbb{E}[|\widehat{Y}_{s}|]\big)ds\Big]+Q(t),
\end{aligned}
\end{equation*}
where $Q(t):=2M\mathbb{E}_{t}\big[\int_{0}^{T}\big(\vartheta_{s}+|\widetilde{Y}_{s}|+\mathbb{E}[|\widetilde{Y}_{s}|]+|Z_{s}|+|\widetilde{Z}_{s}|\big)^{\alpha}ds\big].$ As $\widehat{Y}$ belongs to the class (D) and $\tau_{n}\uparrow T$, as $n$ goes to infinity, we can take the limit $n\rightarrow +\infty$ in \eqref{eq 4.4}. Then, thanks to Lebesgue's dominated convergence theorem and \eqref{eq 4.1} in Remark \ref{re 4.2}, we obtain, for all $t\in[0,T]$,
\begin{equation*}
\begin{aligned}
|\widehat{Y}_{t}|\le \mathbb{E}_{t}\Big[\int_{t}^{T}\big(\eta(|\widehat{Y}_{s}|)+K\mathbb{E}[|\widehat{Y}_{s}|]\big)ds\Big]+Q(t)\le \int_{t}^{T}\big(R\mathbb{E}_{t}[|\widehat{Y}_{s}|]+K\mathbb{E}[|\widehat{Y}_{s}|]\big)ds+RT+Q(t).
\end{aligned}
\end{equation*}

\noindent Consequently, $\mathbb{E}_{t}[|\widehat{Y}_{r}|]\le \int_{r}^{T}\big(R\mathbb{E}_{t}[|\widehat{Y}_{s}|]+K\mathbb{E}[|\widehat{Y}_{s}|]\big)ds+RT+Q(t),\ r\in[t,T]$. Then Gronwall's inequality yields that
\begin{equation*}
\begin{aligned}
\mathbb{E}_{t}[|\widehat{Y}_{r}|]\le \Big(\int_{r}^{T}K\mathbb{E}[|\widehat{Y}_{s}|]ds+RT+Q(t)\Big)e^{RT},\ r\in[t,T].
\end{aligned}
\end{equation*}

\noindent In particular, by setting $r=t$ and noting that $\widehat{Y}$ is of the class (D) which guarantees that $\gamma:=\sup_{s\in[0,T]}\mathbb{E}[|\widehat{Y}_{s}|]<+\infty$, we conclude that
\begin{equation*}\label{eq 4.5}\tag{4.5}
\begin{aligned}
|\widehat{Y}_{t}|\le \big(\gamma KT+RT+Q(t)\big)e^{RT},\ t\in[0,T].
\end{aligned}
\end{equation*}

\noindent Recall that $\widehat{Y}$ is of the class (D), $Z$ and $\widetilde{Z}$ belong to $\mathcal{H}^{\beta}_{\mathbb{F}}(0,T;\mathbb{R}^{n\times d})$ with $\beta>\alpha$, and $\vartheta\in\mathcal{L}_{\mathbb{F}}^{1}(0,T;\mathbb{R})$. Thus, it follows from Doob's maximal inequality that
\begin{equation*}\label{eq 4.6}\tag{4.6}
\begin{aligned}
\mathbb{E}\big[\sup\limits_{t\in[0,T]}|Q(t)|^{\beta/\alpha}\big]\le \Big(\frac{\beta}{\beta-\alpha}\Big)^{\beta/\alpha}\mathbb{E}\big[|Q(T)|^{\beta/\alpha}\big]<+\infty.
\end{aligned}
\end{equation*}

\noindent Hence, combining this with \eqref{eq 4.5} yields that $\widehat{Y}\in \mathcal{S}_{\mathbb{F}}^{\beta/\alpha}(0,T;\mathbb{R}^{n})$.

\noindent \textbf{Step 2.}\ First, notice that $(\widehat{Y},\widehat{Z})$ solves the following BSDE:
\begin{equation*}\label{eq 4.7}\tag{4.7}
\begin{aligned}
\widehat{Y}_{t}=\int_{t}^{T}\Delta f(s,\widehat{Y}_{s},\widehat{Z}_{s})ds-\int_{t}^{T}\widehat{Z}_{s}dW_{s},\ t\in[0,T],
\end{aligned}
\end{equation*}
where $\Delta f(t,y,z):=f(t,y+\widetilde{Y}_{t},z+\widetilde{Z}_{t},\mathbb{P}_{Y_{t}})-f(t,\widetilde{Y}_{t},\widetilde{Z}_{t},\mathbb{P}_{\widetilde{Y}_{t}})$, for all $(t,y,z)\in [0,T]\times\mathbb{R}^{n}\times\mathbb{R}^{n\times d}$.
It follows from assumptions \textbf{(H1)}--\textbf{(H3)} on the generator $f$ along with Remarks \ref{re 4.1}--\ref{re 4.2} that there exists a constant $R_{2}>0$ such that, $d\mathbb{P}\times dt\text{-}a.e.$, for all $t\in[0,T],\ y\in \mathbb{R}^{n}\ \text{and}\ z\in\mathbb{R}^{n\times d}$,
\begin{equation*}\label{eq 4.8}\tag{4.8}
\begin{aligned}
\big\langle y,\Delta f(t,y,z)\big\rangle &\le \eta_{2}(|y|^{2})+L|y||z|+K|y|W_{1}(\mathbb{P}_{Y_{t}},\mathbb{P}_{\widetilde{Y}_{t}})\\
&\le R_{2}|y|^{2}+L|y||z|+K|y|W_{1}(\mathbb{P}_{Y_{t}},\mathbb{P}_{\widetilde{Y}_{t}})+R_{2}.
\end{aligned}
\end{equation*}
Thus, as $\widehat{Y}\in \mathcal{S}_{\mathbb{F}}^{\beta/\alpha}(0,T;\mathbb{R}^{n})$, Lemma \ref{le 4.5} with $\lambda_{1}=R_{2},\ \lambda_{2}=L,\ g_{t}=KW_{1}(\mathbb{P}_{Y_{t}},\mathbb{P}_{\widetilde{Y}_{t}}),\ \theta_{t}\equiv R_{2}$, implies $\widehat{Z}\in \mathcal{H}_{\mathbb{F}}^{p_{0}}(0,T;\mathbb{R}^{n\times d})$ with $p_{0}=\beta/\alpha>1$. In particular, $(\widehat{Y},\widehat{Z})\in \mathcal{S}^{p}_{\mathbb{F}}(0,T;\mathbb{R}^{n})\times \mathcal{H}^{p}_{\mathbb{F}}(0,T;\mathbb{R}^{n\times d})$, for all $p\in(0,1)$, is a solution to BSDE \eqref{eq 4.7}.

Moreover, from assumptions \textbf{(H1)}--\textbf{(H3)} and Remark \ref{re 4.1}, we also derive that there exists a nondecreasing and concave function $\eta_{p_{0}}(\cdot):[0,+\infty)\rightarrow[0,+\infty)$ with $\eta_{p_{0}}(0)=0$, $\eta_{p_{0}}(u)>0,\ u>0$  and $\int_{0^{+}}\frac{du}{\eta_{p_{0}}(u)}=+\infty$, such that, $d\mathbb{P}\times dt\text{-}a.e.$, for all $t\in[0,T],\ y\in\mathbb{R}^{n}$ and $z\in\mathbb{R}^{n\times d}$,
\begin{equation*}
\begin{aligned}
|y|^{p_{0}-1}\big\langle \varrho(y),\Delta f(t,y,z)\big\rangle %&\le|y|^{p-1}\big\langle\varrho(y),f(t,y+\widetilde{Y}_{t},z+\widetilde{Z}_{t},\mathbb{P}_{Y_{t}})-f(t,\widetilde{Y}_{t},z+\widetilde{Z}_{t},\mathbb{P}_{Y_{t}})\big\rangle\\
%&\hspace{1em}+|y|^{p-1}|f(t,\widetilde{Y}_{t},z+\widetilde{Z}_{t},\mathbb{P}_{Y_{t}})-f(t,\widetilde{Y}_{t},\widetilde{Z}_{t},\mathbb{P}_{Y_{t}})|\\
%&\hspace{1em}+|y|^{p-1}|f(t,\widetilde{Y}_{t},\widetilde{Z}_{t},\mathbb{P}_{Y_{t}})-f(t,\widetilde{Y}_{t},\widetilde{Z}_{t},\mathbb{P}_{\widetilde{Y}_{t}})|\\
\le \eta_{p_{0}}(|y|^{p_{0}})+L|y|^{p_{0}-1}|z|+K|y|^{p_{0}-1}W_{1}(\mathbb{P}_{Y_{t}},\mathbb{P}_{\widetilde{Y}_{t}}).
\end{aligned}
\end{equation*}

\noindent Applying Lemma \ref{le 4.6} with $\varphi(\cdot)=\eta_{p_{0}}(\cdot),\ \lambda=L,\ g_{t}=KW_{1}(\mathbb{P}_{Y_{t}},\mathbb{P}_{\widetilde{Y}_{t}})$, and using H\"{o}lder's inequality, there exists a constant $C>0$ depending only on $p_{0},L,K,T$ such that, for all $t\in[0,T]$,
\begin{equation*}
\begin{aligned}
\mathbb{E}\big[\sup\limits_{s\in[t,T]}|\widehat{Y}_{s}|^{p_{0}}\big]\le C\Big(\int_{t}^{T}\eta_{p_{0}}\big(\mathbb{E}[|\widehat{Y}_{s}|^{p_{0}}]\big)ds+\int_{t}^{T}\mathbb{E}[|\widehat{Y}_{s}|^{p_{0}}]ds\Big)
\le C\int_{t}^{T}\widetilde{\eta}_{p_{0}}\Big(\mathbb{E}\big[\sup\limits_{u\in[s,T]}|\widehat{Y}_{u}|^{p_{0}}\big]\Big)ds,
\end{aligned}
\end{equation*}
where $\widetilde{\eta}_{p_{0}}(x):=\eta_{p_{0}}(x)+x,\ x\ge0$. It's easy to verify that $\widetilde{\eta}_{p_{0}}(\cdot)$ is still a nondecreasing and concave function with $\widetilde{\eta}_{p_{0}}(0)=0,\ \widetilde{\eta}_{p_{0}}(u)>0,\ u>0$ and $\int_{0^{+}}\frac{du}{\widetilde{\eta}_{p_{0}}(u)}=+\infty$ (we refer to Appendix in \cite{F13}).

Thus it follows from Bihari's inequality (see Lemma \ref{le 2.4}) that
\begin{equation*}\label{eq 4.9}\tag{4.9}
\mathbb{E}\big[\sup\limits_{t\in[0,T]}|Y_{t}-\widetilde{Y}_{t}|^{p_{0}}\big]=\mathbb{E}\big[\sup\limits_{t\in[0,T]}|\widehat{Y}_{t}|^{p_{0}}\big]=0.
\end{equation*}

\noindent Moreover, thanks to \eqref{eq 4.8}, Lemma \ref{le 2.3} and Remark \ref{re 4.2}, the generator $\Delta f$ of BSDE \eqref{eq 4.7} satisfies that, $d\mathbb{P}\times dt\text{-}a.e.$, for all $t\in[0,T],\ y\in\mathbb{R}^{n}$, $z\in\mathbb{R}^{n\times d}$, and $m\ge1$,
\begin{equation*}
\begin{aligned}
\big\langle y,\Delta f(t,y,z)\big\rangle \le (m+2R_{2})|y|^{2}+\eta_{2}\Big(\frac{2R_{2}}{m+2R_{2}}\Big)+L|y||z|+K|y|W_{1}(\mathbb{P}_{Y_{t}},\mathbb{P}_{\widetilde{Y}_{t}}).
\end{aligned}
\end{equation*}

\noindent Then applying Lemma \ref{le 4.5} with $\lambda_{1}=m+2R_{2},\ \lambda_{2}=L,\ g_{t}=KW_{1}(\mathbb{P}_{Y_{t}},\mathbb{P}_{\widetilde{Y}_{t}}),\ \theta_{t}\equiv\eta_{2}(\frac{2R_{2}}{m+2R_{2}})$, this shows that there exist two positive constants $C>0$ depending only on $p_{0},m,R_{2},L,K,T$ and $\widetilde{C}_{p_{0}}>0$ depending only on $p_{0}$, such that, for all $m\ge1$,

\begin{equation*}\label{eq 4.10}\tag{4.10}
\mathbb{E}\Big[\Big(\int_{0}^{T}|\widehat{Z}_{s}|^{2}ds\Big)^{\frac{p_{0}}{2}}\Big]\le C\mathbb{E}\big[\sup\limits_{t\in[0,T]}|\widehat{Y}_{t}|^{p_{0}}\big]+\widetilde{C}_{p_{0}}\Big(\eta_{2}\Big(\frac{2R_{2}}{m+2R_{2}}\Big)\cdot T\Big)^{\frac{p_{0}}{2}}.
\end{equation*}

\noindent Therefore, by (4.9) and the fact that $\eta_{2}(\cdot)$ is a continuous function with $\eta_{2}(0)=0$, letting $m\rightarrow +\infty$ in the last inequality we obtain that
\begin{equation*}\label{eq 4.11}\tag{4.11}
\mathbb{E}\Big[\Big(\int_{0}^{T}|Z_{s}-\widetilde{Z}_{s}|^{2}ds\Big)^{\frac{p_{0}}{2}}\Big]=\mathbb{E}\Big[\Big(\int_{0}^{T}|\widehat{Z}_{s}|^{2}ds\Big)^{\frac{p_{0}}{2}}\Big]=0.
\end{equation*}
With \eqref{eq 4.9} and \eqref{eq 4.11}, the proof of the uniqueness is complete.
\end{proof}

We come now to our existence result.

\begin{theorem}\label{th 4.10}
Let assumptions \textbf{(H1)}--\textbf{(H4)} hold true. In the case where $\alpha$ takes values in $[\frac{1}{2},1)$ in \textbf{(H4)}, we further suppose that $\eta(\cdot)$ in \textbf{(H1)} satisfies that there exists $p^{\ast}>1$ such that
\begin{equation*}\label{eq 4.12}\tag{4.12}
\int_{0+}\frac{u^{p^{\ast}-1}}{\eta^{p^{\ast}}(u)}du=+\infty.
\end{equation*}
\end{theorem}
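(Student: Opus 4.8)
The plan is to show that \eqref{eq 1.2} admits an $L^{1}$ solution by a Picard scheme that freezes the law argument, so that each step reduces to a classical (non--mean--field) multi--dimensional BSDE with integrable data. I would fix $(Y^{0},Z^{0})\equiv(0,0)$ and, given the previous iterate $Y^{k}$, define $(Y^{k+1},Z^{k+1})$ as the solution of
\[
Y^{k+1}_{t}=\xi+\int_{t}^{T}f(s,Y^{k+1}_{s},Z^{k+1}_{s},\mathbb{P}_{Y^{k}_{s}})\,ds-\int_{t}^{T}Z^{k+1}_{s}\,dW_{s},\quad t\in[0,T].
\]
With the measure flow $(\mathbb{P}_{Y^{k}_{s}})_{s}$ frozen, the generator $(\omega,t,y,z)\mapsto f(\omega,t,y,z,\mathbb{P}_{Y^{k}_{t}})$ satisfies a one--sided Osgood condition in $y$ by \textbf{(H1)}, is Lipschitz with a sub--linear growth of order $\alpha$ in $z$ by \textbf{(H3)}, and the frozen law contributes only the extra $\mathbb{F}$--adapted term $KW_{1}(\mathbb{P}_{Y^{k}_{t}},\delta_{0})\le K\mathbb{E}[|Y^{k}_{t}|]$ to the growth in \textbf{(H4)}. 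Hence each step is uniquely solvable with an $L^{1}$ solution by the non--mean--field results quoted in the appendix (Propositions~1--4 of \cite{F18} and Proposition~3.2 of \cite{BD03}).

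Next I would establish uniform--in--$k$ a priori bounds. Using those appendix estimates together with Lemma~\ref{le 4.5}, one controls $Y^{k}$ in the class (D) and in $\mathcal{S}^{p}_{\mathbb{F}}$ and $Z^{k}$ in $\mathcal{H}^{p}_{\mathbb{F}}$ for all $p\in(0,1)$, absorbing the mean--field term $K\mathbb{E}[|Y^{k}_{s}|]$ through the (backward) Gronwall inequality exactly as in \eqref{eq 3.10}. To run the convergence step in a space of index strictly larger than $\alpha$, I would upgrade the $Z$--integrability to $\mathcal{H}^{p_{0}}_{\mathbb{F}}$ with some $p_{0}>1$: when $\alpha\in(0,\tfrac12)$ this follows directly from the sub--linear growth in \textbf{(H3)}, whereas when $\alpha\in[\tfrac12,1)$ it is precisely the supplementary requirement \eqref{eq 4.12} on $\eta$ that permits the a priori estimate to be run at the power $p^{\ast}>1$, yielding the required integrability.

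The heart of the argument is the convergence of the scheme. Writing $\widehat{Y}^{k+1}:=Y^{k+1}-Y^{k}$ and $\widehat{Z}^{k+1}:=Z^{k+1}-Z^{k}$, this pair solves a BSDE whose generator difference is bounded, via \textbf{(H1)}--\textbf{(H3)} and Remarks~\ref{re 4.1}--\ref{re 4.2}, by $\eta_{p_{0}}$ in $y$, by $L|\widehat{Z}^{k+1}|$ in $z$, and by $g_{s}:=KW_{1}(\mathbb{P}_{Y^{k}_{s}},\mathbb{P}_{Y^{k-1}_{s}})\le K\mathbb{E}[|\widehat{Y}^{k}_{s}|]$ in the law. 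Applying Lemma~\ref{le 4.6} with $\varphi=\eta_{p_{0}}$, $\lambda=L$ and this $g$, and setting $A_{k}(t):=\mathbb{E}[\sup_{s\in[t,T]}|\widehat{Y}^{k}_{s}|^{p_{0}}]$, I would obtain the coupled inequality
\[
A_{k+1}(t)\le C\int_{t}^{T}\eta_{p_{0}}\big(A_{k+1}(s)\big)\,ds+C\int_{t}^{T}A_{k}(s)\,ds,\quad t\in[0,T].
\]
Since the Osgood nonlinearity rules out a naive contraction, I would pass to $B_{k}(t):=\sup_{j\ge k}A_{j}(t)$, which decreases to $\bar A(t):=\limsup_{k}A_{k}(t)$; taking the supremum over $j\ge k-1$ and letting $k\to\infty$ (monotone convergence and continuity of $\eta_{p_{0}}$) gives $\bar A(t)\le C\int_{t}^{T}\widetilde\eta_{p_{0}}(\bar A(s))\,ds$ with $\widetilde\eta_{p_{0}}(u):=\eta_{p_{0}}(u)+Cu$ still satisfying the hypotheses of Lemma~\ref{le 2.4}. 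Bihari's inequality with $v_{0}=0$ then forces $\bar A\equiv 0$, so $(Y^{k})$ is Cauchy in $\mathcal{S}^{p_{0}}_{\mathbb{F}}$; feeding this back through Lemma~\ref{le 4.5} shows $(Z^{k})$ is Cauchy in $\mathcal{H}^{p}_{\mathbb{F}}$ for $p<1$. Here a partition of $[0,T]$ into finitely many subintervals is used to keep the constants in the $Z$--estimate and in the bootstrap \eqref{eq 4.12} under control and to establish $\mathcal{S}^{p_{0}}$--membership piecewise, concatenating the limits backward from $T$.

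Finally, with limits $Y^{k}\to Y$ and $Z^{k}\to Z$, the $L^{1}$--convergence yields $W_{1}(\mathbb{P}_{Y^{k}_{s}},\mathbb{P}_{Y_{s}})\to 0$, and by the continuity in \textbf{(H4)} together with the Lipschitz bounds \textbf{(H2)}--\textbf{(H3)} the generators converge $d\mathbb{P}\times dt$--a.e.; dominating through the sub--linear growth of \textbf{(H3)} and the integrability in \textbf{(H4)}, a dominated--convergence argument lets me pass to the limit in the integral form and identify $(Y,Z)$ as an $L^{1}$ solution of \eqref{eq 1.2}. I expect the main obstacle to be exactly the coexistence of the non--Lipschitz Osgood dependence in $y$ with the mean--field coupling in the $L^{1}$ framework: one cannot contract, so convergence must be extracted from the $\limsup$ device and Bihari's inequality, while simultaneously securing $Z$ in a space of index $>\alpha$ (which is where \eqref{eq 4.12} becomes indispensable for $\alpha\ge\tfrac12$) so that the a priori estimates of Lemmas~\ref{le 4.5}--\ref{le 4.6} remain applicable throughout the iteration.
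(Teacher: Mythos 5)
There is a genuine gap at the heart of your convergence argument. Your scheme measures convergence through the \emph{consecutive} differences $\widehat{Y}^{k+1}=Y^{k+1}-Y^{k}$, and the $\limsup$/Bihari device applied to $A_{k}(t)=\mathbb{E}\big[\sup_{s\in[t,T]}|\widehat{Y}^{k}_{s}|^{p_{0}}\big]$ yields only $A_{k}(t)\rightarrow0$, i.e.\ that successive increments vanish. Under the Osgood condition there is no contraction and hence no summable rate, so vanishing consecutive increments do \emph{not} imply that $(Y^{k})_{k}$ is Cauchy (think of partial sums of the harmonic series: increments tend to zero, the sequence diverges). This is exactly why the paper's proof works throughout with the \emph{two-index} differences $\widehat{Y}^{n,k}:=Y^{n+k}-Y^{n}$, first establishes the uniform bound $\sup_{n\ge2}\sup_{k\ge1}a_{n,k}(t)<+\infty$ on a small terminal interval (see \eqref{eq 4.23}--\eqref{eq 4.25}), and only then applies Fatou and Bihari to $\limsup_{n}\sup_{k\ge1}a_{n,k}$, which directly gives $\lim_{n}\sup_{k\ge1}\Vert Y^{n+k}-Y^{n}\Vert=0$ --- the genuine Cauchy property. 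Your auxiliary quantity $B_{k}(t)=\sup_{j\ge k}A_{j}(t)$ has, in addition, an unaddressed finiteness problem: the individual iterates are controlled only in class (D) and in $\mathcal{S}^{p}_{\mathbb{F}}$ for $p\in(0,1)$, and a bound at the power $p_{0}>1$ holds only for differences; securing it uniformly in $(n,k)$ is precisely what the time-partition with coefficients $\le\frac{1}{8}$ achieves in \eqref{eq 4.21}--\eqref{eq 4.25}. So the partition is not merely ``to keep constants under control'': it is the mechanism behind both the uniform bound and the absorption constant $\frac{16}{7}$ in the $\limsup$ inequality, and without it the $\limsup$ step cannot even be started.

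Two secondary points. First, you misattribute the role of \eqref{eq 4.12}: in the paper the $\mathcal{S}^{r}_{\mathbb{F}}$-membership of the differences comes in \emph{both} cases from the sublinear growth in \textbf{(H3)} and Doob's inequality (any $r$ with $1<r<\frac{1}{\alpha}$); \eqref{eq 4.12} is invoked, via \cite[Proposition 1]{F15}, to furnish the $p^{\ast}$-order one-sided Mao condition \eqref{eq 4.28}. This is needed because for $\alpha\ge\frac{1}{2}$ the admissible index $r<\frac{1}{\alpha}\le2$ falls below $2$, and the only appendix estimate giving simultaneous $\mathcal{S}^{r}$- and $\mathcal{H}^{r}$-control for $r<2$ is Lemma \ref{le 4.8}, whose hypothesis is the normalized form $\varpi^{\frac{1}{p}}(|y|^{p})+\lambda|z|+g_{t}$ rather than the weak-monotonicity form of Lemma \ref{le 4.6} (Lemma \ref{le 4.7} covers only $p=2$). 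Second, your iteration freezes only the law, keeping $Z^{k+1}$ implicit, whereas the paper's scheme \eqref{eq 4.14} freezes both $Z^{n-1}$ and $\mathbb{P}_{Y^{n-1}}$, making the per-step generator $z$-free; your variant is legitimate (it stays within the scope of \cite{F18}) and removes the $Z$-coupling from the recursion, but it does not repair the Cauchy gap. Replacing consecutive by two-index differences and restoring the paper's uniform estimate would make the remainder of your outline (per-step solvability, Lemma \ref{le 4.5} for the $Z$-component, limit passage) go through.
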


\noindent Then, for every given $\xi\in L_{\mathcal{F}_{T}}^{1}(\Omega;\mathbb{R}^{n})$, the mean-field BSDE \eqref{eq 1.2} possesses an $L^{1}$ solution $(Y,Z)$, i.e., $Y$ is of the class (D) and $(Y,Z)\in \mathcal{S}_{\mathbb{F}}^{p}(0,T;\mathbb{R}^{n})\times \mathcal{H}_{\mathbb{F}}^{p}(0,T;\mathbb{R}^{n\times d})$, for all $p\in(0,1)$.

\begin{proof}
\textbf{Step 1.}\ Given a pair of processes $(U,V)\in \mathcal{S}_{\mathbb{F}}^{p}(0,T;\mathbb{R}^{n})\times\mathcal{H}_{\mathbb{F}}^{p}(0,T;\mathbb{R}^{n\times d})$, for all $p\in(0,1)$, where $U$ is of the class (D), we consider the BSDE
\begin{equation*}\label{eq 4.13}\tag{4.13}
Y_{t}=\xi+\int_{t}^{T}f(s,Y_{s},V_{s},\mathbb{P}_{U_{s}})ds-\int_{t}^{T}Z_{s}dW_s,\ t\in[0,T].
\end{equation*}

\noindent Obviously, by assumption \textbf{(H1)}, we have $d\mathbb{P}\times dt\text{-}a.e.$, for all $t\in[0,T],\ \text{and}\ y,\ \bar{y}\in\mathbb{R}^{n}$,
\begin{equation*}
\begin{aligned}
\big\langle\varrho(y-\bar{y}),f(t,y,V_{t},\mathbb{P}_{U_{t}})-f(t,\bar{y},V_{t},\mathbb{P}_{U_{t}})\big\rangle\le \eta\big(|y-\bar{y}|\big).
\end{aligned}
\end{equation*}

\noindent In addition, by using assumptions \textbf{(H2)}--\textbf{(H4)} and H\"{o}lder's inequality, we get, for all $r\ge0$,
\begin{equation*}
\begin{aligned}
&\hspace{1.35em}\mathbb{E}\Big[\int_{0}^{T}\sup\limits_{|y|\le r}|f(t,y,V_{t},\mathbb{P}_{U_{t}})|dt\Big]\\
&\le\mathbb{E}\Big[\int_{0}^{T}\sup\limits_{|y|\le r}|f(t,y,0,\delta_{0})|dt\Big]+KT\sup\limits_{t\in[0,T]}\mathbb{E}[|U_{t}|]+M\Big(\mathbb{E}\Big[\int_{0}^{T}\vartheta_{t}dt\Big]\Big)^{\alpha}T^{1-\alpha}\\
&\hspace{2em}+MTr^{\alpha}+M\mathbb{E}\Big[\Big(\int_{0}^{T}|V_{s}|^{2}ds\Big)^{\frac{\alpha}{2}}\Big]T^{1-\frac{\alpha}{2}}<+\infty.
\end{aligned}
\end{equation*}

\noindent Hence, it follows from \cite[Theorems 1--2]{F18} that BSDE \eqref{eq 4.13} admits a unique $L^{1}$ solution $(Y,Z)$, that is, $Y$ is of the class (D) and $(Y,Z)\in \mathcal{S}_{\mathbb{F}}^{p}(0,T;\mathbb{R}^{n})\times \mathcal{H}_{\mathbb{F}}^{p}(0,T;\mathbb{R}^{n\times d})$, for all $p\in(0,1)$.

\noindent\textbf{Step 2.}\ Step 1 enables us to formulate the following Picard iteration procedure: We put as usual $(Y^{0},Z^{0})=(0,0)$ and define recursively the sequence of processes $(Y^{n},Z^{n}),\ n\ge1$, as the unique $L^{1}$ solution to the following BSDE:
\begin{equation*}\label{eq 4.14}\tag{4.14}
Y_{t}^{n}=\xi+\int_{t}^{T}f(s,Y_{s}^{n},Z_{s}^{n-1},\mathbb{P}_{Y_{s}^{n-1}})ds-\int_{t}^{T}Z_{s}^{n}dW_s,\ t\in[0,T].
\end{equation*}

%\noindent Moreover, following Step 1, we easily see that for all $n\ge1$, $Y^{n}$ belongs to the calss (D) and $(Y^{n},Z^{n})\in \mathcal{S}_{\mathbb{F}}^{p}(0,T;\mathbb{R}^{n})\times \mathcal{H}_{\mathbb{F}}^{p}(0,T;\mathbb{R}^{n\times d})$ for all $p\in(0,1)$.

\noindent In what follows, for all $n,k\ge1$, we set $\widehat{Y}^{n,k}:=Y^{n+k}-Y^{n}$ and $\widehat{Z}^{n,k}:=Z^{n+k}-Z^{n}$. By a similar argument as the proof of \eqref{eq 4.5} in Theorem \ref{th 4.9}, using assumptions \textbf{(H1)}--\textbf{(H3)}, we prove that, for $\gamma^{n,k}:=\sup_{s\in[0,T]}\mathbb{E}[|\widehat{Y}_{s}^{n,k}|]$ and for all $n,k\ge1$,
\begin{equation*}
\begin{aligned}
|\widehat{Y}_{t}^{n,k}|\le \big(\gamma^{n-1,k} KT+RT+Q^{n,k}(t)\big)e^{RT},\ t\in[0,T],
\end{aligned}
\end{equation*}
where $Q^{n,k}(t):=2M\mathbb{E}_{t}\big[\int_{0}^{T}\big(\vartheta_{s}+|Y_{s}^{n}|+\mathbb{E}[|Y_{s}^{n-1}|]+|Z_{s}^{n-1}|+|Z^{n+k-1}_{s}|\big)^{\alpha}ds\big]\in \mathcal{S}_{\mathbb{F}}^{r}(0,T;\mathbb{R})$, given that $\alpha r<1$ with $r>1$. Then we obtain that, for all $n,k\ge1$, $(\widehat{Y}_{t}^{n,k})_{t\in[0,T]}\in \mathcal{S}_{\mathbb{F}}^{r}(0,T;\mathbb{R}^{n})$, under the condition $\alpha r<1$ for $r>1$. In the following, we discuss two cases separately: (i)\ $\alpha\in(0,1/2)$;\ (ii)\ $\alpha\in[1/2,1)$.

\noindent\textbf{Step 3.} We first study the case $\alpha\in(0,1/2)$. In this case, by choosing $r=2$, we have
\begin{equation*}\label{eq 4.15}\tag{4.15}
\begin{aligned}
(\widehat{Y}_{t}^{n,k})_{t\in[0,T]}\in \mathcal{S}_{\mathbb{F}}^{2}(0,T;\mathbb{R}^{n}),\ n,k\ge1.
\end{aligned}
\end{equation*}

\noindent Observe that $(\widehat{Y}^{n,k},\widehat{Z}^{n,k})$ is a solution to the BSDE
\begin{equation*}\label{eq 4.16}\tag{4.16}
\begin{aligned}
\widehat{Y}_{t}^{n,k}=\int_{t}^{T}\widehat{f}^{n,k}(s,\widehat{Y}_{s}^{n,k})ds-\int_{t}^{T}\widehat{Z}_{s}^{n,k}dW_{s},\ t\in[0,T],
\end{aligned}
\end{equation*}
where $\widehat{f}^{n,k}(s,y):=f(s,y+Y_{s}^{n},Z_{s}^{n+k-1},\mathbb{P}_{Y_{s}^{n+k-1}})-f(s,Y_{s}^{n},Z_{s}^{n-1},\mathbb{P}_{Y_{s}^{n-1}})$, for all $(s,y)\in[0,T]\times\mathbb{R}^{n}$. From assumptions \textbf{(H1)}--\textbf{(H3)}, and the Remarks \ref{re 4.1}--\ref{re 4.2}, we get that there exists a nondecreasing and concave function $\eta_{2}(\cdot):[0,+\infty)\rightarrow[0,+\infty)$ with $\eta_{2}(0)=0$, $\eta_{2}(u)>0,\ u>0$  and $\int_{0^{+}}\frac{du}{\eta_{2}(u)}=+\infty$, such that, $d\mathbb{P}\times ds\text{-}a.e.$, for all $s\in[0,T]\ \text{and}\ y\in\mathbb{R}^{n}$,
\begin{equation*}\label{eq 4.17}\tag{4.17}
\begin{aligned}
&\langle y,\widehat{f}^{n,k}(s,y)\rangle\le\eta_{2}(|y|^{2})+2M|y|\big(\vartheta_{s}+|Y_{s}^{n}|+\mathbb{E}[|Y_{s}^{n-1}|]+|Z_{s}^{n-1}|+|Z_{s}^{n+k-1}|\big)^{\alpha}+K|y|\mathbb{E}[|\widehat{Y}_{s}^{n-1,k}|]\\
&\le R_{2}|y|^{2}+2M|y|\big(\vartheta_{s}+|Y_{s}^{n}|+\mathbb{E}[|Y_{s}^{n-1}|]+|Z_{s}^{n-1}|+|Z_{s}^{n+k-1}|\big)^{\alpha}+K|y|\mathbb{E}[|\widehat{Y}_{s}^{n-1,k}|]+R_{2},
\end{aligned}
\end{equation*}
and
\begin{equation*}\label{eq 4.18}\tag{4.18}
\begin{aligned}
\langle y,\widehat{f}^{n,k}(s,y)\rangle\le \eta_{2}(|y|^{2})+L|y||\widehat{Z}_{s}^{n-1,k}|+K|y|\mathbb{E}[|\widehat{Y}_{s}^{n-1,k}|].
\end{aligned}
\end{equation*}
By virtue of \eqref{eq 4.15} and \eqref{eq 4.17}, it follows from Lemma \ref{le 4.5} with $p=2,\ \lambda_{1}=R_{2},\ \lambda_{2}=0,\ g_{t}=2M\big(\vartheta_{t}+|Y_{t}^{n}|+\mathbb{E}[|Y_{t}^{n-1}|]+|Z_{t}^{n-1}|+|Z_{t}^{n+k-1}|\big)^{\alpha}+K\mathbb{E}[|\widehat{Y}_{t}^{n-1,k}|],\ \theta_{t}\equiv R_{2}$, that $\widehat{Z}^{n,k}\in \mathcal{H}_{\mathbb{F}}^{2}(0,T;\mathbb{R}^{n\times d})$. Hence, $(\widehat{Y}^{n,k},\widehat{Z}^{n,k})\in \mathcal{S}_{\mathbb{F}}^{2}(0,T;\mathbb{R}^{n})\times \mathcal{H}_{\mathbb{F}}^{2}(0,T;\mathbb{R}^{n\times d})$ is a solution of BSDE \eqref{eq 4.16}, for all $n,k\ge1$.

Moreover, taking into account \eqref{eq 4.18} and recalling Lemma \ref{le 4.7} with $\varphi(\cdot)=\eta_{2}(\cdot),\ \lambda=0,\ g_{t}=L|\widehat{Z}_{t}^{n-1,k}|+K\mathbb{E}[|\widehat{Y}_{t}^{n-1,k}|]$, we obtain the existence of a positive constant $C>0$, such that, for all $t\in[0,T]$,
\begin{equation*}\label{eq 4.19}\tag{4.19}
\begin{aligned}
&\mathbb{E}\big[\sup\limits_{s\in[t,T]}|\widehat{Y}_{s}^{n,k}|^{2}\big]+\mathbb{E}\Big[\int_{t}^{T}|\widehat{Z}_{s}^{n,k}|^{2}ds\Big]\le e^{C(T-t)}\bigg(\int_{t}^{T}\eta_{2}\Big(\mathbb{E}\big[\sup\limits_{u\in[s,T]}|\widehat{Y}_{u}^{n,k}|^{2}\big]\Big)ds\\
&\hspace{6em}+2L^{2}(T-t)\mathbb{E}\Big[\int_{t}^{T}|\widehat{Z}_{s}^{n-1,k}|^{2}ds\Big]+2K^{2}(T-t)^{2}\mathbb{E}\big[\sup\limits_{s\in[t,T]}|\widehat{Y}_{s}^{n-1,k}|^{2}\big]\bigg).
\end{aligned}
\end{equation*}

\noindent Next, we put $\varepsilon:=\min\big\{\frac{\ln 2}{C},\frac{1}{32L^{2}},\frac{1}{4\sqrt{2} K},\frac{\ln 2}{2R_{2}}\big\}\ \text{and}\ T_{j}:=(T-j\varepsilon)\vee 0,\ j\ge1.$ Thus, for all $t\in[T_{1},T]$, we have
\begin{equation*}\label{eq 4.20}\tag{4.20}
\begin{aligned}
e^{C(T-t)}\le2,\ \ 2e^{C(T-t)}L^{2}(T-t)\le\frac{1}{8},\ \ 2e^{C(T-t)}K^{2}(T-t)^{2}\le\frac{1}{8},\ \ e^{2R_{2}(T-t)}\le2.
\end{aligned}
\end{equation*}

\noindent Combining \eqref{eq 4.19}--\eqref{eq 4.20} gives, for all $t\in[T_{1},T]$,
\begin{equation*}\label{eq 4.21}\tag{4.21}
\begin{aligned}
&\mathbb{E}\big[\sup\limits_{s\in[t,T]}|\widehat{Y}_{s}^{n,k}|^{2}\big]+\mathbb{E}\Big[\int_{t}^{T}|\widehat{Z}_{s}^{n,k}|^{2}ds\Big]\le 2\int_{t}^{T}\eta_{2}\Big(\mathbb{E}\big[\sup\limits_{u\in[s,T]}|\widehat{Y}_{u}^{n,k}|^{2}\big]\Big)ds\\
&\hspace{7em}+\frac{1}{8}\mathbb{E}\Big[\int_{t}^{T}|\widehat{Z}_{s}^{n-1,k}|^{2}ds\Big]+\frac{1}{8}\mathbb{E}\big[\sup\limits_{s\in[t,T]}|\widehat{Y}_{s}^{n-1,k}|^{2}\big].
\end{aligned}
\end{equation*}

\noindent Thanks to Remark \ref{re 4.2} that $\eta_{2}(x)\le R_{2}x+R_{2}$, $x\ge0$, it follows from \eqref{eq 4.20}--\eqref{eq 4.21} and Gronwall's inequality that, for all $n\ge2,\ k\ge1$ and $t\in[T_{1},T]$,
\begin{equation*}\label{eq 4.22}\tag{4.22}
\begin{aligned}
&\hspace{2em}\mathbb{E}\big[\sup\limits_{s\in[t,T]}|\widehat{Y}_{s}^{n,k}|^{2}\big]+\mathbb{E}\Big[\int_{t}^{T}|\widehat{Z}_{s}^{n,k}|^{2}ds\Big]\\
&\le e^{2R_{2}(T-t)}\Big(2R_{2}T+\frac{1}{8}\mathbb{E}\Big[\int_{t}^{T}|\widehat{Z}_{s}^{n-1,k}|^{2}ds\Big]+\frac{1}{8}\mathbb{E}\big[\sup\limits_{s\in[t,T]}|\widehat{Y}_{s}^{n-1,k}|^{2}\big]\Big)\\
&\le 4R_{2}T+\frac{1}{4}\mathbb{E}\Big[\int_{t}^{T}|\widehat{Z}_{s}^{n-1,k}|^{2}ds\Big]+\frac{1}{4}\mathbb{E}\big[\sup\limits_{s\in[t,T]}|\widehat{Y}_{s}^{n-1,k}|^{2}\big].\\
\end{aligned}
\end{equation*}

\noindent In particular, by taking $n=2$ and $k=m-2$ in \eqref{eq 4.22}, we obtain that, for all $m\ge3$ and $t\in[T_{1},T]$,
\begin{equation*}
\begin{aligned}
&\hspace{2em}\mathbb{E}\big[\sup\limits_{s\in[t,T]}|Y_{s}^{m}-Y_{s}^{2}|^{2}\big]+\mathbb{E}\Big[\int_{t}^{T}|Z_{s}^{m}-Z_{s}^{2}|^{2}ds\Big]\\
%&\le 4R_{2}T+\frac{1}{4}\mathbb{E}\Big[\int_{t}^{T}|Z_{s}^{m-1}-Z_{s}^{1}|^{2}ds\Big]+\frac{1}{4}\mathbb{E}\big[\sup\limits_{s\in[t,T]}|Y_{s}^{m-1}-Y_{s}^{1}|^{2}\big].\\
&\le 4R_{2}T+\frac{1}{2}\mathbb{E}\Big[\int_{t}^{T}|Z_{s}^{2}-Z_{s}^{1}|^{2}ds\Big]+\frac{1}{2}\mathbb{E}\big[\sup\limits_{s\in[t,T]}|Y_{s}^{2}-Y_{s}^{1}|^{2}\big]\\
&\hspace{3em}+\frac{1}{2}\mathbb{E}\Big[\int_{t}^{T}|Z_{s}^{m-1}-Z_{s}^{2}|^{2}ds\Big]+\frac{1}{2}\mathbb{E}\big[\sup\limits_{s\in[t,T]}|Y_{s}^{m-1}-Y_{s}^{2}|^{2}\big],
\end{aligned}
\end{equation*}
which by induction leads to that, for all $t\in[T_{1},T]$,
\begin{equation*}\label{eq 4.23}\tag{4.23}
\begin{aligned}
&\hspace{2em}\sup\limits_{m\ge1}\Big(\mathbb{E}\big[\sup\limits_{s\in[t,T]}|Y_{s}^{m}-Y_{s}^{2}|^{2}\big]+\mathbb{E}\Big[\int_{t}^{T}|Z_{s}^{m}-Z_{s}^{2}|^{2}ds\Big]\Big)\\
&\le 8R_{2}T+\mathbb{E}\Big[\int_{0}^{T}|Z_{s}^{2}-Z_{s}^{1}|^{2}ds\Big]+\mathbb{E}\big[\sup\limits_{s\in[0,T]}|Y_{s}^{2}-Y_{s}^{1}|^{2}\big]<+\infty.
\end{aligned}
\end{equation*}

\noindent Moreover, note that for all $n\ge2,\ k\ge1$ and $t\in[T_{1},T]$,
\begin{equation*}\label{eq 4.24}\tag{4.24}
\begin{aligned}
&\hspace{2em}\frac{1}{4}\mathbb{E}\Big[\int_{t}^{T}|\widehat{Z}_{s}^{n-1,k}|^{2}ds\Big]+\frac{1}{4}\mathbb{E}\big[\sup\limits_{s\in[t,T]}|\widehat{Y}_{s}^{n-1,k}|^{2}\big]\\
&\le \frac{1}{2}\mathbb{E}\Big[\int_{t}^{T}\big(|Z_{s}^{n+k-1}-Z_{s}^{2}|^{2}+|Z_{s}^{n-1}-Z_{s}^{2}|^{2}\big)ds\Big]\\
&\hspace{2.6em}+\frac{1}{2}\mathbb{E}\big[\sup\limits_{s\in[t,T]}|Y_{s}^{n+k-1}-Y_{s}^{2}|^{2}\big]+\frac{1}{2}\mathbb{E}\big[\sup\limits_{s\in[t,T]}|Y_{s}^{n-1}-Y_{s}^{2}|^{2}\big]\\
&\le \sup\limits_{m\ge1}\Big(\mathbb{E}\big[\sup\limits_{s\in[t,T]}|Y_{s}^{m}-Y_{s}^{2}|^{2}\big]+\mathbb{E}\Big[\int_{t}^{T}|Z_{s}^{m}-Z_{s}^{2}|^{2}ds\Big]\Big).
\end{aligned}
\end{equation*}

\noindent Therefore, by combining \eqref{eq 4.22}--\eqref{eq 4.24}, we get, for all $t\in[T_{1},T]$,
\begin{equation*}\label{eq 4.25}\tag{4.25}
\begin{aligned}
&\hspace{2em}\sup\limits_{n\ge2}\sup\limits_{k\ge1}\Big(\mathbb{E}\big[\sup\limits_{s\in[t,T]}|\widehat{Y}_{s}^{n,k}|^{2}\big]+\mathbb{E}\Big[\int_{t}^{T}|\widehat{Z}_{s}^{n,k}|^{2}ds\Big]\Big)\\
&\le 12R_{2}T+\mathbb{E}\Big[\int_{0}^{T}|Z_{s}^{2}-Z_{s}^{1}|^{2}ds\Big]+\mathbb{E}\big[\sup\limits_{s\in[0,T]}|Y_{s}^{2}-Y_{s}^{1}|^{2}\big]<+\infty.
\end{aligned}
\end{equation*}

\noindent Let us put $a_{n,k}(t):=\mathbb{E}\big[\sup_{s\in[t,T]}|\widehat{Y}_{s}^{n,k}|^{2}\big]+\mathbb{E}\big[\int_{t}^{T}|\widehat{Z}_{s}^{n,k}|^{2}ds\big],\ t\in[T_{1},T].$ Then, from \eqref{eq 4.21} and the properties of $\eta_{2}(\cdot)$,
\begin{equation*}
\begin{aligned}
a_{n,k}(t)\le 2\int_{t}^{T}\eta_{2}\big(a_{n,k}(s)\big)ds+\frac{1}{8}a_{n-1,k}(t),\ t\in[T_{1},T],\ n\ge2,\ k\ge1,
\end{aligned}
\end{equation*}

\noindent and so also,
\begin{equation*}
\begin{aligned}
\sup\limits_{k\ge1}a_{n,k}(t)\le 2\int_{t}^{T}\eta_{2}\big(\sup\limits_{k\ge1}a_{n,k}(s)\big)ds+\frac{1}{8}\sup\limits_{k\ge1}a_{n-1,k}(t),\ t\in[T_{1},T],\ n\ge2.
\end{aligned}
\end{equation*}

\noindent Consequently, from Fatou's lemma, for all $t\in[T_{1},T]$,
\begin{equation*}
\begin{aligned}
\limsup\limits_{n\rightarrow \infty}\sup\limits_{k\ge1}a_{n,k}(t)\le 2\int_{t}^{T}\eta_{2}\big(\limsup\limits_{n\rightarrow \infty}\sup\limits_{k\ge1}a_{n,k}(s)\big)ds+\frac{1}{8}\limsup\limits_{n\rightarrow \infty}\sup\limits_{k\ge1}a_{n,k}(t).
\end{aligned}
\end{equation*}

\noindent Thus, thanks to \eqref{eq 4.25} we have
\begin{equation*}
\begin{aligned}
\limsup\limits_{n\rightarrow \infty}\sup\limits_{k\ge1}a_{n,k}(t)\le \frac{16}{7}\int_{t}^{T}\eta_{2}\big(\limsup\limits_{n\rightarrow \infty}\sup\limits_{k\ge1}a_{n,k}(s)\big)ds,\ t\in[T_{1},T],
\end{aligned}
\end{equation*}
which together with Bihari's inequality allows to conclude that
\begin{equation*}
\begin{aligned}
\lim\limits_{n\rightarrow \infty}\sup\limits_{k\ge1}a_{n,k}(T_{1})=\lim\limits_{n\rightarrow \infty}\sup\limits_{k\ge1}\Big(\mathbb{E}\big[\sup_{s\in[T_{1},T]}|\widehat{Y}_{s}^{n,k}|^{2}\big]+\mathbb{E}\Big[\int_{T_{1}}^{T}|\widehat{Z}_{s}^{n,k}|^{2}ds\Big]\Big)=0.
\end{aligned}
\end{equation*}

\noindent Hence, there exists a pair of processes $(Y^{\ast},Z^{\ast})\in\mathcal{S}_{\mathbb{F}}^{2}(T_{1},T;\mathbb{R}^{n})\times \mathcal{H}_{\mathbb{F}}^{2}(T_{1},T;\mathbb{R}^{n\times d})$ satisfying
\begin{equation*}\label{eq 4.26}\tag{4.26}
\begin{aligned}
\lim\limits_{n\rightarrow+\infty}\mathbb{E}\Big[\sup\limits_{s\in[T_{1},T]}|(Y_{s}^{n}-Y_{s}^{1})-Y^{\ast}_{s}|^{2}+\int_{T_{1}}^{T}|(Z_{s}^{n}-Z_{s}^{1})-Z^{\ast}_{s}|^{2}ds\Big]=0.
\end{aligned}
\end{equation*}

\noindent Recall that $(Y^{1},Z^{1})\in\mathcal{S}_{\mathbb{F}}^{p}(0,T;\mathbb{R}^{n})\times \mathcal{H}_{\mathbb{F}}^{p}(0,T;\mathbb{R}^{n\times d})$, for all $p\in(0,1)$, and $Y^{1}$ is of class (D). This allows to take the limit in BSDE \eqref{eq 4.14}, and to conclude that $(Y_{t},Z_{t})_{t\in[T_{1},T]}:=(Y^{\ast}_{t}+Y_{t}^{1},Z^{\ast}_{t}+Z_{t}^{1})_{t\in[T_{1},T]}$ constitutes an $L^{1}$ solution of mean-field BSDE \eqref{eq 1.2} on the time interval $[T_{1},T]$.

Finally, observing that $\varepsilon$ depends solely on $L,K,R_{2}$, we can identify the minimal integer $N$ satisfying $T_{N}=0$. Thus, by iteration of the preceding argument, we can construct an $L^{1}$ solution to mean-field BSDE \eqref{eq 1.2} on the interval $[T_{i+1},T_{i}]$ with terminal condition $Y_{T_{i}}$, $1\le i\le N-1$, which effectively extends the solution $(Y,Z)$ to the whole interval $[0,T]$ within a finite number of steps.

\noindent\textbf{Step 4.} We now consider the case $\alpha\in[\frac{1}{2},1)$. Let us fix $r\in(1,p^{\ast}\wedge\frac{1}{\alpha})$. Then, for all $n,k\ge1$,
\begin{equation*}\label{eq 4.27}\tag{4.27}
\begin{aligned}
(\widehat{Y}_{t}^{n,k})_{t\in[0,T]}\in \mathcal{S}_{\mathbb{F}}^{r}(0,T;\mathbb{R}^{n}).
\end{aligned}
\end{equation*}

\noindent Notice that we have assumed the identity \eqref{eq 4.12} to be valid in this situation. Therefore, following \cite[Proposition 1]{F15}, the generator $f$ of our mean-field BSDE \eqref{eq 1.2} satisfies a $p^{\ast}$-order one-sided Mao's condition in $y$, namely, there exists a nondecreasing and concave function $\varsigma:[0,+\infty)\rightarrow[0,+\infty)$ with $\varsigma(0)=0$, $\varsigma(u)>0,\ u>0$ and $\int_{0+}\frac{du}{\varsigma(u)}=+\infty$ (the linear growth constant is denoted by $R_{\varsigma}$), such that, for all $t\in[0,T],\ y,\ \bar{y}\in\mathbb{R}^{n}$, $z\in\mathbb{R}^{n\times d}$ and $\mu\in\mathcal{P}_{1}(\mathbb{R}^{n})$,
\begin{equation*}\label{eq 4.28}\tag{4.28}
\begin{aligned}
\big\langle\varrho(y-\bar{y}),f(t,y,z,\mu)-f(t,\bar{y},z,\mu)\big\rangle\le \varsigma^{\frac{1}{p^{\ast}}}(|y-\bar{y}|^{p^{\ast}}),\ d\mathbb{P}\times dt\text{-}a.e.
\end{aligned}
\end{equation*}

\noindent Additionally, owing to the fact that $r<p^{\ast}$, it follows again from \cite[Proposition 1]{F15} that inequality \eqref{eq 4.28} with $p^{\ast}$ substituted by $r$ also holds for the generator $f$. On the other hand, note that \eqref{eq 4.16}--\eqref{eq 4.17} hold true simultaneously. Therefore, by adopting the same argument as in case (i) and considering \eqref{eq 4.27}, we use Lemma \ref{le 4.5} with $p=r$ to obtain that $\widehat{Z}^{n,k}\in\mathcal{H}_{\mathbb{F}}^{r}(0,T;\mathbb{R}^{n\times d})$. Hence, $(\widehat{Y}^{n,k},\widehat{Z}^{n,k})\in \mathcal{S}_{\mathbb{F}}^{r}(0,T;\mathbb{R}^{n})\times \mathcal{H}_{\mathbb{F}}^{r}(0,T;\mathbb{R}^{n\times d})$ is a solution of BSDE \eqref{eq 4.16}, for all $n,k\ge1$.

Moreover, by combining the preceding arguments with \textbf{(H2)}--\textbf{(H3)}, it is straightforward to verify that, $d\mathbb{P}\times dt\text{-}a.e.$, for all $t\in[0,T]\ \text{and}\ y\in\mathbb{R}^{n}$,
\begin{equation*}
\begin{aligned}
\big\langle\varrho(y),\widehat{f}^{n,k}(t,y)\big\rangle\le \varsigma^{\frac{1}{r}}(|y|^{r})+L|\widehat{Z}_{t}^{n-1,k}|+K\mathbb{E}[|\widehat{Y}_{t}^{n-1,k}|].
\end{aligned}
\end{equation*}

\noindent Then, it follows from Lemma \ref{le 4.8} with $\varpi(\cdot)=\varsigma(\cdot),\ \lambda=0,\ g_{t}=L|\widehat{Z}_{t}^{n-1,k}|+K\mathbb{E}[|\widehat{Y}_{t}^{n-1,k}|]$, and H\"{o}lder's inequality that there exists a constant $C_{r}>0$ depending only on $r$ such that, for all $n\ge2,\ k\ge1$ and $t\in[0,T]$,
\begin{equation*}\label{eq 4.29}\tag{4.29}
\begin{aligned}
&\mathbb{E}\big[\sup\limits_{s\in[t,T]}|\widehat{Y}_{s}^{n,k}|^{r}\big]+\mathbb{E}\Big[\Big(\int_{t}^{T}|\widehat{Z}_{s}^{n,k}|^{2}ds\Big)^{\frac{r}{2}}\Big]\le e^{C_{r}(T-t)}\bigg(\int_{t}^{T}\varsigma \Big(\mathbb{E}\big[\sup\limits_{u\in[s,T]}|\widehat{Y}_{u}^{n,k}|^{r}\big]\Big)ds\\
&\hspace{6em}+2^{r}L^{r}(T-t)^{\frac{r}{2}}\mathbb{E}\Big[\Big(\int_{t}^{T}|\widehat{Z}_{s}^{n-1,k}|^{2}ds\Big)^{\frac{r}{2}}\Big]+2^{r}K^{r}(T-t)^{r}\mathbb{E}\big[\sup\limits_{s\in[t,T]}|\widehat{Y}_{s}^{n-1,k}|^{r}\big]\bigg).
\end{aligned}
\end{equation*}

\noindent Then, using the notations
\begin{equation*}
\begin{aligned}
\delta:=\min\Big\{\frac{\ln 2}{C_{r}},\Big(\frac{1}{16\cdot(2L)^{r}}\Big)^{\frac{2}{r}},\Big(\frac{1}{16\cdot(2K)^{r}}\Big)^{\frac{1}{r}},\frac{\ln 2}{2R_{\varsigma}}\Big\}\ \ \text{and}\ \ \widetilde{T}_{j}:=(T-j\delta)\vee 0,\ j\ge1,
\end{aligned}
\end{equation*}

\noindent we have, for all $t\in[\widetilde{T}_{1},T]$,
\begin{equation*}\label{eq 4.30}\tag{4.30}
\begin{aligned}
e^{C_{r}(T-t)}\le2,\ \ 2^{r}e^{C_{r}(T-t)}L^{r}(T-t)^{\frac{r}{2}}\le\frac{1}{8},\ \ 2^{r}e^{C_{r}(T-t)}K^{r}(T-t)^{r}\le\frac{1}{8},\ \ e^{2R_{\varsigma}(T-t)}\le2.
\end{aligned}
\end{equation*}

\noindent Substituting \eqref{eq 4.30} in \eqref{eq 4.29}, we obtain that, for all $n\ge2,\ k\ge1$ and $t\in[\widetilde{T}_{1},T]$,
\begin{equation*}\label{eq 4.31}\tag{4.31}
\begin{aligned}
&\mathbb{E}\big[\sup\limits_{s\in[t,T]}|\widehat{Y}_{s}^{n,k}|^{r}\big]+\mathbb{E}\Big[\Big(\int_{t}^{T}|\widehat{Z}_{s}^{n,k}|^{2}ds\Big)^{\frac{r}{2}}\Big]\le2\int_{t}^{T}\varsigma \Big(\mathbb{E}\big[\sup\limits_{u\in[s,T]}|\widehat{Y}_{u}^{n,k}|^{r}\big]\Big)ds\\
&\hspace{6em}+\frac{1}{8}\mathbb{E}\Big[\Big(\int_{t}^{T}|\widehat{Z}_{s}^{n-1,k}|^{2}ds\Big)^{\frac{r}{2}}\Big]+\frac{1}{8}\mathbb{E}\big[\sup\limits_{s\in[t,T]}|\widehat{Y}_{s}^{n-1,k}|^{r}\big].
\end{aligned}
\end{equation*}

\noindent Hence, by taking into account the linear growth of the function $\varsigma(\cdot)$, and applying Gronwall's inequality, we get, for all $n\ge2,\ k\ge1$ and $t\in[\widetilde{T}_{1},T]$,
\begin{equation*}\label{eq 4.32}\tag{4.32}
\begin{aligned}
&\hspace{2em}\mathbb{E}\big[\sup\limits_{s\in[t,T]}|\widehat{Y}_{s}^{n,k}|^{r}\big]+\mathbb{E}\Big[\Big(\int_{t}^{T}|\widehat{Z}_{s}^{n,k}|^{2}ds\Big)^{\frac{r}{2}}\Big]\\
%&\le e^{2R_{\varsigma}(T-t)}\Big(2R_{\varsigma}T+\frac{1}{8}\mathbb{E}\Big[\Big(\int_{t}^{T}|\widehat{Z}_{s}^{n-1,k}|^{2}ds\Big)^{\frac{r}{2}}\Big]+\frac{1}{8}\mathbb{E}\big[\sup\limits_{s\in[t,T]}|\widehat{Y}_{s}^{n-1,k}|^{r}\big]\Big)\\
&\le 4R_{\varsigma}T+\frac{1}{4}\mathbb{E}\Big[\Big(\int_{t}^{T}|\widehat{Z}_{s}^{n-1,k}|^{2}ds\Big)^{\frac{r}{2}}\Big]+\frac{1}{4}\mathbb{E}\big[\sup\limits_{s\in[t,T]}|\widehat{Y}_{s}^{n-1,k}|^{r}\big].
\end{aligned}
\end{equation*}

\noindent Note that for all $a,b\in L^{2}([t,T])$,
\begin{equation*}\label{eq 4.33}\tag{4.33}
\begin{aligned}
\Big(\int_{t}^{T}|a_{s}+b_{s}|^{2}ds\Big)^{\frac{r}{2}}\le 2\Big(\int_{t}^{T}|a_{s}|^{2}ds\Big)^{\frac{r}{2}}+2\Big(\int_{t}^{T}|b_{s}|^{2}ds\Big)^{\frac{r}{2}}.
\end{aligned}
\end{equation*}

\noindent Hence, from \eqref{eq 4.32} and the argument developed for case (i), we get, for all $t\in[\widetilde{T}_{1},T]$,
\begin{equation*}
\begin{aligned}
&\hspace{2em}\sup\limits_{n\ge2}\sup\limits_{k\ge1}\Big(\mathbb{E}\big[\sup\limits_{s\in[t,T]}|\widehat{Y}_{s}^{n,k}|^{r}\big]+\mathbb{E}\Big[\Big(\int_{t}^{T}|\widehat{Z}_{s}^{n,k}|^{2}ds\Big)^{\frac{r}{2}}\Big]\Big)\\
&\le 12R_{\varsigma}T+\mathbb{E}\Big[\Big(\int_{0}^{T}|Z_{s}^{2}-Z_{s}^{1}|^{2}ds\Big)^{\frac{r}{2}}\Big]+\mathbb{E}\big[\sup\limits_{s\in[0,T]}|Y_{s}^{2}-Y_{s}^{1}|^{r}\big]<+\infty.
\end{aligned}
\end{equation*}
\noindent This latter estimate allows to deduce from \eqref{eq 4.31} with the argument used in case (i) that there exists $(Y^{\ast},Z^{\ast})\in\mathcal{S}_{\mathbb{F}}^{r}(\widetilde{T}_{1},T;\mathbb{R}^{n})\times \mathcal{H}_{\mathbb{F}}^{r}(\widetilde{T}_{1},T;\mathbb{R}^{n\times d})$ such that
\begin{equation*}\label{eq 4.34}\tag{4.34}
\begin{aligned}
\lim\limits_{n\rightarrow+\infty}\mathbb{E}\Big[\sup\limits_{s\in[\widetilde{T}_{1},T]}|(Y_{s}^{n}-Y_{s}^{1})-Y^{\ast}_{s}|^{r}+\Big(\int_{\widetilde{T}_{1}}^{T}|(Z_{s}^{n}-Z_{s}^{1})-Z^{\ast}_{s}|^{2}ds\Big)^{\frac{r}{2}}\Big]=0.
\end{aligned}
\end{equation*}

\noindent Recall that $(Y^{1},Z^{1})\in\mathcal{S}_{\mathbb{F}}^{p}(0,T;\mathbb{R}^{n})\times \mathcal{H}_{\mathbb{F}}^{p}(0,T;\mathbb{R}^{n\times d})$, for all $p\in(0,1)$, and $Y^{1}$ is of the class (D). Hence, by passing to the limit in BSDE \eqref{eq 4.14}, we show that $(Y_{t},Z_{t})_{t\in[\widetilde{T}_{1},T]}:=(Y^{\ast}_{t}+Y_{t}^{1},Z^{\ast}_{t}+Z_{t}^{1})_{t\in[\widetilde{T}_{1},T]}$ is an $L^{1}$ solution of the mean-field BSDE \eqref{eq 1.2} on the time interval $[\widetilde{T}_{1},T]$.

As in case (i), a finite iteration extends the solution $(Y,Z)$ to the whole interval $[0,T]$. The proof is complete now.
\end{proof}

  Finally, we come back to the existence and the uniqueness results, but now for more general mean-field BSDEs with integrable parameters. The generator now depends on the solution $(Y,Z)$ but also on its joint distribution. To be more precise, the equation takes the following form:
\begin{equation*}\label{eq 4.35}\tag{4.35}
	Y_{t}=\xi+\int_{t}^{T}f(s,Y_{s},Z_{s},\mathbb{P}_{(Y_{s},Z_{s})})ds-\int_{t}^{T}Z_{s}dW_{s},\ t\in[0,T],
\end{equation*}
where the terminal value $\xi:\Omega\rightarrow\mathbb{R}^{n}$ is an integrable $\mathcal{F}_{T}$-measurable random variable, and the
generator $f:=f(\omega,t,y,z,\mu):\Omega\times[0,T]\times\mathbb{R}^{n}\times\mathbb{R}^{n\times d}\times\mathcal{P}_{1}(\mathbb{R}^{n+n\times d})\rightarrow\mathbb{R}^{n}$ is $\mathbb{F}$-progressively
 measurable, for all fixed $(y,z,\mu)\in\mathbb{R}^{n}\times\mathbb{R}^{n\times d}\times\mathcal{P}_{1}(\mathbb{R}^{n+n\times d})$. Moreover, the following assumptions are supposed to be satisfied:

\noindent\textbf{(B1)}\ There exists $A>0$, such that, for all $t\in[0,T],\ y,\ \bar{y}\in\mathbb{R}^{n}$, $z\in\mathbb{R}^{n\times d}$ and $\mu\in \mathcal{P}_{1}(\mathbb{R}^{n+n\times d}),$
\begin{equation*}
\begin{aligned}
\big\langle y-\bar{y},f(t,y,z,\mu)-f(t,\bar{y},z,\mu)\big\rangle\le A|y-\bar{y}|^{2},\ d\mathbb{P}\times dt\text{-}a.e.
\end{aligned}
\end{equation*}

\noindent\textbf{(B2)}\ There exists $K>0$, such that, for all $t\in[0,T],\ y\in\mathbb{R}^{n}$, $z\in\mathbb{R}^{n\times d}$ and $\mu,\ \bar{\mu}\in \mathcal{P}_{1}(\mathbb{R}^{n+n\times d}),$
\begin{equation*}
\begin{aligned}
|f(t,y,z,\mu)-f(t,y,z,\bar{\mu})|\le KW_{1}(\mu,\bar{\mu}),\ d\mathbb{P}\times dt\text{-}a.e.
\end{aligned}
\end{equation*}

\noindent\textbf{(B3)}\ There exist constants $L>0,\ M>0,\ \alpha\in(0,1)$ and a nonnegative process $(\vartheta_{t})_{t\in[0,T]}\in \mathcal{L}_{\mathbb{F}}^{1}(0,T;\mathbb{R})$, such that, $d\mathbb{P}\times dt\text{-}a.e.$, for all $t\in[0,T],\ y\in\mathbb{R}^{n}$, $z,\ \bar{z}\in\mathbb{R}^{n\times d}$ and $\mu\in \mathcal{P}_{1}(\mathbb{R}^{n+n\times d}),$
\begin{equation*}
\begin{aligned}
|f(t,y,z,\mu)-f(t,y,\bar{z},\mu)|\le L|z-\bar{z}|,\\
\end{aligned}
\end{equation*}
\begin{equation*}
\begin{aligned}
|f(t,y,z,\mu)-f(t,y,0,\mu)|\le M(\vartheta_{t}+|y|+W_{1}(\mu,\delta_{0})+|z|)^{\alpha}.
\end{aligned}
\end{equation*}

\noindent\textbf{(B4)}\ For all $r\ge0$, it holds that $\mathbb{E}\big[\int_{0}^{T}\bar{\phi}_{r}(t)dt\big]<+\infty\ \text{with}\ \bar{\phi}_{r}(t):=\sup_{|y|\le r}|f(t,y,0,\delta_{0})|.$ Moreover, $d\mathbb{P}\times dt\text{-}a.e.$, for all $z\in\mathbb{R}^{n\times d}$ and $\mu\in \mathcal{P}_{1}(\mathbb{R}^{n+n\times d})$, $y\rightarrow f(\omega,t,y,z,\mu)$ is continuous.

\noindent\textbf{(B5)}
\begin{equation*}
\mathbb{E}\bigg[\sup\limits_{t\in[0,T]}\bigg(\mathbb{E}\Big[|\xi|+\int_{0}^{T}|f(s,0,0,\delta_{0})|ds\Big|\mathcal{F}_{t}\Big]\bigg)\bigg]<+\infty.
\end{equation*}

\begin{remark}
Observe that, if we set $\eta(u)\equiv Au,\ u\ge0$ in assumption \textbf{(H1)}, then assumption \textbf{(H1)} reduces to the monotonicity condition of assumption \textbf{(B1)}. Therefore, assumption \textbf{(B1)} is stronger than assumption \textbf{(H1)}.
\end{remark}

\begin{example}\label{ex new}
For all $(\omega,t,y,z,\mu)\in \Omega\times[0,T]\times\mathbb{R}^{n}\times \mathbb{R}^{n\times d}\times\mathcal{P}_{1}(\mathbb{R}^{n+n\times d})$, we consider the generator:
\begin{equation*}
\begin{aligned}
\widehat{F}^{i}(\omega,t,y,z,\mu)&:=|W_{t}(\omega)|+2y_{i}-2y_{i}^{5}+\frac{|z|}{1+|z|}+(|z|^{3}\wedge|z|^{\frac{3}{4}})+3W_{1}(\mu,\delta_{0}),
\end{aligned}
\end{equation*}
with terminal value $\xi_{i}:=2|W_{T}|,\ 1\le i\le n$. It can be verified that $\widehat{F}$ satisfies assumptions \textbf{(B1)}--\textbf{(B5)} with $A=2,\ K=3\sqrt{n},\ L=4\sqrt{n},\ M=2\sqrt{n},\ \vartheta_{t}\equiv1,\ \alpha=\frac{3}{4},\ \bar{\phi}_{r}(t)=\sup_{|y|\le r}\big[\sum_{i=1}^{n}\{|W_{t}|+2y_{i}-2y_{i}^{5}\}^{2}\big]^{\frac{1}{2}},\ \widehat{F}^{i}(t,0,0,\delta_{0})=|W_{t}|,\ 1\le i\le n$.
\end{example}

\begin{theorem}\label{th 4.11}
Let assumptions \textbf{(B1)}--\textbf{(B5)} hold true. \noindent Then the mean-field BSDE \eqref{eq 4.35} admits a unique solution $(Y,Z)\in \mathcal{S}_{\mathbb{F}}^{1}(0,T;\mathbb{R}^{n})\times \mathcal{H}_{\mathbb{F}}^{1}(0,T;\mathbb{R}^{n\times d})$, which is, in particular, an $L^{1}$ solution.
\end{theorem}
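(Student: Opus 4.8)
The plan is to adapt the Picard iteration and the interval-partitioning scheme from the proof of Theorem~\ref{th 4.10} to the present joint-law setting, the new feature being that the frozen measure now carries the $Z$-component as well. As usual I would put $(Y^{0},Z^{0})=(0,0)$ and define $(Y^{n},Z^{n})$ recursively as the $L^{1}$ solution of
$$Y_{t}^{n}=\xi+\int_{t}^{T}f(s,Y_{s}^{n},Z_{s}^{n-1},\mathbb{P}_{(Y_{s}^{n-1},Z_{s}^{n-1})})ds-\int_{t}^{T}Z_{s}^{n}dW_{s},\quad t\in[0,T].$$
Once both the $z$-slot and the law are frozen at step $n-1$, the driver $g(s,y):=f(s,y,Z_{s}^{n-1},\mathbb{P}_{(Y_{s}^{n-1},Z_{s}^{n-1})})$ is $z$-independent and, by \textbf{(B1)}, monotone in $y$ (this is \textbf{(H1)} with $\eta(u)\equiv Au$); using \textbf{(B2)}--\textbf{(B4)} together with the bound $W_{1}(\mathbb{P}_{(Y^{n-1}_{s},Z^{n-1}_{s})},\delta_{0})\le\mathbb{E}[|Y^{n-1}_{s}|+|Z^{n-1}_{s}|]$ one verifies the integrability of $g$, so that \cite[Theorems 1--2]{F18} furnishes a unique $L^{1}$ solution $(Y^{n},Z^{n})$. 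The role of the reinforced assumption \textbf{(B5)} is to upgrade the conclusion from $\mathcal{H}^{p}$ $(p<1)$ to $\mathcal{H}^{1}$, thereby keeping the whole iteration inside $\mathcal{S}_{\mathbb{F}}^{1}\times\mathcal{H}_{\mathbb{F}}^{1}$.

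Next I would estimate the increments $\widehat{Y}^{n,k}:=Y^{n+k}-Y^{n}$ and $\widehat{Z}^{n,k}:=Z^{n+k}-Z^{n}$, which solve a BSDE with zero terminal value and generator $\widehat{f}^{n,k}$. Decomposing $\widehat{f}^{n,k}$ into its $y$-, $z$- and $\mu$-increments and applying \textbf{(B1)}--\textbf{(B3)} with the coupling estimate $W_{1}(\mathbb{P}_{(Y^{n+k-1}_{s},Z^{n+k-1}_{s})},\mathbb{P}_{(Y^{n-1}_{s},Z^{n-1}_{s})})\le\mathbb{E}[|\widehat{Y}^{n-1,k}_{s}|+|\widehat{Z}^{n-1,k}_{s}|]$, I obtain
$$\big\langle y,\widehat{f}^{n,k}(s,y)\big\rangle\le A|y|^{2}+L|y|\,|\widehat{Z}^{n-1,k}_{s}|+K|y|\,\mathbb{E}\big[|\widehat{Y}^{n-1,k}_{s}|+|\widehat{Z}^{n-1,k}_{s}|\big],$$
which is the analogue of \eqref{eq 4.17}--\eqref{eq 4.18}, but carries the extra source term $K|y|\,\mathbb{E}[|\widehat{Z}^{n-1,k}_{s}|]$ created by the dependence of the law on $Z$. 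Since here the monotonicity function is linear, Bihari's inequality is replaced by plain Gronwall, and the a priori estimate of Lemma~\ref{le 4.5} (controlling $\widehat{Z}^{n,k}$ in $\mathcal{H}$) combined with a moment bound for $\widehat{Y}^{n,k}$ yields, on a short interval $[T-\delta,T]$ with $\delta$ depending only on $A,K,L$,
$$\mathbb{E}\big[\sup_{s\in[t,T]}|\widehat{Y}_{s}^{n,k}|^{2}\big]+\mathbb{E}\Big[\int_{t}^{T}|\widehat{Z}_{s}^{n,k}|^{2}ds\Big]\le \tfrac14\,a_{n-1,k}(t)+C,$$
where $a_{n-1,k}(t):=\mathbb{E}[\sup_{s\in[t,T]}|\widehat{Y}_{s}^{n-1,k}|^{2}]+\mathbb{E}[\int_{t}^{T}|\widehat{Z}_{s}^{n-1,k}|^{2}ds]$.

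This recursion then shows, exactly as in Step~3 of the proof of Theorem~\ref{th 4.10}, that $(Y^{n},Z^{n})$ is a Cauchy sequence in $\mathcal{S}_{\mathbb{F}}^{2}(T-\delta,T)\times\mathcal{H}_{\mathbb{F}}^{2}(T-\delta,T)$ (or, when $\alpha\in[\tfrac12,1)$, in the corresponding $\mathcal{S}^{r}\times\mathcal{H}^{r}$ scale); its limit solves the mean-field BSDE \eqref{eq 4.35} on $[T-\delta,T]$, and a finite backward iteration over the grid $T_{j}:=(T-j\delta)\vee 0$ extends the solution to all of $[0,T]$. Uniqueness is obtained by running the very same difference estimate on two genuine solutions, so that the inequality becomes self-referential and Gronwall forces $\widehat{Y}=\widehat{Z}=0$.

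The main obstacle is precisely the term $K\mathbb{E}[|\widehat{Z}^{n-1,k}_{s}|]$: unlike in Theorem~\ref{th 4.10}, where the law involved only $Y$ and the $Z$-component was a mere by-product of the scheme, here the $Z$-increment feeds back into the source of the increment BSDE, so the contraction cannot be closed in $\widehat{Y}$ alone but must be run simultaneously for $\widehat{Y}$ and $\widehat{Z}$ in the $\mathcal{H}^{1}$-topology. Guaranteeing this $\mathcal{H}^{1}$-control of the $Z$-component---rather than merely the $\mathcal{H}^{p}$-control for $p<1$ available in the Osgood setting---is exactly where the strengthened integrability of \textbf{(B5)} is indispensable, and ensuring that the combined contraction constant stays strictly below $1$ on the short interval is the delicate quantitative point of the argument.
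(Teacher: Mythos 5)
Your proposal shares the paper's overall architecture --- Picard iteration, simultaneous contraction of $(\widehat{Y}^{n,k},\widehat{Z}^{n,k})$ on short intervals with backward extension, and the correct diagnosis that \textbf{(B5)} serves to keep the iterates in $\mathcal{S}_{\mathbb{F}}^{1}\times\mathcal{H}_{\mathbb{F}}^{1}$ so that $\mathbb{E}[|Z^{n-1}_{s}|]<+\infty$ and the joint law stays in $\mathcal{P}_{1}$ --- but it deviates at two substantive points, and the first one opens a genuine gap as written. Your scheme freezes the $z$-slot of the generator at step $n-1$, whereas the paper's iteration \eqref{eq 4.58} keeps $Z^{n}$ in the generator and freezes only the law $\mathbb{P}_{(Y^{n-1},Z^{n-1})}$; each step is then a classical BSDE with Lipschitz $z$-dependence, solved via \cite[Theorem 3]{F18} --- not Theorems 1--2, which you cite. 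This matters: Theorems 1--2 of \cite{F18} yield $Z$ only in $\mathcal{H}^{p}_{\mathbb{F}}$ for $p<1$, and \textbf{(B5)} does not by itself ``upgrade'' their conclusion to $\mathcal{H}^{1}$ --- that upgrade is precisely the content of Theorem 3, whose integrability hypothesis must be verified for your frozen driver. In the paper's scheme this verification is immediate, because the frozen quantity $s\mapsto W_{1}(\mathbb{P}_{(U_{s},V_{s})},\delta_{0})$ is deterministic. In your scheme the random variable $|Z^{n-1}_{s}(\omega)|$ sits inside the driver: the Lipschitz bound in \textbf{(B3)} controls it only by an $L^{1}$ random variable, and Doob's maximal inequality fails at $p=1$, so the \textbf{(B5)}-type condition for the frozen equation cannot be checked that way; one must instead route through the sublinear bound $M(\cdot)^{\alpha}$ of \textbf{(B3)}, which places $\int_{0}^{T}(\vartheta_{s}+\cdots+|Z^{n-1}_{s}|)^{\alpha}ds$ in $L^{1/\alpha}$ with $1/\alpha>1$, where Doob applies. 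Your scheme is thus salvageable, but the solvability of each Picard step in $\mathcal{S}^{1}\times\mathcal{H}^{1}$ is exactly the point you wave away.

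The second deviation concerns uniqueness. The paper does not run your self-referential short-interval Gronwall: it establishes the two-sided weighted estimates \eqref{eq 4.38} and \eqref{eq 4.44} with weight $e^{\beta t}$, choosing $\beta$ large as in \eqref{eq 4.45}, whose combination forces $\widehat{Y}=\widehat{Z}=0$ globally in one stroke. Your alternative (apply Lemma~\ref{le new} with $g_{t}=K\mathbb{E}[|\widehat{Y}_{t}|+|\widehat{Z}_{t}|]$ and zero terminal value on a short interval, then absorb) is viable, but only after one first proves $\widehat{Y}\in\mathcal{S}_{\mathbb{F}}^{p_{1}}$ and $\widehat{Z}\in\mathcal{H}_{\mathbb{F}}^{p_{1}}$ for some $p_{1}>1$ --- the paper's Steps 1--2, which exploit the class-(D)-type argument and the $\alpha$-sublinear structure of \textbf{(B3)}; without this finiteness the self-referential inequality cannot be closed, since the quantities being absorbed may be infinite for solutions known only in $\mathcal{S}^{1}\times\mathcal{H}^{1}$. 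Your sketch omits this upgrade entirely for the uniqueness half. Two minor points: the case split $\alpha\in[\frac{1}{2},1)$ you import from Theorem~\ref{th 4.10} is unnecessary here, since \textbf{(B1)} is linear monotonicity and the paper simply fixes one $1<r_{0}\le2$ with $\alpha r_{0}\le1$ (no $p^{\ast}$-condition, no Bihari); and the contraction is run in the $\mathcal{S}^{r_{0}}\times\mathcal{H}^{r_{0}}$ scale with $r_{0}>1$, not in the $\mathcal{H}^{1}$-topology as you state, even though the iterates themselves live in $\mathcal{S}^{1}\times\mathcal{H}^{1}$.
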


\begin{proof}
\noindent We begin with the proof of \underline{the uniqueness}, which we divide into 3 steps.

\noindent\textbf{Step 1.} Let $(Y,Z),\ (\widetilde{Y},\widetilde{Z})\in \mathcal{S}_{\mathbb{F}}^{1}(0,T;\mathbb{R}^{n})\times \mathcal{H}_{\mathbb{F}}^{1}(0,T;\mathbb{R}^{n\times d})$ be two solutions to mean-field BSDE \eqref{eq 4.35}. For the sake of simplicity, we put $\widehat{Y}:=Y-\widetilde{Y}$ and $\widehat{Z}:=Z-\widetilde{Z}.$ Next, we show that $\widehat{Y}\in\mathcal{S}_{\mathbb{F}}^{p}(0,T;\mathbb{R}^{n})$, for all $1<p\le\frac{1}{\alpha}$, where $\alpha$ is introduced in \textbf{(B3)}. To this end, we introduce the following sequence of stopping times: For all $n\ge1$,
\begin{equation*}
\begin{aligned}
\tau_{n}:=\inf\Big\{t\in[0,T]:\int_{0}^{t}\big(|Z_{s}|^{2}+|\widetilde{Z}_{s}|^{2}\big)ds\ge n\Big\}\wedge T.
\end{aligned}
\end{equation*}

\noindent Then, thanks to the It\^{o}-Tanaka formula (see also Briand et al. \cite[Corollary 2.3]{BD03}), we have, for all $t\in[0,T]$,
\begin{equation*}\label{eq 4.36}\tag{4.36}
\begin{aligned}
|\widehat{Y}_{t\wedge \tau_{n}}|\le|\widehat{Y}_{\tau_{n}}|+\int_{t\wedge \tau_{n}}^{\tau_{n}}\big\langle\varrho (\widehat{Y}_{s}),f(s,Y_{s},Z_{s},\mathbb{P}_{(Y_{s},Z_{s})})-f(s,\widetilde{Y}_{s},\widetilde{Z}_{s},\mathbb{P}_{(\widetilde{Y}_{s},\widetilde{Z}_{s})})\big\rangle ds
-\int_{t\wedge \tau_{n}}^{\tau_{n}}\langle\varrho (\widehat{Y}_{s}),\widehat{Z}_{s} dW_{s}\rangle,
\end{aligned}
\end{equation*}
where $\varrho(x):=|x|^{-1}x,\ \text{if}\ x\neq0;\ \varrho(x):=0,\ \text{if}\ x=0$. Moreover, the assumptions \textbf{(B1)}--\textbf{(B3)} give
\begin{equation*}\label{eq 4.37}\tag{4.37}
\begin{aligned}
&\hspace{2em}\big\langle\varrho (\widehat{Y}_{s}),f(s,Y_{s},Z_{s},\mathbb{P}_{(Y_{s},Z_{s})})-f(s,\widetilde{Y}_{s},\widetilde{Z}_{s},\mathbb{P}_{(\widetilde{Y}_{s},\widetilde{Z}_{s})})\big\rangle\\
&\le A|\widehat{Y}_{s}|+K\mathbb{E}[|\widehat{Y}_{s}|]+K\mathbb{E}[|\widehat{Z}_{s}|]+2M(\vartheta_{s}+|\widetilde{Y}_{s}|+\mathbb{E}[|\widetilde{Y}_{s}|]+\mathbb{E}[|\widetilde{Z}_{s}|]+|Z_{s}|+|\widetilde{Z}_{s}|)^{\alpha}.
\end{aligned}
\end{equation*}

\noindent By substituting \eqref{eq 4.37} in \eqref{eq 4.36} and then taking the conditional expectation, we obtain that, for all $n\ge1$ and $t\in[0,T]$,
\begin{equation*}
\begin{aligned}
|\widehat{Y}_{t\wedge \tau_{n}}|\le\mathbb{E}_{t}\Big[|\widehat{Y}_{\tau_{n}}|+\int_{t\wedge \tau_{n}}^{\tau_{n}}\big(A|\widehat{Y}_{s}|+K\mathbb{E}[|\widehat{Y}_{s}|]+K\mathbb{E}[|\widehat{Z}_{s}|]\big)ds\Big]+\widehat{Q}(t),
\end{aligned}
\end{equation*}
where $\widehat{Q}(t):=2M\mathbb{E}_{t}\big[\int_{0}^{T}\big(\vartheta_{s}+|\widetilde{Y}_{s}|+\mathbb{E}[|\widetilde{Y}_{s}|]+\mathbb{E}[|\widetilde{Z}_{s}|]+|Z_{s}|+|\widetilde{Z}_{s}|\big)^{\alpha}ds\big].$ Then, since $(Y,Z),\ (\widetilde{Y},\widetilde{Z})\in \mathcal{S}_{\mathbb{F}}^{1}(0,T;\mathbb{R}^{n})\times \mathcal{H}_{\mathbb{F}}^{1}(0,T;\mathbb{R}^{n\times d})$, an argument similar to the proof of Step 1 in Theorem \ref{th 4.9} yields $\widehat{Y}\in\mathcal{S}_{\mathbb{F}}^{p}(0,T;\mathbb{R}^{n})$, for all $1<p\le\frac{1}{\alpha}$. In particular, we fix some $p_{1}$ such that $1<p_{1}<2\wedge\frac{1}{\alpha}$, and thus $\widehat{Y}\in\mathcal{S}_{\mathbb{F}}^{p_{1}}(0,T;\mathbb{R}^{n})$.

\noindent \textbf{Step 2.} We now show that $\widehat{Z}\in\mathcal{H}_{\mathbb{F}}^{p_{1}}(0,T;\mathbb{R}^{n\times d})$ and there exists a constant $\widehat{C}_{p_{1},A,L,K,T}>0$ depending only on $p_{1},A,L,K,T$ such that for all $\beta>0$,
\begin{equation*}\label{eq 4.38}\tag{4.38}
\begin{aligned}
\mathbb{E}\Big[\Big(\int_{0}^{T}e^{2\beta s}|\widehat{Z}_{s}|^{2}ds\Big)^{\frac{p_{1}}{2}}\Big]\le \widehat{C}_{p_{1},A,L,K,T}\mathbb{E}\big[\sup\limits_{t\in[0,T]}e^{\beta p_{1}t}|\widehat{Y}_{t}|^{p_{1}}\big].
\end{aligned}
\end{equation*}

\noindent Actually, as $\widehat{Y}\in\mathcal{S}_{\mathbb{F}}^{p_{1}}(0,T;\mathbb{R}^{n})$, by using an argument analogous to Step 2 in the proof of Theorem \ref{th 4.9} and applying Lemma \ref{le 4.5} with $\lambda_{1}=A,\ \lambda_{2}=L,\ g_{t}=KW_{1}(\mathbb{P}_{(Y_{t},Z_{t})},\mathbb{P}_{(\widetilde{Y}_{t},\widetilde{Z}_{t})}),\ \theta_{t}\equiv 0$, we deduce that $\widehat{Z}\in\mathcal{H}_{\mathbb{F}}^{p_{1}}(0,T;\mathbb{R}^{n\times d})$. Next, we only need to verify \eqref{eq 4.38}. First, applying It\^{o}'s formula to $e^{2\beta t}|\widehat{Y}_{t}|^{2}$ on $[0,T]$ yields
\begin{equation*}\label{eq 4.39}\tag{4.39}
\begin{aligned}
|\widehat{Y}_{0}|^{2}+2\beta\int_{0}^{T}e^{2\beta s}|\widehat{Y}_{s}|^{2}ds+\int_{0}^{T}e^{2\beta s}|\widehat{Z}_{s}|^{2}ds=2\int_{0}^{T}e^{2\beta s}\langle \widehat{Y}_{s},\Delta \widehat{f}_{s}\rangle ds-2\int_{0}^{T}e^{2\beta s}\langle \widehat{Y}_{s},\widehat{Z}_{s}dW_{s}\rangle,
\end{aligned}
\end{equation*}
\noindent where $\Delta \widehat{f}_{s}:=f(s,Y_{s},Z_{s},\mathbb{P}_{(Y_{s},Z_{s})})-f(s,\widetilde{Y}_{s},\widetilde{Z}_{s},\mathbb{P}_{(\widetilde{Y}_{s},\widetilde{Z}_{s})}),\ s\in[0,T]$. In view of assumptions \textbf{(B1)}--\textbf{(B3)} combined with Young's inequality we get
\begin{equation*}\label{eq 4.40}\tag{4.40}
2\langle \widehat{Y}_{s},\Delta \widehat{f}_{s}\rangle \le (2A+2^{\frac{2+p_{1}}{p_{1}}}K^{2}+2L^{2})|\widehat{Y}_{s}|^{2}+2K|\widehat{Y}_{s}|\mathbb{E}[|\widehat{Y}_{s}|]+2^{-\frac{2+p_{1}}{p_{1}}}\mathbb{E}[|\widehat{Z}_{s}|]^{2}+\frac{1}{2}|\widehat{Z}_{s}|^{2}.
\end{equation*}

\noindent Hence, by setting $\widehat{\mathcal{Y}}_{T}^{\ast}:=\sup\limits_{s\in[0,T]}e^{\beta s}|\widehat{Y}_{s}|$ and substituting \eqref{eq 4.40} into \eqref{eq 4.39}, this implies
\begin{equation*}\label{eq 4.41}\tag{4.41}
\begin{aligned}
\int_{0}^{T}e^{2\beta s}|\widehat{Z}_{s}|^{2}ds&\le(4AT+2^{\frac{2+2p_{1}}{p_{1}}}K^{2}T+4L^{2}T)|\widehat{\mathcal{Y}}_{T}^{\ast}|^{2}+4KT\widehat{\mathcal{Y}}_{T}^{\ast}\mathbb{E}[\widehat{\mathcal{Y}}_{T}^{\ast}]\\
&\hspace{2em}+2^{-\frac{2}{p_{1}}}\int_{0}^{T}e^{2\beta s}\mathbb{E}[|\widehat{Z}_{s}|]^{2}ds+4\sup\limits_{t\in[0,T]}\Big|\int_{0}^{t}e^{2\beta s}\langle\widehat{Y}_{s},\widehat{Z}_{s}dW_{s}\rangle\Big|.
\end{aligned}
\end{equation*}

\noindent From Jensen's inequality we have
\begin{equation*}\label{eq 4.42}\tag{4.42}
\begin{aligned}
\Big(2^{-\frac{2}{p_{1}}}\int_{0}^{T}e^{2\beta s}\mathbb{E}[|\widehat{Z}_{s}|]^{2}ds\Big)^{\frac{p_{1}}{2}}\le \frac{1}{2}\mathbb{E}\Big[\Big(\int_{0}^{T}e^{2\beta s}|\widehat{Z}_{s}|^{2}ds\Big)^{\frac{p_{1}}{2}}\Big].
\end{aligned}
\end{equation*}

\noindent Thus, by raising both sides of \eqref{eq 4.41} to the power $\frac{p_{1}}{2}$ and then taking the expectation, combined with \eqref{eq 4.42} and H\"{o}lder's inequality, we see that there exists a constant $C_{p_{1},A,L,K,T}>0$ depending only on $p_{1},A,L,K,T$ such that
\begin{equation*}\label{eq 4.43}\tag{4.43}
\begin{aligned}
\frac{1}{2}\mathbb{E}\Big[\Big(\int_{0}^{T}e^{2\beta s}|\widehat{Z}_{s}|^{2}ds\Big)^{\frac{p_{1}}{2}}\Big]\le C_{p_{1},A,L,K,T}\bigg(\mathbb{E}[|\widehat{\mathcal{Y}}_{T}^{\ast}|^{p_{1}}]+\mathbb{E}\Big[\sup\limits_{t\in[0,T]}\Big|\int_{0}^{t}e^{2\beta s}\langle \widehat{Y}_{s},\widehat{Z}_{s}dW_{s}\rangle\Big|^{\frac{p_{1}}{2}}\Big]\bigg).
\end{aligned}
\end{equation*}

\noindent Moreover, due to the Burkholder-Davis-Gundy and the Young inequalities, we obtain
\begin{equation*}
\begin{aligned}
C_{p_{1},A,L,K,T}\mathbb{E}\Big[\sup\limits_{t\in[0,T]}\Big|\int_{0}^{t}e^{2\beta s}\langle \widehat{Y}_{s},\widehat{Z}_{s}dW_{s}\rangle\Big|^{\frac{p_{1}}{2}}\Big]\le9C_{p_{1},A,L,K,T}^{2}\mathbb{E}[|\widehat{\mathcal{Y}}_{T}^{\ast}|^{p_{1}}]+\frac{1}{4}\mathbb{E}\Big[\Big(\int_{0}^{T}e^{2\beta s}|\widehat{Z}_{s}|^{2}ds\Big)^{\frac{p_{1}}{2}}\Big].
\end{aligned}
\end{equation*}

\noindent Recall that $\widehat{Z}\in\mathcal{H}_{\mathbb{F}}^{p_{1}}(0,T;\mathbb{R}^{n\times d})$. Consequently, by substituting the latter inequality into \eqref{eq 4.43} and rearranging the terms, it follows that there exists a constant $\widehat{C}_{p_{1},A,L,K,T}>0$ depending only on $p_{1},A,L,K,T$ such that for all $\beta>0$,
\begin{equation*}
\begin{aligned}
\mathbb{E}\Big[\Big(\int_{0}^{T}e^{2\beta s}|\widehat{Z}_{s}|^{2}ds\Big)^{\frac{p_{1}}{2}}\Big]\le \widehat{C}_{p_{1},A,L,K,T}\mathbb{E}\big[\sup\limits_{t\in[0,T]}e^{\beta p_{1}t}|\widehat{Y}_{t}|^{p_{1}}\big].
\end{aligned}
\end{equation*}

\noindent \textbf{Step 3.} The main objective of this step is to show that
\begin{equation*}\label{eq 4.44}\tag{4.44}
\begin{aligned}
\mathbb{E}\big[\sup\limits_{t\in[0,T]}e^{\widehat{\beta} p_{1}t}|\widehat{Y}_{t}|^{p_{1}}\big]\le \frac{1}{2\widehat{C}_{p_{1},A,L,K,T}}\mathbb{E}\Big[\Big(\int_{0}^{T}e^{2\widehat{\beta} s}|\widehat{Z}_{s}|^{2}ds\Big)^{\frac{p_{1}}{2}}\Big],
\end{aligned}
\end{equation*}
for some sufficiently large $\widehat{\beta}>0$. Then, combining this with \eqref{eq 4.38} immediately leads to the uniqueness of the solution to mean-field BSDE \eqref{eq 4.35}.

Let us fix some $\widehat{\beta}>0$ large enough such that
\begin{equation*}\label{eq 4.45}\tag{4.45}
\begin{aligned}
\widehat{\beta}\ge A+2K+\frac{L^{2}}{p_{1}-1}+K^{\frac{p_{1}}{p_{1}-1}}\Big[4\widehat{C}_{p_{1},A,L,K,T}\Big(e^{KT}+\frac{72p_{1}e^{2KT}}{p_{1}-1}\Big)T^{\frac{2-p_{1}}{2}}\Big]^{\frac{1}{p_{1}-1}},
\end{aligned}
\end{equation*}
where $\widehat{C}_{p_{1},A,L,K,T}$ is defined as in \eqref{eq 4.38}. For notational simplicity, we put
\begin{equation*}\label{eq 4.46}\tag{4.46}
\mathcal{Y}_{t}:=e^{\widehat{\beta} t}\widehat{Y}_{t},\ \ \mathcal{Z}_{t}:=e^{\widehat{\beta} t}\widehat{Z}_{t},\ t\in[0,T],\ \ \text{and}\ \ \widehat{J}:=4\widehat{C}_{p_{1},A,L,K,T}\Big(e^{KT}+\frac{72p_{1}e^{2KT}}{p_{1}-1}\Big)T^{\frac{2-p_{1}}{2}}.
\end{equation*}

\noindent Then, applying Corollary 2.3 in \cite{BD03} to the case of $1<p_{1}<2$, we deduce that, for $t\in[0,T]$,
\begin{equation*}\label{eq 4.47}\tag{4.47}
\begin{aligned}
&\hspace{2.6em}|\mathcal{Y}_{t}|^{p_{1}}+\frac{p_{1}(p_{1}-1)}{2}\int_{t}^{T}|\mathcal{Y}_{s}|^{p_{1}-2}\textbf{1}_{\{\mathcal{Y}_{s}\neq0\}}|\mathcal{Z}_{s}|^{2}ds+p_{1}\int_{t}^{T}\widehat{\beta}|\mathcal{Y}_{s}|^{p_{1}}ds\\
&\le p_{1}\int_{t}^{T}|\mathcal{Y}_{s}|^{p_{1}-1}\langle \varrho(\mathcal{Y}_{s}),e^{\widehat{\beta} s}\Delta \widehat{f}_{s}\rangle ds-p_{1}\int_{t}^{T}|\mathcal{Y}_{s}|^{p_{1}-1}\langle \varrho(\mathcal{Y}_{s}),\mathcal{Z}_{s}dW_{s}\rangle.
\end{aligned}
\end{equation*}

\noindent From our assumptions \textbf{(B1)}--\textbf{(B3)}, we have
\begin{equation*}\label{eq 4.48}\tag{4.48}
\begin{aligned}
\langle \varrho(\mathcal{Y}_{s}),e^{\widehat{\beta} s}\Delta \widehat{f}_{s}\rangle \le A|\mathcal{Y}_{s}|+L|\mathcal{Z}_{s}|+K\mathbb{E}[|\mathcal{Y}_{s}|]+K\mathbb{E}[|\mathcal{Z}_{s}|],
\end{aligned}
\end{equation*}

\noindent and we substitute \eqref{eq 4.48} into \eqref{eq 4.47} to obtain that, for $t\in[0,T]$,
\begin{equation*}\label{eq 4.49}\tag{4.49}
\begin{aligned}
&\hspace{2.6em}|\mathcal{Y}_{t}|^{p_{1}}+\frac{p_{1}(p_{1}-1)}{2}\int_{t}^{T}|\mathcal{Y}_{s}|^{p_{1}-2}\textbf{1}_{\{\mathcal{Y}_{s}\neq0\}}|\mathcal{Z}_{s}|^{2}ds+p_{1}\int_{t}^{T}\widehat{\beta}|\mathcal{Y}_{s}|^{p_{1}}ds\\
&\le p_{1}\int_{t}^{T}\Big(A|\mathcal{Y}_{s}|^{p_{1}}+L|\mathcal{Y}_{s}|^{p_{1}-1}|\mathcal{Z}_{s}|+K|\mathcal{Y}_{s}|^{p_{1}-1}\mathbb{E}[|\mathcal{Y}_{s}|]+K|\mathcal{Y}_{s}|^{p_{1}-1}\mathbb{E}[|\mathcal{Z}_{s}|]\Big)ds\\
&\hspace{1.6em}-p_{1}\int_{t}^{T}|\mathcal{Y}_{s}|^{p_{1}-1}\langle \varrho(\mathcal{Y}_{s}),\mathcal{Z}_{s}dW_{s}\rangle.
\end{aligned}
\end{equation*}

\noindent It follows from Young's inequality and H\"{o}lder's inequality that
\begin{equation*}\label{eq 4.50}\tag{4.50}
\begin{aligned}
p_{1}L|\mathcal{Y}_{s}|^{p_{1}-1}|\mathcal{Z}_{s}|\le\frac{p_{1}L^{2}}{p_{1}-1}|\mathcal{Y}_{s}|^{p_{1}}+\frac{p_{1}(p_{1}-1)}{4}|\mathcal{Y}_{s}|^{p_{1}-2}\textbf{1}_{\{\mathcal{Y}_{s}\neq0\}}|\mathcal{Z}_{s}|^{2},
\end{aligned}
\end{equation*}
\begin{equation*}\label{eq 4.51}\tag{4.51}
\begin{aligned}
p_{1}K|\mathcal{Y}_{s}|^{p_{1}-1}\mathbb{E}[|\mathcal{Y}_{s}|]\le (p_{1}-1)K|\mathcal{Y}_{s}|^{p_{1}}+K\mathbb{E}[|\mathcal{Y}_{s}|^{p_{1}}],
\end{aligned}
\end{equation*}
and
\begin{equation*}\label{eq 4.52}\tag{4.52}
\begin{aligned}
p_{1}K|\mathcal{Y}_{s}|^{p_{1}-1}\mathbb{E}[|\mathcal{Z}_{s}|]\le (p_{1}-1)K^{\frac{p_{1}}{p_{1}-1}}\widehat{J}^{\frac{1}{p_{1}-1}}|\mathcal{Y}_{s}|^{p_{1}}+\widehat{J}^{-1}\mathbb{E}[|\mathcal{Z}_{s}|^{p_{1}}].
\end{aligned}
\end{equation*}

\noindent By inserting \eqref{eq 4.50}--\eqref{eq 4.52} into \eqref{eq 4.49}, we get that, for $t\in[0,T]$,
\begin{equation*}
\begin{aligned}
&\hspace{2.6em}|\mathcal{Y}_{t}|^{p_{1}}+\frac{p_{1}(p_{1}-1)}{4}\int_{t}^{T}|\mathcal{Y}_{s}|^{p_{1}-2}\textbf{1}_{\{\mathcal{Y}_{s}\neq0\}}|\mathcal{Z}_{s}|^{2}ds+p_{1}\int_{t}^{T}\widehat{\beta}|\mathcal{Y}_{s}|^{p_{1}}ds\\
&\le \Big[Ap_{1}+\frac{p_{1}L^{2}}{p_{1}-1}+(p_{1}-1)K+(p_{1}-1)K^{\frac{p_{1}}{p_{1}-1}}\widehat{J}^{\frac{1}{p_{1}-1}}\Big]\int_{t}^{T}|\mathcal{Y}_{s}|^{p_{1}}ds\\
&\hspace{1.6em}+K\int_{t}^{T}\mathbb{E}[|\mathcal{Y}_{s}|^{p_{1}}]ds+\widehat{J}^{-1}\int_{t}^{T}\mathbb{E}[|\mathcal{Z}_{s}|^{p_{1}}]ds-p_{1}\int_{t}^{T}|\mathcal{Y}_{s}|^{p_{1}-1}\langle \varrho(\mathcal{Y}_{s}),\mathcal{Z}_{s}dW_{s}\rangle.
\end{aligned}
\end{equation*}

\noindent From H\"{o}lder's inequality, since $1<p_{1}<2$, we have
\begin{equation*}
\begin{aligned}
\int_{t}^{T}\mathbb{E}[|\mathcal{Z}_{s}|^{p_{1}}]ds \le T^{\frac{2-p_{1}}{2}}\mathbb{E}\Big[\Big(\int_{t}^{T}|\mathcal{Z}_{s}|^{2}ds\Big)^{\frac{p_{1}}{2}}\Big].
\end{aligned}
\end{equation*}

\noindent Hence, for $t\in[0,T]$,
\begin{equation*}\label{eq 4.53}\tag{4.53}
\begin{aligned}
&\hspace{2.6em}|\mathcal{Y}_{t}|^{p_{1}}+\frac{p_{1}(p_{1}-1)}{4}\int_{t}^{T}|\mathcal{Y}_{s}|^{p_{1}-2}\textbf{1}_{\{\mathcal{Y}_{s}\neq0\}}|\mathcal{Z}_{s}|^{2}ds+p_{1}\int_{t}^{T}\widehat{\beta}|\mathcal{Y}_{s}|^{p_{1}}ds\\
&\le \Big[Ap_{1}+\frac{p_{1}L^{2}}{p_{1}-1}+(p_{1}-1)K+(p_{1}-1)K^{\frac{p_{1}}{p_{1}-1}}\widehat{J}^{\frac{1}{p_{1}-1}}\Big]\int_{t}^{T}|\mathcal{Y}_{s}|^{p_{1}}ds+K\int_{t}^{T}\mathbb{E}[|\mathcal{Y}_{s}|^{p_{1}}]ds\\
&\hspace{1.6em}+\widehat{J}^{-1} T^{\frac{2-p_{1}}{2}}\mathbb{E}\Big[\Big(\int_{t}^{T}|\mathcal{Z}_{s}|^{2}ds\Big)^{\frac{p_{1}}{2}}\Big]-p_{1}\int_{t}^{T}|\mathcal{Y}_{s}|^{p_{1}-1}\langle \varrho(\mathcal{Y}_{s}),\mathcal{Z}_{s}dW_{s}\rangle.
\end{aligned}
\end{equation*}

\noindent Thanks to Burkholder-Davis-Gundy's inequality and Young's inequality, it is not difficult to check that $\{\int_{0}^{t}|\mathcal{Y}_{s}|^{p_{1}-1}\langle \varrho(\mathcal{Y}_{s}),\mathcal{Z}_{s}dW_{s}\rangle\}_{t\in[0,T]}$ is a uniformly integrable martingale. Thus, by taking the expectation on both sides of \eqref{eq 4.53} and recalling \eqref{eq 4.45}, we obtain that
\begin{equation*}\label{eq 4.54}\tag{4.54}
\begin{aligned}
\frac{p_{1}(p_{1}-1)}{4}\mathbb{E}\Big[\int_{0}^{T}|\mathcal{Y}_{s}|^{p_{1}-2}\textbf{1}_{\{\mathcal{Y}_{s}\neq0\}}|\mathcal{Z}_{s}|^{2}ds\Big]\le \widehat{J}^{-1} T^{\frac{2-p_{1}}{2}}\mathbb{E}\Big[\Big(\int_{0}^{T}|\mathcal{Z}_{s}|^{2}ds\Big)^{\frac{p_{1}}{2}}\Big].
\end{aligned}
\end{equation*}

\noindent Moreover, with using \eqref{eq 4.45} again, by taking first the supremum on both sides of \eqref{eq 4.53} and then the expectation, we get, for $t\in[0,T]$,
\begin{equation*}
\begin{aligned}
\mathbb{E}\big[\sup\limits_{s\in[t,T]}|\mathcal{Y}_{s}|^{p_{1}}\big]&\le K\int_{t}^{T}\mathbb{E}\big[\sup\limits_{r\in[s,T]}|\mathcal{Y}_{r}|^{p_{1}}\big]ds+\widehat{J}^{-1} T^{\frac{2-p_{1}}{2}}\mathbb{E}\Big[\Big(\int_{0}^{T}|\mathcal{Z}_{s}|^{2}ds\Big)^{\frac{p_{1}}{2}}\Big]\\
&\hspace{2em}+p_{1}\mathbb{E}\Big[\sup\limits_{t\in[0,T]}\Big|\int_{t}^{T}|\mathcal{Y}_{s}|^{p_{1}-1}\langle \varrho(\mathcal{Y}_{s}),\mathcal{Z}_{s}dW_{s}\rangle\Big|\Big].
\end{aligned}
\end{equation*}

\noindent Then, Gronwall's inequality yields
\begin{equation*}\label{eq 4.55}\tag{4.55}
\begin{aligned}
\mathbb{E}\big[\sup\limits_{s\in[0,T]}|\mathcal{Y}_{s}|^{p_{1}}\big]&\le e^{KT}\widehat{J}^{-1} T^{\frac{2-p_{1}}{2}}\mathbb{E}\Big[\Big(\int_{0}^{T}|\mathcal{Z}_{s}|^{2}ds\Big)^{\frac{p_{1}}{2}}\Big]\\
&\hspace{2em}+p_{1}e^{KT}\mathbb{E}\Big[\sup\limits_{t\in[0,T]}\Big|\int_{t}^{T}|\mathcal{Y}_{s}|^{p_{1}-1}\langle \varrho(\mathcal{Y}_{s}),\mathcal{Z}_{s}dW_{s}\rangle\Big|\Big].
\end{aligned}
\end{equation*}

\noindent We observe that by the Burkholder-Davis-Gundy and the Young inequalities,
\begin{equation*}\label{eq 4.56}\tag{4.56}
\begin{aligned}
&\hspace{2em}p_{1}e^{KT}\mathbb{E}\Big[\sup\limits_{t\in[0,T]}\Big|\int_{t}^{T}|\mathcal{Y}_{s}|^{p_{1}-1}\langle \varrho(\mathcal{Y}_{s}),\mathcal{Z}_{s}dW_{s}\rangle\Big|\Big]\\
%&\le 6p_{1}e^{KT}\mathbb{E}\Big[\Big(\int_{0}^{T}|\mathcal{Y}_{s}|^{2p_{1}-2}|\mathcal{Z}_{s}|^{2}ds\Big)^{\frac{1}{2}}\Big]\\
&\le 6p_{1}e^{KT}\mathbb{E}\Big[\sup\limits_{s\in[0,T]}|\mathcal{Y}_{s}|^{\frac{p_{1}}{2}}\Big(\int_{0}^{T}|\mathcal{Y}_{s}|^{p_{1}-2}\textbf{1}_{\{\mathcal{Y}_{s}\neq0\}}|\mathcal{Z}_{s}|^{2}ds\Big)^{\frac{1}{2}}\Big]\\
&\le \frac{1}{2}\mathbb{E}\big[\sup\limits_{s\in[0,T]}|\mathcal{Y}_{s}|^{p_{1}}\big]+18p_{1}^{2}e^{2KT}\mathbb{E}\Big[\int_{0}^{T}|\mathcal{Y}_{s}|^{p_{1}-2}\textbf{1}_{\{\mathcal{Y}_{s}\neq0\}}|\mathcal{Z}_{s}|^{2}ds\Big].
\end{aligned}
\end{equation*}

\noindent Hence, combining \eqref{eq 4.54}--\eqref{eq 4.56} gives
\begin{equation*}
\begin{aligned}
\mathbb{E}\big[\sup\limits_{s\in[0,T]}|\mathcal{Y}_{s}|^{p_{1}}\big]&\le 2\Big(e^{KT}+\frac{72p_{1}e^{2KT}}{p_{1}-1}\Big)T^{\frac{2-p_{1}}{2}}\widehat{J}^{-1}\mathbb{E}\Big[\Big(\int_{0}^{T}|\mathcal{Z}_{s}|^{2}ds\Big)^{\frac{p_{1}}{2}}\Big]\\
&=\frac{1}{2\widehat{C}_{p_{1},A,L,K,T}}\mathbb{E}\Big[\Big(\int_{0}^{T}|\mathcal{Z}_{s}|^{2}ds\Big)^{\frac{p_{1}}{2}}\Big].
\end{aligned}
\end{equation*}
Recall the notations in \eqref{eq 4.46}, \eqref{eq 4.44} is thus proved.

Notice that \eqref{eq 4.38} and \eqref{eq 4.44} imply the uniqueness of solution for mean-field BSDE \eqref{eq 4.35}.

\smallskip

\noindent It remains to prove \underline{the existence}. Given a pair of processes $(U,V)\in\mathcal{S}_{\mathbb{F}}^{1}(0,T;\mathbb{R}^{n})\times \mathcal{H}_{\mathbb{F}}^{1}(0,T;\mathbb{R}^{n\times d})$, we consider the BSDE
\begin{equation*}\label{eq 4.57}\tag{4.57}
	Y_{t}=\xi+\int_{t}^{T}f(s,Y_{s},Z_{s},\mathbb{P}_{(U_{s},V_{s})})ds-\int_{t}^{T}Z_{s}dW_{s},\ t\in[0,T].
\end{equation*}
Thanks to assumptions \textbf{(B2)} and \textbf{(B5)}, we obtain
\begin{equation*}
\begin{aligned}
&\mathbb{E}\bigg[\sup\limits_{t\in[0,T]}\bigg(\mathbb{E}\Big[|\xi|+\int_{0}^{T}|f(s,0,0,\mathbb{P}_{(U_{s},V_{s})})|ds\Big|\mathcal{F}_{t}\Big]\bigg)\bigg]\\
\le\ &\mathbb{E}\bigg[\sup\limits_{t\in[0,T]}\bigg(\mathbb{E}\Big[|\xi|+\int_{0}^{T}|f(s,0,0,\delta_{0})|ds\Big|\mathcal{F}_{t}\Big]\bigg)\bigg]+KT\mathbb{E}\big[\sup\limits_{s\in[0,T]}|U_{s}|\big]\\
&\hspace{1em}+KT^{\frac{1}{2}}\mathbb{E}\Big[\Big(\int_{0}^{T}|V_{s}|^{2}ds\Big)^{\frac{1}{2}}\Big]<+\infty.
\end{aligned}
\end{equation*}
Then, based on our assumptions \textbf{(B1)}--\textbf{(B4)}, it follows from \cite[Theorem 3]{F18} that BSDE \eqref{eq 4.57} has a unique solution $(Y,Z)\in \mathcal{S}_{\mathbb{F}}^{1}(0,T;\mathbb{R}^{n})\times \mathcal{H}_{\mathbb{F}}^{1}(0,T;\mathbb{R}^{n\times d})$. This allows to consider the following Picard iteration scheme: Put $(Y^{0},Z^{0}):=(0,0)$, and then define recursively $(Y^{n},Z^{n})\in \mathcal{S}_{\mathbb{F}}^{1}(0,T;\mathbb{R}^{n})\times \mathcal{H}_{\mathbb{F}}^{1}(0,T;\mathbb{R}^{n\times d}),\ n\ge1$, as the unique solution to the following BSDE:
\begin{equation*}\label{eq 4.58}\tag{4.58}
	Y_{t}^{n}=\xi+\int_{t}^{T}f(s,Y_{s}^{n},Z_{s}^{n},\mathbb{P}_{(Y_{s}^{n-1},Z_{s}^{n-1})})ds-\int_{t}^{T}Z_{s}^{n}dW_{s},\ t\in[0,T].
\end{equation*}

\noindent Next, we set $\widehat{Y}^{n,k}:=Y^{n+k}-Y^{n}$ and $\widehat{Z}^{n,k}:=Z^{n+k}-Z^{n}$, for all $n,k\ge1$. Then, $(\widehat{Y}^{n,k},\widehat{Z}^{n,k})\in \mathcal{S}_{\mathbb{F}}^{1}(0,T;\mathbb{R}^{n})\times \mathcal{H}_{\mathbb{F}}^{1}(0,T;\mathbb{R}^{n\times d})$ solves the following BSDE:
\begin{equation*}\label{eq 4.59}\tag{4.59}
\begin{aligned}
\widehat{Y}_{t}^{n,k}=\int_{t}^{T}\Delta\widehat{f}^{n,k}(s,\widehat{Y}_{s}^{n,k},\widehat{Z}_{s}^{n,k})ds-\int_{t}^{T}\widehat{Z}_{s}^{n,k}dW_{s},\ t\in[0,T],
\end{aligned}
\end{equation*}
where $\Delta\widehat{f}^{n,k}(s,y,z):=f(s,y+Y_{s}^{n},z+Z_{s}^{n},\mathbb{P}_{(Y_{s}^{n+k-1},Z_{s}^{n+k-1})})-f(s,Y_{s}^{n},Z_{s}^{n},\mathbb{P}_{(Y_{s}^{n-1},Z_{s}^{n-1})})$, $(s,y,z)\in[0,T]\times\mathbb{R}^{n}\times\mathbb{R}^{n\times d}$. By employing an argument similar to the proof of \eqref{eq 4.5} for Theorem \ref{th 4.9} and using assumptions \textbf{(B1)}--\textbf{(B3)}, we show that, for $\gamma^{n,k}_{1}:=\Vert\widehat{Y}^{n,k}\Vert_{\mathcal{S}^{1}_{\mathbb{F}}(0,T)}$ and $\gamma^{n,k}_{2}:=\Vert\widehat{Z}^{n,k}\Vert_{\mathcal{H}^{1}_{\mathbb{F}}(0,T)}$,
\begin{equation*}
\begin{aligned}
|\widehat{Y}_{t}^{n,k}|\le \big(\gamma^{n-1,k}_{1}KT+\gamma^{n-1,k}_{2}KT^{\frac{1}{2}}+\widehat{Q}^{n,k}(t)\big)e^{AT},\ t\in[0,T],\ n,k\ge1,
\end{aligned}
\end{equation*}
where $\widehat{Q}^{n,k}(t):=2M\mathbb{E}_{t}\big[\int_{0}^{T}\big(\vartheta_{s}+|Y_{s}^{n}|+\mathbb{E}[|Y_{s}^{n-1}|]+\mathbb{E}[|Z_{s}^{n-1}|]+|Z_{s}^{n}|+|Z^{n+k}_{s}|\big)^{\alpha}ds\big]\in \mathcal{S}_{\mathbb{F}}^{r}(0,T;\mathbb{R})$, given that $\alpha r\le1$ with $r>1$. Then again we get that, for all $n,k\ge1$, $(\widehat{Y}_{t}^{n,k})_{t\in[0,T]}\in \mathcal{S}_{\mathbb{F}}^{r}(0,T;\mathbb{R}^{n})$, for $\alpha r\le1$ for $r>1$. In particular, we fix some $1<r_{0}\le2$ such that $\alpha r_{0}\le1$, and thus $(\widehat{Y}_{t}^{n,k})_{t\in[0,T]}\in \mathcal{S}_{\mathbb{F}}^{r_{0}}(0,T;\mathbb{R}^{n}),\ \text{for all}\ n,k\ge1$.

On the other hand, in view of \eqref{eq 4.59}, by combining assumptions \textbf{(B1)}--\textbf{(B3)}, we have, $d\mathbb{P}\times ds\text{-}a.e.$, for all $s\in[0,T],\ y\in\mathbb{R}^{n}$ and $z\in\mathbb{R}^{n\times d}$,
\begin{equation*}\label{eq 4.60}\tag{4.60}
\begin{aligned}
\langle y,\Delta\widehat{f}^{n,k}(s,y,z)\rangle\le A|y|^{2}+L|y||z|+K|y|\mathbb{E}[|\widehat{Y}_{s}^{n-1,k}|+|\widehat{Z}_{s}^{n-1,k}|].
\end{aligned}
\end{equation*}
Then, by applying Lemma \ref{le new} with $p=r_{0},\ \lambda_{1}=A,\ \lambda_{2}=L,\ g_{t}=K\mathbb{E}[|\widehat{Y}_{t}^{n-1,k}|+|\widehat{Z}_{t}^{n-1,k}|],$ we get  $\widehat{Z}^{n,k}\in \mathcal{H}_{\mathbb{F}}^{r_{0}}(0,T;\mathbb{R}^{n\times d})$. Moreover, there exists two constants $\widetilde{C}_{r_{0}}>0$ depending only on $r_{0}$ and $C_{r_{0},A,L}>0$ depending only on $r_{0},A,L$, such that, for all $n\ge2,\ k\ge1$ and $t\in[0,T]$,
\begin{equation*}\label{eq 4.61}\tag{4.61}
\begin{aligned}
&\hspace{2em}\mathbb{E}\big[\sup\limits_{s\in[t,T]}|\widehat{Y}_{s}^{n,k}|^{r_{0}}\big]+\mathbb{E}\Big[\Big(\int_{t}^{T}|\widehat{Z}_{s}^{n,k}|^{2}ds\Big)^{\frac{r_{0}}{2}}\Big]\\
&\le(2K)^{r_{0}}\widetilde{C}_{r_{0}}e^{C_{r_{0},A,L}(T-t)}\Big((T-t)^{r_{0}}\mathbb{E}\big[\sup\limits_{s\in[t,T]}|\widehat{Y}_{s}^{n-1,k}|^{r_{0}}\big]+(T-t)^{\frac{r_{0}}{2}}\mathbb{E}\Big[\Big(\int_{t}^{T}|\widehat{Z}_{s}^{n-1,k}|^{2}ds\Big)^{\frac{r_{0}}{2}}\Big]\Big).
\end{aligned}
\end{equation*}

\noindent With the notations
\begin{equation*}
\begin{aligned}
\upsilon:=\min\Big\{\frac{\ln 2}{C_{r_{0},A,L}},\Big(\frac{1}{8\cdot(2K)^{r_{0}}\widetilde{C}_{r_{0}}}\Big)^{\frac{1}{r_{0}}},\Big(\frac{1}{8\cdot(2K)^{r_{0}}\widetilde{C}_{r_{0}}}\Big)^{\frac{2}{r_{0}}}\Big\}\ \ \text{and}\ \ \overline{T}_{j}:=(T-j\upsilon)\vee 0,\ j\ge1,
\end{aligned}
\end{equation*}

\noindent we have, for all $t\in[\overline{T}_{1},T]$,
\begin{equation*}\label{eq 4.62}\tag{4.62}
\begin{aligned}
e^{C_{r_{0},A,L}(T-t)}\le 2,\ \ (2K)^{r_{0}}\widetilde{C}_{r_{0}}(T-t)^{r_{0}}\le \frac{1}{8},\ \ (2K)^{r_{0}}\widetilde{C}_{r_{0}}(T-t)^{\frac{r_{0}}{2}}\le\frac{1}{8}.
\end{aligned}
\end{equation*}

\noindent Then we substitute \eqref{eq 4.62} in \eqref{eq 4.61} to obtain that, for all $n\ge2,\ k\ge1$ and $t\in[\overline{T}_{1},T]$,
\begin{equation*}\label{eq 4.63}\tag{4.63}
\begin{aligned}
\hspace{2em}\mathbb{E}\big[\sup\limits_{s\in[t,T]}|\widehat{Y}_{s}^{n,k}|^{r_{0}}\big]+\mathbb{E}\Big[\Big(\int_{t}^{T}|\widehat{Z}_{s}^{n,k}|^{2}ds\Big)^{\frac{r_{0}}{2}}\Big]\le\frac{1}{4}\mathbb{E}\big[\sup\limits_{s\in[t,T]}|\widehat{Y}_{s}^{n-1,k}|^{r_{0}}\big]+\frac{1}{4}\mathbb{E}\Big[\Big(\int_{t}^{T}|\widehat{Z}_{s}^{n-1,k}|^{2}ds\Big)^{\frac{r_{0}}{2}}\Big].
\end{aligned}
\end{equation*}
Therefore, by using the argument developed in Step 3 of Theorem \ref{th 4.10}, we get, for all $t\in[\overline{T}_{1},T]$,
\begin{equation*}
\begin{aligned}
&\hspace{2em}\sup\limits_{n\ge2}\sup\limits_{k\ge1}\Big(\mathbb{E}\big[\sup\limits_{s\in[t,T]}|\widehat{Y}_{s}^{n,k}|^{r_{0}}\big]+\mathbb{E}\Big[\Big(\int_{t}^{T}|\widehat{Z}_{s}^{n,k}|^{2}ds\Big)^{\frac{r_{0}}{2}}\Big]\Big)\\
&\le\mathbb{E}\big[\sup\limits_{s\in[0,T]}|Y_{s}^{2}-Y_{s}^{1}|^{r_{0}}\big]+\mathbb{E}\Big[\Big(\int_{0}^{T}|Z_{s}^{2}-Z_{s}^{1}|^{2}ds\Big)^{\frac{r_{0}}{2}}\Big]<+\infty.
\end{aligned}
\end{equation*}

\noindent This latter estimate allows to conclude from \eqref{eq 4.63} with an argument similar to the proof of \eqref{eq 4.26} in Theorem \ref{th 4.10} that there exists $(Y^{\ast},Z^{\ast})\in\mathcal{S}_{\mathbb{F}}^{r_{0}}(\overline{T}_{1},T;\mathbb{R}^{n})\times \mathcal{H}_{\mathbb{F}}^{r_{0}}(\overline{T}_{1},T;\mathbb{R}^{n\times d})$ such that
\begin{equation*}
\begin{aligned}
\lim\limits_{n\rightarrow+\infty}\mathbb{E}\Big[\sup\limits_{s\in[\overline{T}_{1},T]}|(Y_{s}^{n}-Y_{s}^{1})-Y^{\ast}_{s}|^{r_{0}}+\Big(\int_{\overline{T}_{1}}^{T}|(Z_{s}^{n}-Z_{s}^{1})-Z^{\ast}_{s}|^{2}ds\Big)^{\frac{r_{0}}{2}}\Big]=0.
\end{aligned}
\end{equation*}

\noindent Recall that $(Y^{1},Z^{1})\in\mathcal{S}_{\mathbb{F}}^{1}(0,T;\mathbb{R}^{n})\times \mathcal{H}_{\mathbb{F}}^{1}(0,T;\mathbb{R}^{n\times d})$. Hence, by taking the limit in BSDE \eqref{eq 4.58}, we prove that $(Y_{t},Z_{t})_{t\in[\overline{T}_{1},T]}:=(Y^{\ast}_{t}+Y_{t}^{1},Z^{\ast}_{t}+Z_{t}^{1})_{t\in[\overline{T}_{1},T]}\in\mathcal{S}_{\mathbb{F}}^{1}(\overline{T}_{1},T;\mathbb{R}^{n})\times \mathcal{H}_{\mathbb{F}}^{1}(\overline{T}_{1},T;\mathbb{R}^{n\times d})$ is a solution of the mean-field BSDE \eqref{eq 4.35} on the time interval $[\overline{T}_{1},T]$.

As in Step 3 of Theorem \ref{th 4.10}, a finite iteration extends the solution $(Y,Z)$ to the whole interval $[0,T]$. The proof is now complete.

\end{proof}

\section*{Appendix}

\begin{mylemma}\label{le 4.5}
Let $p>0$. Suppose that the generator $f:\Omega\times[0,T]\times\mathbb{R}^{n}\times\mathbb{R}^{n\times d}\rightarrow\mathbb{R}^{n}$ is $\mathbb{F}$-progressively measurable and there exist constants $\lambda_{1}>0,\ \lambda_{2}>0$, and two nonnegative processes $g=(g_{t})_{t\in[0,T]}\in\mathcal{L}^{p}_{\mathbb{F}}(0,T;\mathbb{R})$, and $\theta=(\theta_{t})_{t\in[0,T]}\in\mathcal{L}^{\frac{p}{2}}_{\mathbb{F}}(0,T;\mathbb{R})$ such that, $d\mathbb{P}\times dt\text{-}a.e.$, for all $t\in[0,T],\ y\in\mathbb{R}^{n}$ and $z\in\mathbb{R}^{n\times d}$,
\begin{equation*}
\begin{aligned}
\langle y,f(t,y,z)\rangle \le \lambda_{1}|y|^{2}+\lambda_{2}|y||z|+|y|g_{t}+\theta_{t}.
\end{aligned}
\end{equation*}

\noindent Let $(Y,Z)$ be a solution of BSDE \eqref{eq 1.1} such that $Y\in\mathcal{S}^{p}_{\mathbb{F}}(0,T;\mathbb{R}^{n})$. Then $Z\in \mathcal{H}^{p}_{\mathbb{F}}(0,T;\mathbb{R}^{n\times d})$ and there exists a constant $C_{p,\lambda_{1},\lambda_{2},T}>0$ depending only on $p,\lambda_{1},\lambda_{2},T$ as well as another constant $C_{p}>0$ depending only on $p$, such that, for all $t\in[0,T]$,
\begin{equation*}
\begin{aligned}
\mathbb{E}\Big[\Big(\int_{t}^{T}|Z_{s}|^{2}ds\Big)^{\frac{p}{2}}\Big]\le C_{p,\lambda_{1},\lambda_{2},T}\mathbb{E}\big[\sup\limits_{s\in[t,T]}|Y_{s}|^{p}\big]+C_{p}\Big(\mathbb{E}\Big[\Big(\int_{t}^{T}g_{s}ds\Big)^{p}\Big]+\mathbb{E}\Big[\Big(\int_{t}^{T}\theta_{s}ds\Big)^{\frac{p}{2}}\Big]\Big).
\end{aligned}
\end{equation*}
\end{mylemma}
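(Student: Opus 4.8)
The plan is to obtain the estimate from It\^{o}'s formula applied to $|Y|^{2}$, using the structural assumption on $f$ to control the drift and a first Young inequality to move the quadratic-in-$Z$ term to the left; the $\frac{p}{2}$-power bound is then produced by a Burkholder--Davis--Gundy step followed by a second Young inequality that absorbs part of $\mathbb{E}[(\int|Z|^{2})^{p/2}]$, and everything is made rigorous by localization and Fatou's lemma.

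First I would introduce the stopping times $\tau_{k}:=\inf\{t\in[0,T]:\int_{0}^{t}|Z_{s}|^{2}ds\ge k\}\wedge T$, so that $\int_{0}^{\tau_{k}}|Z_{s}|^{2}ds\le k$ and $\tau_{k}\uparrow T$. On $[t\wedge\tau_{k},\tau_{k}]$, It\^{o}'s formula gives
\[
|Y_{t\wedge\tau_{k}}|^{2}+\int_{t\wedge\tau_{k}}^{\tau_{k}}|Z_{s}|^{2}ds=|Y_{\tau_{k}}|^{2}+2\int_{t\wedge\tau_{k}}^{\tau_{k}}\langle Y_{s},f(s,Y_{s},Z_{s})\rangle ds-2\int_{t\wedge\tau_{k}}^{\tau_{k}}\langle Y_{s},Z_{s}dW_{s}\rangle.
\]
The assumption on the generator combined with $2\lambda_{2}|Y_{s}||Z_{s}|\le 2\lambda_{2}^{2}|Y_{s}|^{2}+\frac{1}{2}|Z_{s}|^{2}$ bounds the drift; dropping $|Y_{t\wedge\tau_{k}}|^{2}\ge 0$ and moving $\frac{1}{2}\int|Z_{s}|^{2}ds$ to the left yields, with $Y_{T}^{\ast}:=\sup_{s\in[t,T]}|Y_{s}|$,
\[
\tfrac{1}{2}\int_{t\wedge\tau_{k}}^{\tau_{k}}|Z_{s}|^{2}ds\le\big(1+(2\lambda_{1}+2\lambda_{2}^{2})T\big)|Y_{T}^{\ast}|^{2}+2Y_{T}^{\ast}\int_{t}^{T}g_{s}ds+2\int_{t}^{T}\theta_{s}ds+2\sup_{r}\Big|\int_{t\wedge\tau_{k}}^{r}\langle Y_{s},Z_{s}dW_{s}\rangle\Big|.
\]

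Next I would raise both sides to the power $\frac{p}{2}$, take expectations, and use $(\sum_{i}a_{i})^{p/2}\le c_{p}\sum_{i}a_{i}^{p/2}$ (the constant $c_{p}$ depending on whether $\frac{p}{2}\le 1$ or $\frac{p}{2}>1$). Writing $\Xi_{k}:=\int_{t\wedge\tau_{k}}^{\tau_{k}}|Z_{s}|^{2}ds$, the stochastic term is handled by Burkholder--Davis--Gundy,
\[
\mathbb{E}\Big[\sup_{r}\Big|\int_{t\wedge\tau_{k}}^{r}\langle Y_{s},Z_{s}dW_{s}\rangle\Big|^{p/2}\Big]\le d_{p}\,\mathbb{E}\big[|Y_{T}^{\ast}|^{p/2}\Xi_{k}^{p/4}\big]\le d_{p}\big(\varepsilon\,\mathbb{E}[\Xi_{k}^{p/2}]+C_{\varepsilon}\,\mathbb{E}[|Y_{T}^{\ast}|^{p}]\big),
\]
the last step being Young's inequality with conjugate exponents $2,2$ applied to $\Xi_{k}^{p/4}$ and $|Y_{T}^{\ast}|^{p/2}$. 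Choosing $\varepsilon$ small enough (relative to $c_{p}d_{p}$) absorbs a fraction of $\mathbb{E}[\Xi_{k}^{p/2}]$ on the left, while $\mathbb{E}[|Y_{T}^{\ast}|^{p/2}(\int_{t}^{T}g_{s}ds)^{p/2}]$ is split by Young into $\frac{1}{2}\mathbb{E}[|Y_{T}^{\ast}|^{p}]+\frac{1}{2}\mathbb{E}[(\int_{t}^{T}g_{s}ds)^{p}]$. Rearranging gives the desired bound for $\mathbb{E}[\Xi_{k}^{p/2}]$ with constants independent of $k$, and since $\Xi_{k}\uparrow\int_{t}^{T}|Z_{s}|^{2}ds$, Fatou's lemma delivers the estimate; its right-hand side is finite because $Y\in\mathcal{S}^{p}_{\mathbb{F}}(0,T;\mathbb{R}^{n})$, $g\in\mathcal{L}^{p}_{\mathbb{F}}(0,T;\mathbb{R})$ and $\theta\in\mathcal{L}^{p/2}_{\mathbb{F}}(0,T;\mathbb{R})$, whence $Z\in\mathcal{H}^{p}_{\mathbb{F}}(0,T;\mathbb{R}^{n\times d})$. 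The main obstacle is the absorption step: the constant $c_{p}$ from the power inequality multiplies the BDG/Young bound and itself depends on the regime $\frac{p}{2}\lessgtr 1$, so the numerical constants must be tracked carefully to guarantee a strictly positive coefficient of $\mathbb{E}[\Xi_{k}^{p/2}]$ remains on the left after absorption; the localization and Fatou bookkeeping are routine by comparison.
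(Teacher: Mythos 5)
Your proposal is correct and takes essentially the same route as the proof the paper relies on: the paper states this lemma without proof, deferring to Propositions 1--4 of [F18] and Proposition 3.2 of [BD03], and those arguments proceed exactly as yours does — It\^{o}'s formula for $|Y|^{2}$ localized by $\tau_{k}$, a first Young inequality ($2\lambda_{2}|Y||Z|\le 2\lambda_{2}^{2}|Y|^{2}+\tfrac{1}{2}|Z|^{2}$) to move half the quadratic variation to the left, then the $\tfrac{p}{2}$-power, BDG (valid for exponents below $1$ since the integral is a continuous local martingale), a second Young step absorbing $\varepsilon\,\mathbb{E}[\Xi_{k}^{p/2}]$ (legitimate because $\Xi_{k}\le k$ makes this quantity finite), and Fatou as $k\rightarrow+\infty$. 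Your bookkeeping also correctly yields the constant structure of the statement, with the Young splits of the $g$- and $\theta$-terms contributing coefficients depending only on $p$ while $\lambda_{1},\lambda_{2},T$ enter only through the coefficient of $\mathbb{E}\big[\sup_{s\in[t,T]}|Y_{s}|^{p}\big]$.
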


\begin{mylemma}\label{le 4.6}
Let $p>1$. Suppose that the generator $f:\Omega\times[0,T]\times\mathbb{R}^{n}\times\mathbb{R}^{n\times d}\rightarrow\mathbb{R}^{n}$ is $\mathbb{F}$-progressively measurable and there exist a constant $\lambda>0$ and a nonnegative process $g=(g_{t})_{t\in[0,T]}\in\mathcal{L}^{p}_{\mathbb{F}}(0,T;\mathbb{R})$, such that, $d\mathbb{P}\times dt\text{-}a.e.$, for all $t\in[0,T],\ y\in\mathbb{R}^{n}$ and $z\in\mathbb{R}^{n\times d}$,
\begin{equation*}
\begin{aligned}
|y|^{p-1}\Big\langle\frac{y}{|y|}\mathbf{1}_{\{|y|\neq0\}},f(t,y,z)\Big\rangle\le \varphi(|y|^{p})+\lambda|y|^{p-1}|z|+|y|^{p-1}g_{t},
\end{aligned}
\end{equation*}
where $\varphi(\cdot):[0,+\infty)\rightarrow[0,+\infty)$ is a nondecreasing and concave function with $\varphi(0)=0$.

Let $(Y,Z)\in\mathcal{S}_{\mathbb{F}}^{p}(0,T;\mathbb{R}^{n})\times \mathcal{H}_{\mathbb{F}}^{p}(0,T;\mathbb{R}^{n\times d})$ be a solution of BSDE \eqref{eq 1.1}. Then there exists a constant $C_{p,\lambda}>0$ depending only on $p,\lambda$ such that, for all $t\in[0,T]$,
\begin{equation*}
\begin{aligned}
\mathbb{E}\big[\sup\limits_{s\in[t,T]}|Y_{s}|^{p}\big]\le e^{C_{p,\lambda}(T-t)}\Big(\mathbb{E}[|\xi|^{p}]+\int_{t}^{T}\varphi(\mathbb{E}[|Y_{s}|^{p}])ds+\mathbb{E}\Big[\Big(\int_{t}^{T}g_{s}ds\Big)^{p}\Big]\Big).
\end{aligned}
\end{equation*}
\end{mylemma}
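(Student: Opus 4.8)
The plan is to obtain the estimate by applying the generalized It\^o formula to $|Y_s|^p$, using the structural hypothesis to generate a ``good'' quadratic term in $Z$ that absorbs the cross term $\lambda|Y|^{p-1}|Z|$, and then closing the argument with Jensen's inequality for the concave $\varphi$, the Burkholder--Davis--Gundy (BDG) inequality, and a backward Gronwall step.

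First I would fix a localizing sequence $\tau_n\uparrow T$ (so that the stochastic integral below is a true martingale; the membership $(Y,Z)\in\mathcal{S}^p_{\mathbb{F}}\times\mathcal{H}^p_{\mathbb{F}}$ allows the subsequent passage to the limit) and apply the generalized It\^o--Tanaka formula for $|y|^p$, $p>1$, as in Briand et al. \cite[Corollary 2.3]{BD03} --- exactly the tool already used in \eqref{eq 4.47}. Writing $\varrho(y):=|y|^{-1}y\,\mathbf{1}_{\{y\neq0\}}$, this produces, for $1<p<2$ in inequality form, the good term $\tfrac{p(p-1)}{2}\int_r^T|Y_s|^{p-2}\mathbf{1}_{\{Y_s\neq0\}}|Z_s|^2\,ds$ on the left, together with the drift $p\int_r^T|Y_s|^{p-1}\langle\varrho(Y_s),f(s,Y_s,Z_s)\rangle\,ds$ and the martingale $-p\int_r^T|Y_s|^{p-1}\langle\varrho(Y_s),Z_s\,dW_s\rangle$. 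Inserting the hypothesis bounds the drift integrand by $p\varphi(|Y_s|^p)+p\lambda|Y_s|^{p-1}|Z_s|+p|Y_s|^{p-1}g_s$, and Young's inequality in the form $p\lambda|Y_s|^{p-1}|Z_s|\le\tfrac{p(p-1)}{4}|Y_s|^{p-2}\mathbf{1}_{\{Y_s\neq0\}}|Z_s|^2+\tfrac{p\lambda^2}{p-1}|Y_s|^p$ consumes half of the good term.

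Next I would extract two complementary bounds. Taking expectations at $r=t$ (the martingale vanishing) and using concavity of $\varphi$ through Jensen's inequality, $\mathbb{E}[\varphi(|Y_s|^p)]\le\varphi(\mathbb{E}[|Y_s|^p])$, controls $\mathbb{E}\big[\int_t^T|Y_s|^{p-2}\mathbf{1}_{\{Y_s\neq0\}}|Z_s|^2\,ds\big]$ by the data, the term $\int_t^T\mathbb{E}[|Y_s|^p]\,ds$, and $\mathbb{E}\big[\int_t^T|Y_s|^{p-1}g_s\,ds\big]$. Separately, dropping the good term, taking the supremum over $r\in[t,T]$ and then expectations, BDG bounds the martingale contribution by a constant times $\mathbb{E}\big[(\sup_s|Y_s|^p)^{1/2}(\int_t^T|Y_s|^{p-2}\mathbf{1}_{\{Y_s\neq0\}}|Z_s|^2\,ds)^{1/2}\big]$; a Young splitting then lets me absorb a fixed fraction of $\mathbb{E}[\sup_s|Y_s|^p]$ into the left side, while the residual quadratic energy is supplied by the first bound. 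The crossed term is treated by $\mathbb{E}\big[\int_t^T|Y_s|^{p-1}g_s\,ds\big]\le\mathbb{E}\big[(\sup_s|Y_s|^p)^{(p-1)/p}\int_t^T g_s\,ds\big]$ and Young's inequality with conjugate exponents $\tfrac{p}{p-1}$ and $p$, which is where $\mathbb{E}\big[(\int_t^T g_s\,ds)^p\big]$ enters and where a further fraction of $\mathbb{E}[\sup_s|Y_s|^p]$ is absorbed.

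Combining the two bounds gives an inequality of the form $\mathbb{E}[\sup_{r\in[t,T]}|Y_r|^p]\le C\,\Delta(t)+C\int_t^T\mathbb{E}[\sup_{r\in[s,T]}|Y_r|^p]\,ds$, with $\Delta(t):=\mathbb{E}[|\xi|^p]+\int_t^T\varphi(\mathbb{E}[|Y_s|^p])\,ds+\mathbb{E}\big[(\int_t^T g_s\,ds)^p\big]$ nondecreasing as $t$ decreases and all constants depending only on $p,\lambda$; the backward Gronwall inequality then yields the stated exponential bound with rate $C_{p,\lambda}$. I expect the principal difficulty to be the self-referential coupling between the supremum estimate and the $Z$-energy estimate: the BDG term regenerates $\int|Y|^{p-2}\mathbf{1}|Z|^2$, so the Young constants must be calibrated (using only $p$ and $\lambda$) so that every reappearing $\mathbb{E}[\sup|Y|^p]$ and quadratic term is reabsorbed without degeneration, with particular care as $p\downarrow1$, where the factor $\tfrac{1}{p-1}$ appears, and as $Y$ approaches $0$, where the indicator $\mathbf{1}_{\{Y\neq0\}}$ and the generalized formula for $1<p<2$ are essential.
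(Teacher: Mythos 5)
The paper itself contains no proof of this lemma: it is one of the appendix estimates imported from the literature (``one may refer to the Propositions 1--4 in \cite{F18} and Proposition 3.2 in \cite{BD03}''), so your sketch has to be measured against the standard argument of those references, and it does reconstruct exactly that argument. The It\^{o}--Tanaka formula of \cite[Corollary 2.3]{BD03} with the good term $c(p)\int_r^T|Y_s|^{p-2}\mathbf{1}_{\{Y_s\neq0\}}|Z_s|^2ds$, the Young absorption $p\lambda|Y|^{p-1}|Z|\le\frac{c(p)}{2}|Y|^{p-2}\mathbf{1}_{\{Y\neq0\}}|Z|^2+\frac{p^2\lambda^2}{2c(p)}|Y|^p$ (which for $1<p<2$ is your $\frac{p\lambda^2}{p-1}$), Jensen's inequality for the concave $\varphi$, the BDG step with reabsorption of a fraction of $\mathbb{E}[\sup_s|Y_s|^p]$, the Young splitting of $\mathbb{E}\big[\sup_s|Y_s|^{p-1}\int_t^Tg_s\,ds\big]$, and the backward Gronwall step are all correct, and the calibration issue you single out is real but resolvable exactly as you say. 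One minor point: \cite[Corollary 2.3]{BD03} holds for every $p>1$ with $c(p)=\frac{p[(p-1)\wedge1]}{2}$, so there is no need to confine the discussion to $1<p<2$; for $p\ge2$ the same computation runs with $c(p)=p/2$.

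The one genuine discrepancy is your final sentence. Your chain of estimates yields $\mathbb{E}\big[\sup_{r\in[t,T]}|Y_r|^p\big]\le C\Delta(t)+C\int_t^T\mathbb{E}\big[\sup_{r\in[s,T]}|Y_r|^p\big]ds$ with $C=C_{p,\lambda}>1$ (BDG and absorption constants), and Gronwall then gives $C\Delta(t)e^{C(T-t)}$ --- \emph{not} the stated bound $e^{C_{p,\lambda}(T-t)}\Delta(t)$, whose prefactor tends to $1$ as $t\uparrow T$. This is not a defect of your method: the clean form as printed cannot be proved, because it is false. Take $n=d=1$, $f\equiv0$, $\varphi\equiv0$, $g\equiv0$, $p=2$, $\xi=W_T$, so that $Y=W$, $Z\equiv1$. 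With $\delta:=T-t$, a reflection argument gives $\mathbb{E}\big[\sup_{s\in[t,T]}W_s^2\big]\ge\mathbb{E}\big[(|W_t|+S_\delta)^2\big]=T+\frac{4}{\pi}\sqrt{t\delta}$, where $S_\delta$ is distributed as $|W_\delta|$ and independent of $W_t$, while the claimed right-hand side is $e^{C\delta}T=T+O(\delta)$; for small $\delta$ the $\sqrt{\delta}$ term wins. (This is the Doob-maximal-constant obstruction: any estimate of $\mathbb{E}[\sup|Y|^p]$ by $\mathbb{E}[|\xi|^p]$ must carry a multiplicative constant bounded away from $1$ uniformly in $t$.) So the lemma must be read with an extra multiplicative constant $C_p$ in front of the bracket --- which is precisely what your proof delivers, and is all that the paper's applications use (in the proofs of Theorems \ref{th 4.9} and \ref{th 4.10} only the form ``there exists $C>0$ such that $\mathbb{E}[\sup_{s\in[t,T]}|\widehat{Y}_s|^{p_0}]\le C(\cdots)$'' is invoked). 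You should flag this mismatch explicitly rather than asserting that Gronwall yields the stated form verbatim.
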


\begin{mylemma}\label{le 4.7}
Let the generator $f$ satisfy the assumptions of Lemma \ref{le 4.6} with $p=2$. Suppose that $(Y,Z)\in\mathcal{S}_{\mathbb{F}}^{2}(0,T;\mathbb{R}^{n})\times \mathcal{H}_{\mathbb{F}}^{2}(0,T;\mathbb{R}^{n\times d})$ is a solution of BSDE \eqref{eq 1.1}. Then there exists a constant $C_{\lambda}>0$ depending only on $\lambda$ such that, for all $t\in[0,T]$,
\begin{equation*}
\begin{aligned}
\mathbb{E}\big[\sup\limits_{s\in[t,T]}|Y_{s}|^{2}\big]+\mathbb{E}\Big[\int_{t}^{T}|Z_{s}|^{2}ds\Big]\le e^{C_{\lambda}(T-t)}\Big(\mathbb{E}[|\xi|^{2}]+\int_{t}^{T}\varphi(\mathbb{E}[|Y_{s}|^{2}])ds+\mathbb{E}\Big[\Big(\int_{t}^{T}g_{s}ds\Big)^{2}\Big]\Big).
\end{aligned}
\end{equation*}
\end{mylemma}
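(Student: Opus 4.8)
The plan is to prove this as a standard a priori $L^{2}$-estimate, obtained from It\^{o}'s formula applied to $|Y_{t}|^{2}$, combined with Young's inequality, the Burkholder--Davis--Gundy (BDG) inequality, Jensen's inequality (to exploit the concavity of $\varphi$), and a backward Gronwall argument. Since $(Y,Z)\in\mathcal{S}^{2}_{\mathbb{F}}(0,T;\mathbb{R}^{n})\times\mathcal{H}^{2}_{\mathbb{F}}(0,T;\mathbb{R}^{n\times d})$, every integral below is finite and $\int_{0}^{\cdot}\langle Y_{s},Z_{s}dW_{s}\rangle$ is a true martingale, so no localization is needed. Concretely, It\^{o}'s formula gives
\begin{equation*}
|Y_{t}|^{2}+\int_{t}^{T}|Z_{s}|^{2}ds=|\xi|^{2}+2\int_{t}^{T}\langle Y_{s},f(s,Y_{s},Z_{s})\rangle ds-2\int_{t}^{T}\langle Y_{s},Z_{s}dW_{s}\rangle,
\end{equation*}
and the structural hypothesis of Lemma \ref{le 4.6} with $p=2$ bounds the drift by $\langle Y_{s},f(s,Y_{s},Z_{s})\rangle\le\varphi(|Y_{s}|^{2})+\lambda|Y_{s}||Z_{s}|+|Y_{s}|g_{s}$. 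Applying Young's inequality to $2\lambda|Y_{s}||Z_{s}|\le 2\lambda^{2}|Y_{s}|^{2}+\tfrac{1}{2}|Z_{s}|^{2}$ absorbs half of the $Z$-integral on the left, while $2\int_{t}^{T}|Y_{s}|g_{s}ds\le\varepsilon\sup_{s\in[t,T]}|Y_{s}|^{2}+C_{\varepsilon}\big(\int_{t}^{T}g_{s}ds\big)^{2}$ isolates the desired $(\int g)^{2}$ source term.

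Next I would run two coupled estimates from this inequality. Taking the running supremum over $[t,T]$, taking expectations, and estimating the martingale term by BDG in the form $\mathbb{E}\sup_{r\in[t,T]}|\int_{r}^{T}\langle Y_{s},Z_{s}dW_{s}\rangle|\le C\,\mathbb{E}\big[\sup_{s}|Y_{s}|\big(\int_{t}^{T}|Z_{s}|^{2}ds\big)^{1/2}\big]$ followed by Young, yields a bound for $\mathbb{E}[\sup_{[t,T]}|Y|^{2}]$ in which the residual $\sup|Y|^{2}$ terms are absorbed and only a multiple of $\mathbb{E}\int_{t}^{T}|Z|^{2}ds$ survives on the right. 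Taking instead plain expectations (so that the martingale drops out) produces a separate bound for $\mathbb{E}\int_{t}^{T}|Z|^{2}ds$. Adding an appropriate multiple of the latter to the former absorbs the leftover $Z$-integral and gives, after using $\mathbb{E}[\varphi(|Y_{s}|^{2})]\le\varphi(\mathbb{E}[|Y_{s}|^{2}])$ by Jensen's inequality and the concavity of $\varphi$,
\begin{equation*}
\mathbb{E}\Big[\sup_{s\in[t,T]}|Y_{s}|^{2}\Big]+\mathbb{E}\Big[\int_{t}^{T}|Z_{s}|^{2}ds\Big]\le C\Big(\mathbb{E}[|\xi|^{2}]+\int_{t}^{T}\varphi(\mathbb{E}[|Y_{s}|^{2}])ds+\int_{t}^{T}\mathbb{E}[|Y_{s}|^{2}]ds+\mathbb{E}\Big[\Big(\int_{t}^{T}g_{s}ds\Big)^{2}\Big]\Big).
\end{equation*}
Equivalently, one may take the $\sup|Y|^{2}$ bound verbatim from Lemma \ref{le 4.6} with $p=2$ and only derive the complementary $Z$-estimate.

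Finally, setting $u(t):=\mathbb{E}[\sup_{s\in[t,T]}|Y_{s}|^{2}]+\mathbb{E}\int_{t}^{T}|Z_{s}|^{2}ds$ and using $\mathbb{E}[|Y_{s}|^{2}]\le u(s)$, the previous display reads $u(t)\le v(t)+C\int_{t}^{T}u(s)ds$, where the forcing term $v(t):=C(\mathbb{E}[|\xi|^{2}]+\int_{t}^{T}\varphi(\mathbb{E}[|Y_{s}|^{2}])ds+\mathbb{E}[(\int_{t}^{T}g_{s}ds)^{2}])$ is nonincreasing in $t$ and its concave contribution is kept intact as a source (it is never linearized). The backward Gronwall inequality then gives $u(t)\le v(t)e^{C(T-t)}$, which is the claimed bound after collecting constants into $C_{\lambda}$. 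The step I expect to be the main obstacle is the simultaneous control of $\sup|Y|^{2}$ and $\int|Z|^{2}$: these are coupled through the BDG estimate, so the Young constants used to absorb both the $|Y||Z|$ cross term and the martingale term must be chosen in the right order and size, ensuring that no uncontrolled multiple of either $\sup|Y|^{2}$ or $\int|Z|^{2}$ remains on the right-hand side before the Gronwall step.
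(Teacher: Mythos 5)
Your strategy---It\^{o}'s formula for $|Y|^{2}$, the $p=2$ structure condition $\langle y,f(t,y,z)\rangle\le\varphi(|y|^{2})+\lambda|y||z|+|y|g_{t}$, Young's inequality to absorb $\tfrac{1}{2}\int|Z|^{2}$, a plain-expectation estimate for the $Z$-integral coupled with a BDG estimate for the running supremum, Jensen's inequality for the concave $\varphi$, and a backward Gronwall step that keeps the concave source intact---is the standard argument for this kind of a priori estimate. Note that the paper itself gives no proof of Lemma \ref{le 4.7}: the appendix lemmas are quoted from \cite{F18} (Propositions 1--4) and \cite{BD03} (Proposition 3.2), and your proof is in line with how such estimates are established there. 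Your justification that no localization is needed is also correct: since $(Y,Z)\in\mathcal{S}^{2}_{\mathbb{F}}\times\mathcal{H}^{2}_{\mathbb{F}}$, one has $\mathbb{E}\big[\sup_{s}|Y_{s}|\big(\int_{t}^{T}|Z_{s}|^{2}ds\big)^{1/2}\big]<+\infty$, so the stochastic integral is a uniformly integrable martingale; and your backward Gronwall step with nonincreasing forcing $v$ does yield $u(t)\le v(t)e^{C(T-t)}$.

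The one step that does not go through as written is the final sentence, ``which is the claimed bound after collecting constants into $C_{\lambda}$.'' Your coupled estimates necessarily produce a bound of the form $C\,e^{C(T-t)}(\cdots)$ with a multiplicative constant $C>1$ coming from the Young and BDG absorptions, and such a factor cannot be collected into $e^{C_{\lambda}(T-t)}$ uniformly in $t$, because the exponential tends to $1$ as $t\uparrow T$ while $C$ does not. In fact the pure-exponential prefactor in the statement is not attainable for the $\mathbb{E}[\sup]$ term at all: take $n=d=1$, $f\equiv0$ (the hypothesis holds with $\varphi\equiv0$, $g\equiv0$, any $\lambda>0$) and $\xi=W_{T}$, so that $Y=W$, $Z\equiv1$; a direct computation (splitting on $\{W_{t}\ge0\}$ and using the independent increment $\sup_{s\in[t,T]}(W_{s}-W_{t})$) gives $\mathbb{E}\big[\sup_{s\in[t,T]}|W_{s}|^{2}\big]+(T-t)\ge T+\frac{4}{\pi}\sqrt{t(T-t)}$, which exceeds $e^{C_{\lambda}(T-t)}\,\mathbb{E}[|\xi|^{2}]=e^{C_{\lambda}(T-t)}T\approx T+C_{\lambda}T(T-t)$ for $t$ close to $T$, whatever the fixed constant $C_{\lambda}$. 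So what your argument proves---and what can be proved---is the estimate with an additional multiplicative constant in front of the bracket. This discrepancy is harmless for the paper: in the only place Lemma \ref{le 4.7} is invoked, namely the derivation of \eqref{eq 4.19} in Theorem \ref{th 4.10}, only the short-interval smallness of the $(T-t)$-weighted terms matters, and an extra fixed constant merely changes the choice of $\varepsilon$ in \eqref{eq 4.20}. But you should state your conclusion honestly with the multiplicative constant (equivalently, with prefactor $C_{\lambda}e^{C_{\lambda}(T-t)}$), rather than claim the literal form of the lemma.
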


\begin{mylemma}\label{le 4.8}
Let $p>1$. Suppose that the generator $f:\Omega\times[0,T]\times\mathbb{R}^{n}\times\mathbb{R}^{n\times d}\rightarrow\mathbb{R}^{n}$ is $\mathbb{F}$-progressively measurable and there exists a constant $\lambda>0$ together with a nonnegative process $g=(g_{t})_{t\in[0,T]}\in\mathcal{L}^{p}_{\mathbb{F}}(0,T;\mathbb{R})$, such that, $d\mathbb{P}\times dt\text{-}a.e.$, for all $t\in[0,T],\ y\in\mathbb{R}^{n}$ and $z\in\mathbb{R}^{n\times d}$,
\begin{equation*}
\begin{aligned}
\Big\langle\frac{y}{|y|}\mathbf{1}_{\{|y|\neq0\}},f(t,y,z)\Big\rangle\le \varpi^{\frac{1}{p}}(|y|^{p})+\lambda|z|+g_{t},
\end{aligned}
\end{equation*}
where $\varpi(\cdot):[0,+\infty)\rightarrow[0,+\infty)$ is a nondecreasing and concave function with $\varpi(0)=0$.

Let $(Y,Z)$ be a solution of BSDE \eqref{eq 1.1} such that $Y\in\mathcal{S}^{p}_{\mathbb{F}}(0,T;\mathbb{R}^{n})$. Then $Z\in \mathcal{H}^{p}_{\mathbb{F}}(0,T;\mathbb{R}^{n\times d})$ and there exists a constant $C_{p,\lambda}>0$ depending only on $p,\lambda$ such that, for all $t\in[0,T]$,
\begin{equation*}
\begin{aligned}
\mathbb{E}\big[\sup\limits_{s\in[t,T]}|Y_{s}|^{p}\big]+\mathbb{E}\Big[\Big(\int_{t}^{T}|Z_{s}|^{2}ds\Big)^{\frac{p}{2}}\Big]\le e^{C_{p,\lambda}(T-t)}\Big(\mathbb{E}[|\xi|^{p}]+\int_{t}^{T}\varpi(\mathbb{E}[|Y_{s}|^{p}])ds+\mathbb{E}\Big[\Big(\int_{t}^{T}g_{s}ds\Big)^{p}\Big]\Big).
\end{aligned}
\end{equation*}
\end{mylemma}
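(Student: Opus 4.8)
The plan is to split the desired estimate into its $\mathcal{H}^p$-part and its $\mathcal{S}^p$-part, prove each, and then combine. First I would establish that $Z\in\mathcal{H}^p_{\mathbb{F}}(0,T;\mathbb{R}^{n\times d})$ by reducing the hypothesis to the form required by Lemma \ref{le 4.5}. Since $\varpi$ is nondecreasing, concave and vanishes at $0$, it has at most linear growth, say $\varpi(x)\le R(x+1)$, so that $\varpi^{1/p}(|y|^p)\le R^{1/p}(|y|^p+1)^{1/p}\le R^{1/p}(|y|+1)$. Multiplying the one-sided bound on $\langle\varrho(y),f(t,y,z)\rangle$ by $|y|$ then yields $\langle y,f(t,y,z)\rangle\le R^{1/p}|y|^2+\lambda|y||z|+|y|(g_t+R^{1/p})$, which is exactly the structural inequality of Lemma \ref{le 4.5} with $\lambda_1=R^{1/p}$, $\lambda_2=\lambda$, driving process $g_t+R^{1/p}\in\mathcal{L}^p_{\mathbb{F}}(0,T;\mathbb{R})$ and $\theta\equiv0$. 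Lemma \ref{le 4.5} then gives $Z\in\mathcal{H}^p_{\mathbb{F}}$ together with a bound of $\mathbb{E}[(\int_t^T|Z_s|^2ds)^{p/2}]$ by $C_{p,\lambda,T}\,\mathbb{E}[\sup_{s\in[t,T]}|Y_s|^p]$ plus a term controlled by $\mathbb{E}[(\int_t^T g_s\,ds)^p]$.

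Next I would produce the $\mathcal{S}^p$-estimate. Applying the generalized It\^o formula (Briand et al. \cite[Corollary 2.3]{BD03}) to $|Y_t|^p$ with $p>1$ gives an identity in which $|Y_t|^p+\tfrac{p(p-1)}{2}\int_t^T|Y_s|^{p-2}\mathbf{1}_{\{Y_s\neq0\}}|Z_s|^2ds$ equals $|\xi|^p$ plus $p\int_t^T|Y_s|^{p-1}\langle\varrho(Y_s),f(s,Y_s,Z_s)\rangle ds$ minus the stochastic integral $p\int_t^T|Y_s|^{p-1}\langle\varrho(Y_s),Z_s\,dW_s\rangle$. Inserting the hypothesis, the drift is dominated by $p|Y_s|^{p-1}\varpi^{1/p}(|Y_s|^p)+p\lambda|Y_s|^{p-1}|Z_s|+p|Y_s|^{p-1}g_s$. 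I would treat the cross term by Young's inequality, $p\lambda|Y_s|^{p-1}|Z_s|\le\tfrac{p(p-1)}{4}|Y_s|^{p-2}\mathbf{1}_{\{Y_s\neq0\}}|Z_s|^2+\tfrac{p\lambda^2}{p-1}|Y_s|^p$, so that the quadratic term is absorbed into the left-hand bracket, and the first term again by Young, $p|Y_s|^{p-1}\varpi^{1/p}(|Y_s|^p)\le(p-1)|Y_s|^p+\varpi(|Y_s|^p)$.

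Then I would take the supremum over $[t,T]$ and expectations, after localizing by $\tau_n:=\inf\{s\ge t:\int_t^s|Z_r|^2dr\ge n\}\wedge T$ to turn the stochastic integral into a true martingale and passing to the limit via $Y\in\mathcal{S}^p_{\mathbb{F}}$, $Z\in\mathcal{H}^p_{\mathbb{F}}$ and dominated convergence. The Burkholder--Davis--Gundy inequality bounds $\mathbb{E}[\sup_{t\in[0,T]}|\int_t^T|Y_s|^{p-1}\langle\varrho(Y_s),Z_s\,dW_s\rangle|]$ by $c\,\mathbb{E}[(\int_0^T|Y_s|^{2p-2}|Z_s|^2ds)^{1/2}]\le c\,\mathbb{E}[(\sup_{[0,T]}|Y_s|^p)^{1/2}(\int_0^T|Y_s|^{p-2}\mathbf{1}|Z_s|^2ds)^{1/2}]$, and a further Young split lets $\tfrac12\mathbb{E}[\sup|Y|^p]$ be absorbed on the left while the residual $\int|Y|^{p-2}|Z|^2$ is already controlled by the bracket. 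Concavity of $\varpi$ and Jensen's inequality give $\mathbb{E}[\varpi(|Y_s|^p)]\le\varpi(\mathbb{E}[|Y_s|^p])$, while writing the $g$-integral as $(\sup_{[t,T]}|Y|)^{p-1}\int_t^T g_s\,ds$ and Young produces $\mathbb{E}[(\int_t^T g_s\,ds)^p]$ after absorbing one more copy of $\tfrac12\mathbb{E}[\sup|Y|^p]$. Setting $u(t):=\mathbb{E}[\sup_{s\in[t,T]}|Y_s|^p]$, I arrive at $u(t)\le a(t)+C_{p,\lambda}\int_t^T u(s)ds$ with $a(t):=\mathbb{E}[|\xi|^p]+\int_t^T\varpi(\mathbb{E}[|Y_s|^p])ds+C\,\mathbb{E}[(\int_t^T g_s\,ds)^p]$; since $a$ is nonincreasing in $t$, backward Gronwall's inequality yields $u(t)\le a(t)e^{C_{p,\lambda}(T-t)}$, which is exactly the $\mathcal{S}^p$-part. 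Substituting this $\mathcal{S}^p$-bound into the $\mathcal{H}^p$-bound obtained in the first paragraph and adding the two inequalities gives the combined estimate after relabelling constants.

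The main delicate point is the simultaneous absorption in the third step: one must check that the single bracket coefficient $\tfrac{p(p-1)}{2}$ is large enough to swallow both the Young cross-term $\tfrac{p(p-1)}{4}|Y|^{p-2}|Z|^2$ and the BDG-generated term, which forces careful bookkeeping of the Young constants; the accompanying localization argument (ensuring the stopped stochastic integral has zero expectation and that the passage $n\to\infty$ is legitimate) must also be handled before taking expectations. Everything else reduces to the already-established Lemmas \ref{le 4.5} and \ref{le 4.6} and to routine inequalities.
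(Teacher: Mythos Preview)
The paper does not prove this lemma in its own text: it is listed in the appendix among several a priori estimates quoted from \cite{F18} (Propositions~1--4) and \cite{BD03} (Proposition~3.2), so there is no in-paper argument to compare against. Your outline follows the standard route of those references---It\^o on $|Y|^p$ together with Young, Jensen on the concave $\varpi$, BDG, and backward Gronwall for the $\mathcal S^p$-part, and a separate estimate for the $\mathcal H^p$-part---and is essentially correct.

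One step, however, does not deliver the stated constant dependence. You obtain the quantitative $\mathcal H^p$-bound by feeding the linearization $\varpi^{1/p}(|y|^p)\le R^{1/p}(|y|+1)$ into Lemma~\ref{le 4.5}; but then $\lambda_1=R^{1/p}$ and the auxiliary process $g_t+R^{1/p}$ both carry the linear-growth constant $R$ of $\varpi$, so the resulting prefactor is $C_{p,R^{1/p},\lambda,T}$ and you also acquire an additive term of size $R(T-t)^p$ that cannot be absorbed into any of the three summands on the right of the claimed inequality. The lemma asserts that the exponent $C_{p,\lambda}$ depends only on $p,\lambda$, and there is no free constant on the right. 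Using Lemma~\ref{le 4.5} only for the qualitative conclusion $Z\in\mathcal H^p$ is fine; for the quantitative part, apply It\^o to $|Y|^2$ directly and keep $2\int_t^T|Y_s|\varpi^{1/p}(|Y_s|^p)\,ds$ explicit: raising to the power $p/2$ and using Young together with H\"older gives
\[
\Big(\int_t^T|Y_s|\,\varpi^{1/p}(|Y_s|^p)\,ds\Big)^{p/2}\le\tfrac12\sup_{s\in[t,T]}|Y_s|^p+\tfrac12\,(T-t)^{p-1}\!\int_t^T\varpi(|Y_s|^p)\,ds,
\]
after which Jensen's inequality on the concave $\varpi$ produces $\int_t^T\varpi(\mathbb E[|Y_s|^p])\,ds$, and every polynomial-in-$(T-t)$ prefactor is swallowed by $e^{C_{p,\lambda}(T-t)}$ via $x^{k}\le k!\,e^{x}$. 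With this adjustment your argument yields the statement with the correct dependence.
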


\begin{mylemma}\label{le new}
Let $p>1$. Suppose that the generator $f:\Omega\times[0,T]\times\mathbb{R}^{n}\times\mathbb{R}^{n\times d}\rightarrow\mathbb{R}^{n}$ is $\mathbb{F}$-progressively measurable and there exist constants $\lambda_{1}>0,\ \lambda_{2}>0$, and a nonnegative process $g=(g_{t})_{t\in[0,T]}\in\mathcal{L}^{p}_{\mathbb{F}}(0,T;\mathbb{R})$, such that, $d\mathbb{P}\times dt\text{-}a.e.$, for all $t\in[0,T],\ y\in\mathbb{R}^{n}$ and $z\in\mathbb{R}^{n\times d}$,
\begin{equation*}
\begin{aligned}
\langle \frac{y}{|y|}\mathbf{1}_{\{|y|\neq0\}},f(t,y,z)\rangle \le \lambda_{1}|y|+\lambda_{2}|z|+g_{t}.
\end{aligned}
\end{equation*}

\noindent Let $(Y,Z)$ be a solution of BSDE \eqref{eq 1.1} such that $Y\in\mathcal{S}^{p}_{\mathbb{F}}(0,T;\mathbb{R}^{n})$. Then $Z\in \mathcal{H}^{p}_{\mathbb{F}}(0,T;\mathbb{R}^{n\times d})$ and there exists a constant $C_{p}>0$ depending only on $p$, such that, for all $\beta\ge \lambda_{1}+\frac{\lambda_{2}^{2}}{1\wedge(p-1)}$ and $t\in[0,T]$,
\begin{equation*}
\begin{aligned}
\mathbb{E}\Big[\sup\limits_{s\in[t,T]}e^{\beta p(s-t)}|Y_{s}|^{p}+\Big(\int_{t}^{T}e^{2\beta (s-t)}|Z_{s}|^{2}ds\Big)^{\frac{p}{2}}\Big]\le C_{p}\mathbb{E}\Big[e^{\beta p(T-t)}|\xi|^{p}+\Big(\int_{t}^{T}e^{\beta (s-t)}g_{s}ds\Big)^{p}\Big].
\end{aligned}
\end{equation*}
\end{mylemma}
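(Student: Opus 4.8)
The plan is to derive both bounds through an exponential change of variables followed by the generalized It\^o formula of \cite[Corollary 2.3]{BD03}, applied once to the $p$-th power of the weighted state and once to its square. First I would set $\mathcal{Y}_s:=e^{\beta(s-t)}Y_s$ and $\mathcal{Z}_s:=e^{\beta(s-t)}Z_s$ on $[t,T]$, so that $(\mathcal{Y},\mathcal{Z})$ solves the BSDE with driver $e^{\beta(s-t)}f(s,Y_s,Z_s)-\beta\mathcal{Y}_s$ and terminal value $e^{\beta(T-t)}\xi$; the role of the weight is that the extra $-\beta\mathcal{Y}_s$ drift will dominate the linear-in-$y$ and linear-in-$z$ terms coming from the structural hypothesis. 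Since the solution is only known to satisfy $\int_0^T|Z_s|^2ds<\infty$ a.s., I would localize by $\tau_n:=\inf\{s\in[0,T]:\int_0^s|Z_r|^2dr\ge n\}\wedge T$, carry out all computations on $[t\wedge\tau_n,\tau_n]$ so the stochastic integrals are genuine martingales, obtain bounds uniform in $n$, and pass to the limit by Fatou's lemma.

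The core computation is as follows. Applying \cite[Corollary 2.3]{BD03} to $|\mathcal{Y}_s|^p$ and using that the direction $\varrho(\mathcal{Y}_s)=\varrho(Y_s)$ is scale invariant, the hypothesis gives $\langle\varrho(\mathcal{Y}_s),e^{\beta(s-t)}f(s,Y_s,Z_s)\rangle\le\lambda_1|\mathcal{Y}_s|+\lambda_2|\mathcal{Z}_s|+e^{\beta(s-t)}g_s$, while the extra drift contributes $-\beta|\mathcal{Y}_s|$. A Young inequality of the form $p\lambda_2|\mathcal{Y}_s|^{p-1}|\mathcal{Z}_s|\le\frac{p(p-1)}{4}|\mathcal{Y}_s|^{p-2}\mathbf{1}_{\{\mathcal{Y}_s\neq0\}}|\mathcal{Z}_s|^2+\frac{p\lambda_2^2}{p-1}|\mathcal{Y}_s|^p$ then absorbs half of the quadratic term produced by the It\^o formula, and the choice $\beta\ge\lambda_1+\frac{\lambda_2^2}{1\wedge(p-1)}$ renders the coefficient of $\int|\mathcal{Y}_s|^p\,ds$ nonpositive, so that term drops. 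Taking expectations first controls $\mathbb{E}[\int_t^T|\mathcal{Y}_s|^{p-2}\mathbf{1}_{\{\mathcal{Y}_s\neq0\}}|\mathcal{Z}_s|^2ds]$; then, taking the supremum and applying Burkholder--Davis--Gundy to the martingale term, whose bracket is bounded by $\sup_s|\mathcal{Y}_s|^p\cdot\int_t^T|\mathcal{Y}_s|^{p-2}\mathbf{1}_{\{\mathcal{Y}_s\neq0\}}|\mathcal{Z}_s|^2ds$, together with one further Young inequality, I absorb $\mathbb{E}[\sup_s|\mathcal{Y}_s|^p]$ into the left-hand side and arrive at the $\mathcal{S}^p$-bound for $\mathcal{Y}$.

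For the $\mathcal{H}^p$-bound I would run the parallel, genuinely $C^2$, argument on $|\mathcal{Y}_s|^2$: It\^o's formula puts $\int_t^T|\mathcal{Z}_s|^2ds$ on the left, the cross term $2\lambda_2|\mathcal{Y}_s||\mathcal{Z}_s|$ is split by Young as $2\lambda_2^2|\mathcal{Y}_s|^2+\frac12|\mathcal{Z}_s|^2$, and now $\beta\ge\lambda_1+\lambda_2^2$ (again covered by the stated threshold) kills the $|\mathcal{Y}_s|^2$ drift. Raising the resulting inequality to the power $p/2$, taking expectations, invoking Burkholder--Davis--Gundy on the new martingale and Young once more to absorb both $\mathbb{E}[\sup_s|\mathcal{Y}_s|^p]$ (already controlled) and the $g$-term, I obtain $\mathbb{E}[(\int_t^T|\mathcal{Z}_s|^2ds)^{p/2}]$ and hence $Z\in\mathcal{H}^p_{\mathbb{F}}(0,T;\mathbb{R}^{n\times d})$. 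Combining the two bounds and undoing the weight yields the claim. The two-case origin of the constant $1\wedge(p-1)$ is exactly this split: the power-$p$ step requires $\beta\ge\lambda_1+\lambda_2^2/(p-1)$ (binding for $1<p<2$), whereas the power-$2$ step requires $\beta\ge\lambda_1+\lambda_2^2$ (binding for $p\ge2$), and the larger of the two thresholds is $\lambda_1+\lambda_2^2/(1\wedge(p-1))$.

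I expect the main obstacle to be the simultaneous closure of the two estimates when $1<p<2$, where $x\mapsto|x|^p$ fails to be $C^2$ at the origin, so the It\^o formula must be used in the sign/indicator form of \cite[Corollary 2.3]{BD03}, and the Burkholder--Davis--Gundy-plus-Young absorption must be balanced so the final constants remain finite and depend only on $p$. Verifying the uniform integrability of the stochastic integrals before letting $n\to\infty$, and checking that the absorption constants are compatible with the single threshold $\beta\ge\lambda_1+\lambda_2^2/(1\wedge(p-1))$, are the delicate bookkeeping points.
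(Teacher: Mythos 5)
The paper does not actually prove Lemma \ref{le new}: it is quoted in the appendix with a pointer to \cite[Proposition 3.2]{BD03} and Propositions 1--4 of \cite{F18}, and your blind reconstruction is essentially the argument behind that cited result — exponential reweighting $\mathcal{Y}_s=e^{\beta(s-t)}Y_s$, localization, the Meyer--It\^o formula of \cite[Corollary 2.3]{BD03} for $|\cdot|^p$, a Young absorption calibrated against $\beta$, Burkholder--Davis--Gundy, and a separate power-two pass for the $\mathcal{H}^p$-bound. So the architecture is right, including the two points you flag as delicate (the indicator form of the formula for $1<p<2$, and the fact that the absorptions are not circular because everything is carried out on $[t\wedge\tau_n,\tau_n]$, where all quantities are finite, with Fatou/monotone convergence at the end).

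There is, however, one displayed step that fails as written, and it is exactly where the constant lives. The correction term supplied by \cite[Corollary 2.3]{BD03} carries the coefficient $c(p)=\frac{p[1\wedge(p-1)]}{2}$; in particular, for $p\ge 2$ the quadratic contribution is only bounded \emph{below} by $\frac{p}{2}|\mathcal{Y}_s|^{p-2}|\mathcal{Z}_s|^2$ (the extra term $\frac{p(p-2)}{2}|\mathcal{Y}_s|^{p-4}|\mathcal{Z}_s^{\mathsf{T}}\mathcal{Y}_s|^2$ from the genuine $C^{2}$ computation cannot be exploited pointwise from below). Your split
\begin{equation*}
p\lambda_2|\mathcal{Y}_s|^{p-1}|\mathcal{Z}_s|\le\frac{p(p-1)}{4}|\mathcal{Y}_s|^{p-2}\mathbf{1}_{\{\mathcal{Y}_s\neq0\}}|\mathcal{Z}_s|^2+\frac{p\lambda_2^2}{p-1}|\mathcal{Y}_s|^{p}
\end{equation*}
therefore overdraws the available correction term whenever $\frac{p(p-1)}{4}>\frac{p}{2}$, i.e.\ for $p>3$, and the penalty $\frac{p\lambda_2^{2}}{p-1}$ it promises is unattainable there. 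The correct calibration is $\delta=c(p)$ in $xy\le\frac{x^{2}}{2\delta}+\frac{\delta}{2}y^{2}$, which absorbs $\frac{c(p)}{2}|\mathcal{Y}_s|^{p-2}\mathbf{1}_{\{\mathcal{Y}_s\neq0\}}|\mathcal{Z}_s|^{2}$ (half of what the formula provides, leaving half for the BDG step) at the price $\frac{p\lambda_2^{2}}{1\wedge(p-1)}|\mathcal{Y}_s|^{p}$. With this fix the power-$p$ pass alone forces exactly $\beta\ge\lambda_1+\frac{\lambda_2^{2}}{1\wedge(p-1)}$ for every $p>1$: the threshold originates in the Meyer--It\^o constant $1\wedge(p-1)$, not in a maximum of two separate requirements. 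Your ``two-case'' explanation lands on the right number only because the power-two pass happens to require $\beta\ge\lambda_1+\lambda_2^{2}$, which coincides with the corrected power-$p$ requirement when $p\ge2$ and is dominated by it when $1<p<2$; that pass itself, and the rest of your bookkeeping, are correct.
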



\begin{thebibliography}{99}

\bibitem{B73} J.-M. Bismut. Conjugate convex functions in optimal stochastic control. \emph{Journal of Mathematical Analysis and Applications}. 44 (2), 384-404, 1973.

\bibitem{BD03} P. Briand, B. Delyon, Y. Hu, E. Pardoux and L. Stoica. $L^{p}$ solutions of backward stochastic differential equations. \emph{Stochastic Processes and their Applications}. 108 (1),  109-129, 2003.

\bibitem{BR07} P. Briand, J.-P. Relepeltier, J. San Martin. One-dimensional backward stochastic differential equations whose coefficient is monotonic in $y$ and non-Lipschitz in $z$. \emph{Bernoulli}. 13 (1), 80-91, 2007.

\bibitem{BC08} P. Briand, F. Confortola. BSDEs with stochastic Lipschitz condition and quadratic PDEs in Hilbert spaces. \emph{Stochastic Processes and their Applications}. 118 (5), 818-838, 2008.

\bibitem{BH06} P. Briand, Y. Hu. BSDE with quadratic growth and unbounded terminal value. \emph{Probability Theory and Related Fields}. 136 (4), 604-618, 2006.

\bibitem{BD09} R. Buckdahn, B. Djehiche, J. Li, S. Peng. Mean-field backward stochastic differential equations: A limit approach. \emph{The Annals of Probability}. 37 (4), 1524-1565, 2009.

\bibitem{BL09} R. Buckdahn, J. Li, S. Peng. Mean-field backward stochastic differential equations and related partial differential equations. \emph{Stochastic Processes and their Applications}. 119 (10), 3133-3154, 2009.

\bibitem{BL17} R. Buckdahn, J. Li, S. Peng, C. Rainer. Mean-field stochastic differential equations and associated PDEs. \emph{The Annals of Probability}. 45 (2), 824-878, 2017.

\bibitem{CX20} Y. Chen, C. Xing, X. Zhang. $L^{p}$ solution of general mean-field BSDEs with continuous coefficients. \emph{Acta Mathematica Scientia}. 40 (4), 1116-1140, 2020.

\bibitem{DH11} F. Delbaen, Y. Hu, X. Bao. Backward SDEs with superquadratic growth. \emph{Probability Theory and Related Fields}. 150 (24), 145-192, 2011.

\bibitem{EP97} N. El Karoui, S. Peng, M. C. Quenez. Backward stochastic differential equations in finance. \emph{Mathematical Finance}. 7 (1), 1-71, 1997.

\bibitem{EH03} N. El Karoui, S. Hamad\`{e}ne. BSDEs and risk-sensitive control, zero-sum and nonzero-sum game problems of stochastic functional differential equations. \emph{Stochastic Processes and their Applications}. 107 (1), 145-169, 2003.

\bibitem{F15} S. Fan. $L^{p}$ solutions of multidimensional BSDEs with weak monotonicity and general growth generators. \emph{Journal of Mathematical Analysis and Applications}. 432 (1), 156-178, 2015.

\bibitem{F16} S. Fan. Bounded solutions, $L^{p}$ ($p>1$) solutions and $L^{1}$ solutions for one dimensional BSDEs under general assumptions. \emph{Stochastic Processes and their Applications}. 126 (5), 1511-1552, 2016.

\bibitem{F18} S. Fan. Existence, uniqueness and stability of $L^{1}$ solutions for multidimensional backward stochastic differential equations with generators of one-sided Osgood type. \emph{Journal of Theoretical Probability}. 31 (3), 1860-1899, 2018.

\bibitem{F13} S. Fan, L. Jiang. Multidimensional BSDEs with weak monotonicity and general growth generators. \emph{Acta Mathematica Sinica, English Series}. 29 (10), 1885-1906, 2013.

%\bibitem{F23-1}S. Fan S, Y. Hu, S. Tang. Existence, uniqueness and comparison theorem on unbounded solutions of scalar super-linear BSDEs. \emph{Stochastic Processes and their Applications}. 157,  335-375, 2023.

\bibitem{FH23} S. Fan, Y. Hu, S. Tang. $L^{1}$ solution to scalar BSDEs with logarithmic sub-linear growth generators. \emph{Systems $\&$ Control Letters}. 177, 105553, 2023.

\bibitem{FH24} S. Fan, Y. Hu, S. Tang. Scalar BSDEs of iterated-logarithmically sub-linear generators with integrable terminal values. \emph{Systems $\&$ Control Letters}. 188, 105805, 2024.

\bibitem{HH22} T. Hao, Y. Hu, S. Tang, J. Wen. Mean-field backward stochastic differential equations and nonlocal PDEs with quadratic growth. \emph{The Annals of Applied Probability}. 35 (3), 2128-2174, 2025.

\bibitem{HM06} M. Huang, R. P. Malham\'{e} and P. E. Caines. Large population stochastic dynamic games: closed-loop McKean-Vlasov systems and the Nash certainty equivalence principle. \emph{Communications in Information and Systems}. 6 (3), 221-252, 2006.

\bibitem{K56} M. Kac. Foundations of kinetic theory. \emph{Proceedings of the Third Berkeley Symposium on Mathematical Statistics and Probability}. 3, 171-197, 1956.

\bibitem{K00} M. Kobylanski. Backward stochastic differential equations and partial differential equations with quadratic growth. \emph{The Annals of Probability}. 28 (2), 558-602, 2000.

\bibitem{LL07} J.-M. Lasry, P.-L. Lions. Mean field games. \emph{Japanese Journal of Mathematics}. 2 (1), 229-260, 2007.

\bibitem{LS97} J. P. Lepeltier, J. San Martin. Backward stochastic differential equations with continuous coefficient. \emph{Statistics \emph{\&} Probability Letters}. 32 (4), 425-430, 1997.

\bibitem{LS98} J. P. Lepeltier, J. San Martin. Existence for BSDE with superlinear-quadratic coefficient. \emph{Stochastics: An International Journal of Probability and Stochastic Processes}. 63(3-4), 227-240, 1998.

\bibitem{L18} J. Li. Mean-field forward and backward SDEs with jumps and associated nonlocal quasi-linear integral-PDEs. \emph{Stochastic Processes and their Applications}. 128 (9), 3118-3180, 2018.

\bibitem{LL18} J. Li, H. Liang, X. Zhang. General mean-field BSDEs with continuous coefficients. \emph{Journal of Mathematical Analysis and Applications}. 466 (1), 264-280, 2018.

\bibitem{LX22} J. Li, C. Xing. General mean-field BDSDEs with continuous coefficients. \emph{Journal of Mathematical Analysis and Applications}. 506 (2), 125699, 2022.

\bibitem{LF18} H. Luo, S. Fan. Bounded solutions for general time interval BSDEs with quadratic growth coefficients and stochastic conditions. \emph{Stochastics and Dynamics}. 18 (5), 1850034, 2018.

\bibitem{M95} X. Mao. Adapted solutions of backward stochastic differential equations with non-Lipschitz coefficients. \emph{Stochastic Processes and their Applications}. 58 (2), 281-292, 1995.

\bibitem{PP90} E. Pardoux, S. Peng. Adapted solution of a backward stochastic differential equation. \emph{Systems \emph{\&} Control Letters}. 14 (1), 55-61, 1990.

\bibitem{PP92} E. Pardoux, S. Peng. Backward stochastic differential equations and quasilinear parabolic partial differential equations. \emph{Stochastic Partial Differential Equations and Their Applications}. Springer, Berlin, Heidelberg, 200-217, 1992.

\bibitem{P92} S. Peng. A generalized dynamic programming principle and Hamilton-Jacobi-Bellman equation. \emph{Stochastics: An International Journal of Probability and Stochastic Processes}. 38 (2), 119-134, 1992.

\bibitem{P97}S. Peng. Backward SDE and related $g$-expectation. Pitman Research Notes in Mathematics Series. 364, 141-159, 1997.

\end{thebibliography}
\end{document}